\documentclass[11pt,reqno]{article}

\usepackage[margin=1.15in]{geometry}
\usepackage{amsmath,amssymb,amsthm,mathtools}
\usepackage{xcolor}
\usepackage[colorlinks=true,linkcolor=blue!50!black,citecolor=blue!50!black]{hyperref}
\usepackage{microtype}

\theoremstyle{plain}
\newtheorem{theorem}{Theorem}[section]
\newtheorem{lemma}[theorem]{Lemma}
\newtheorem{proposition}[theorem]{Proposition}

\theoremstyle{definition}
\newtheorem{definition}[theorem]{Definition}
\theoremstyle{remark}
\newtheorem{remark}[theorem]{Remark}

\newcommand{\T}{\mathbb{T}}
\newcommand{\R}{\mathbb{R}}
\newcommand{\Z}{\mathbb{Z}}
\newcommand{\E}{\mathbb{E}}
\newcommand{\Var}{\operatorname{Var}}
\newcommand{\Cov}{\operatorname{Cov}}
\newcommand{\dd}{\,\mathrm{d}}
\newcommand{\weak}{\Longrightarrow}
\DeclareMathOperator{\supp}{supp}

\newcommand{\norm}[1]{\left\lVert #1\right\rVert}
\newcommand{\abs}[1]{\left\lvert #1\right\rvert}
\newcommand{\ip}[2]{\langle #1,#2\rangle}
\newcommand{\1}{\mathbf{1}}
\newcommand{\df}{\mathfrak{d}}

\begin{document}

\title{Fluctuations of the quadratic matching cost on the flat torus:\\ dimensions two, three and four}
\author{Shi Feng\thanks{Cornell University.
Email: \texttt{sf599@cornell.edu}} \and Gilles Mordant\thanks{Yale University. Email: \texttt{gilles.mordant@yale.edu}}}
\date{\today}
\maketitle

\begin{abstract}
We establish sharp variance asymptotics and limiting distributions for
the quadratic optimal matching cost between the uniform measure and an empirical measure counterpart  on the flat torus $\T^d$ in
dimensions $d=2,3,4$. More precisely, let $\mu$ be Haar probability measure and
$\mu_n$ the empirical measure of $n$ independent samples drawn from $\mu$.
Then, for $d=2,3$, 
\begin{align*}
    n\bigl(W_2^2(\mu_n,\mu)-\E[W_2^2(\mu_n,\mu)]\bigr)
    &\weak L_d,\\
    n^2\Var(W_2^2(\mu_n,\mu))
    &\longrightarrow
    \frac{1}{8\pi^4}
    \sum_{k\in\Z^d\setminus\{0\}}\frac{1}{\abs{k}^4},
\end{align*}
where $L_d$ is an explicit non-Gaussian weighted sum of independent
centered exponential random variables. In dimension four,
\begin{align*}
    \frac{n}{\sqrt{\log n}}
    \bigl(W_2^2(\mu_n,\mu)-\E[W_2^2(\mu_n,\mu)]\bigr)
    &\weak \mathcal N\left(0,\frac{1}{16\pi^2}\right).%,\\
   % \frac{n^2}{\log n}\Var(W_2^2(\mu_n,\mu))
 %   &\longrightarrow\frac{1}{16\pi^2}.
\end{align*}
The main idea of the proof is to rely on a Hoeffding decomposition of the optimal transport cost, which turns out to be asymptotically equivalent to U-statistics of order 2.
\end{abstract}

\tableofcontents

%------------------------------------------------
%                 Section Introduction
%------------------------------------------------
\section{Introduction}
\label{sec:setting}

The theory of optimal transport (OT) connects probability, analysis, and geometry
\cite{Vil09}.  In recent years, it further has become an important tool in machine learning and data
science \cite{PC19}. An important fact is that Wasserstein distances (rooted in the OT theory) provide metrics on
spaces of probability measures with suitable finite moments, while taking
the geometry of the sample space into account. Their applications to statistics, for instance, include
minimum-distance parameter estimation \cite{BJGR19}, goodness-of-fit testing
\cite{HMS21}, and the estimation of counterfactual distributions in causal
inference \cite{TGR24}. For these applications, distributional limit results for empirical transport costs are key. 

Let $\rho_n$ be the empirical measure of an
i.i.d.\ sample from a probability measure $\rho$, and let $\nu$ be a fixed
reference measure. In $\R^d$, under suitable moment and regularity
assumptions, del Barrio and Loubes \cite[Theorem~4.1]{dBL19} establish
\[
 \sqrt{n}\left(W_2^2(\rho_n,\nu)
 -\E[W_2^2(\rho_n,\nu)]\right)
 \weak \mathcal N(0,\sigma_{\rho,\nu}^2).
\]
Extensions to more general costs and weaker assumptions are obtained in
\cite{dBGL24}; an analogous result on the flat torus is proved in
\cite[Theorem~2.6]{GGCN23}. These results describe first-order fluctuations,
but their limiting variance vanishes under the null hypothesis
$\rho=\nu$; see \cite[Remark~4.4]{dBL19} and the discussion following
\cite[Theorem~2.6]{GGCN23}.

The null setting is precisely the one relevant for calibrating
goodness-of-fit tests against a specified reference distribution, to state but one application. Nontrivial
null limits are known in one dimension under suitable regularity and
integrability assumptions \cite{dBGU05}, and for measures on finite spaces
\cite{SM18}. For continuous distributions in higher dimensions, identifying
the fluctuation scale and limiting law of the unsmoothed quadratic cost
remains a substantially more delicate problem; see
\cite[Section~7, Problem~3]{dBG26}. In this paper, we prove the first limiting distribution theorem for that setting, albeit in the case of the uniform distribution on the torus in
dimensions two, three, and four. We expect the ideas here to generalize to other settings, at the cost of additional technical \textit{tours de force}. 
\medskip

More precisely, consider the $d$-dimensional torus $\T^d=(\R/\Z)^d$ endowed with its geodesic distance
$d_{\T^d}$. Let $\mu$ be the Haar measure on $\T^d$ and let $X_1,\dots,X_n$, $n\in\mathbb N$, be
an i.i.d.\ sample drawn from $\mu$. The empirical measure for that sample is
$\mu_n=n^{-1}\sum_i\delta_{X_i}$. In this work, we consider the squared 2-Wasserstein distance between an
empirical measure and the continuous reference, i.e.,
\begin{equation}
\label{eq: W2}
  W_{n,d} \;:=\; W_2^2(\mu_n,\mu)\;=\;\inf_{\pi\in\Pi(\mu_n,\mu)}\int d_{\T^d}(x,y)^2\dd\pi(x,y),
\end{equation}
where $\Pi(\mu_n,\mu)$ is the set of couplings between $\mu_n$ and $\mu$.

\subsection{Main results}
Our main result
completely characterizes the limiting distribution of $W_{n,d}$ for dimensions $d\le 4$, a long-awaited result. The case $d\geq5$ is the subject of ongoing work by the first author;  the higher dimensions indeed requires  different techniques.

Before stating the main result, let $\mathcal P_d=(\Z^d\setminus\{0\})/\{k\sim-k\}$, $\lambda_{[k]}:=\lambda_k = 4\pi^2|k|^2$
(well defined since $\lambda_{-k}=\lambda_k$), and let $(\zeta_{[k]})$ be independent
$\mathrm{Exp}(1)$. For $d\in \{2,3\}$, define the random variable
\begin{equation}\label{eq:limitlaw}
  L_d=\sum_{[k]\in\mathcal P_d}\frac{2}{\lambda_{[k]}}\bigl(\zeta_{[k]}-1\bigr),
\end{equation}
convergent in $L^2$ and a.s.\ by \eqref{eq:degenerate} below.

\begin{theorem}\label{thm:main}
Under the model above, for $d\in\{2,3\}$ and as $n\to \infty$, 
\[
  n\bigl(W_{n,d}-\E W_{n,d}\bigr)\weak L_d,
  \qquad
  n^2\Var(W_{n,d})\longrightarrow 2\sum_{k\ne0}\lambda_k^{-2}.
\]
When $d=2$, the constant is $\beta(2)/(12\pi^2)$ where $\beta$ is the Catalan $\beta$ function.
\end{theorem}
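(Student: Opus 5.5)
Our plan is to approximate $W_{n,d}$ by the linearized (negative Sobolev) cost and reduce the fluctuations to a degenerate U-statistic of order two. Write $\mu_n-\mu$ in Fourier modes: $\hat\mu_n(k)=n^{-1}\sum_{i}e^{-2\pi i\ip{k}{X_i}}$ for $k\ne0$. The linearized cost is
\[
  \norm{\mu_n-\mu}_{\dot H^{-1}}^2=\sum_{k\ne0}\frac{\abs{\hat\mu_n(k)}^2}{4\pi^2\abs{k}^2}
  =\frac{1}{n^2}\sum_{i,j}G(X_i-X_j),\qquad G(x)=\sum_{k\ne0}\frac{e^{2\pi i\ip{k}{x}}}{\lambda_k},
\]
with $G$ the Green function of $-\Delta$ on $\T^d$. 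The diagonal terms give the constant $G(0)/n$ (after truncation, this is the deterministic part of the mean). The off-diagonal part is a degenerate U-statistic $n^{-2}\sum_{i\ne j}G(X_i-X_j)$ whose kernel has eigenfunctions $e^{2\pi i\ip{k}{x}}$ and eigenvalues $1/\lambda_k$. The first step is therefore a comparison estimate
\[
  n^2\,\E\Bigl[\bigl(W_{n,d}-\E W_{n,d}-(H_n-\E H_n)\bigr)^2\Bigr]\to 0,
\]
where $H_n$ is the (possibly regularized) $\dot H^{-1}$ cost. We would aim to prove this via the Hoeffding decomposition mentioned in the abstract. Concretely, we compute the first-order projection $\E[W_{n,d}\mid X_i]-\E W_{n,d}$, which vanishes by translation invariance on the torus (the conditional law given $X_i$ is a translate). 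We then show that the second-order projections are asymptotically $n^{-2}\bigl(G(X_i-X_j)\bigr)$ up to a remainder of total variance $o(n^{-2})$, and that higher-order Hoeffding components carry variance $o(n^{-2})$.

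For the second-order and higher terms, we would use the PDE linearization of Ambrosio–Stra–Trevisan type. Smooth $\mu_n$ by the heat semigroup at time $t_n=n^{-1+\epsilon}$ (or a log-scale in $d=2$), and let $f_n$ solve $-\Delta f_n=P_{t_n}\mu_n-1$. Then $W_2^2(P_{t_n}\mu_n,\mu)=\int\abs{\nabla f_n}^2+$ error, where the error is controlled by $\norm{\nabla^2 f_n}_\infty\int\abs{\nabla f_n}^2$. Here $\norm{\nabla^2 f_n}_\infty\to0$ with high probability for $d\le4$ as long as $t_n$ is not too small. Separately, $W_2^2(\mu_n,P_{t_n}\mu_n)$ costs about $d t_n$ deterministically, with fluctuations that must also be shown to be $o(n^{-1})$ in $L^2$. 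This is the main obstacle: one needs variance (not merely expectation) control of all remainders at precision $o(n^{-2})$. We would get it via the Efron–Stein inequality applied to each remainder, using the stability of optimal transport maps under resampling a single point (each replacement changes the potential by $O(n^{-1}G)$ locally), together with high-moment bounds on $\norm{\nabla^2 f_n}_\infty$ from chaining over Fourier modes. The dimension restriction $d\le 4$ enters exactly here.

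Given the comparison, we conclude by the classical limit theory of degenerate U-statistics. Since $\sum_k\lambda_k^{-2}<\infty$ for $d<4$, the Hilbert–Schmidt condition holds, and
\[
  \frac1n\sum_{i\ne j}G(X_i-X_j)\weak\sum_{k\ne0}\frac{1}{\lambda_k}\bigl(\abs{Z_k}^2-1\bigr),
\]
with $Z_k$ complex Gaussians satisfying $Z_{-k}=\overline{Z_k}$ and $\E\abs{Z_k}^2=1$. Pairing $k$ with $-k$ gives $\abs{Z_k}^2\sim\mathrm{Exp}(1)$ and weight $2/\lambda_{[k]}$, which is $L_d$. The variance convergence follows from the $L^2$ comparison and the exact variance $n^2\Var=2\frac{n-1}{n}\sum_{k\ne 0}\lambda_k^{-2}$ of the U-statistic. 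For $d=2$, $\sum_{k\ne0}\abs{k}^{-4}=4\zeta(2)\beta(2)$ gives $2\cdot 4\zeta(2)\beta(2)/(16\pi^4)=\beta(2)/(12\pi^2)$.
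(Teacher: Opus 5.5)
There is a genuine gap. Everything rests on the comparison $n^2\E\bigl[(W_{n,d}-\E W_{n,d}-(H_n-\E H_n))^2\bigr]\to0$, and the route you sketch for it cannot reach precision $o(n^{-1})$. It is essentially the heat-smoothing route of Ledoux, and it fails at exactly the step he could not carry out: removing the smoothing.

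Your bookkeeping treats $W_2^2(\mu_n,\mu)$ as $W_2^2(P_{t_n}\mu_n,\mu)+W_2^2(\mu_n,P_{t_n}\mu_n)$ plus small remainders, but no such splitting exists. Composing the two optimal maps gives only an upper bound, with an uncontrolled cross term between the small- and large-scale displacements. The triangle inequality gives only $\abs{W_2^2(\mu_n,\mu)-W_2^2(P_{t_n}\mu_n,\mu)}\le W_2(\mu_n,P_{t_n}\mu_n)\bigl(2W_2(\mu_n,\mu)+W_2(\mu_n,P_{t_n}\mu_n)\bigr)$. In $d=2$ the right-hand side is at least of order $n^{-1/2}\sqrt{\log n/n}\gg n^{-1}$, because $W_2^2(\mu_n,P_{t_n}\mu_n)\gtrsim n^{-1}$ when $P_{t_n}\mu_n$ has bounded density. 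Also, $W_2^2(\mu_n,P_{t_n}\mu_n)$ is not $\approx dt_n$ once $t_n\gg n^{-2/d}$: the heat blobs overlap and the optimal cost is much smaller.

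The Efron--Stein step does not repair this. A stability bound of the type ``resampling one point changes the potential by $O(n^{-1}G)$'' controls the responses of $W_{n,d}$ and of its linearization separately. At best it gives single-site differences of size $O(n^{-3/2})$, hence a variance bound $O(n^{-2})$. That is the order of the fluctuations themselves, not $o(n^{-2})$. To get $o(n^{-2})$ you would need the resampling responses of $W_{n,d}$ and $H_n$ to cancel at leading order. That means showing the Kantorovich potential agrees with the Poisson potential to $o(n^{-1/2})$ where it is evaluated, and also controlling the non-infinitesimal cell-scale response. That quantitative linearization is the real content of the theorem, and the proposal does not supply it.

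The paper avoids comparing $W_{n,d}$ with any explicit energy. It uses the exact envelope identity $\nabla_{X_i}W_{n,d}=2d_i/n$ and integration by parts to write the covariance of $W_{n,d}$ with any canonical $U$-statistic as a pairing of the displacement current $\mathfrak j$ with a smooth field (Lemma~\ref{lem:order-r-response}). By the duality of Lemma~\ref{lem:pairwise-duality}, it suffices to test against $S_f$ with $\widehat f$ supported in a dyadic band $A_s$. There only the \emph{averaged} discrepancy $E_s=J_s+\rho_s\nabla u_s$ enters, and this is what Goldman--Huesmann control: $\norm{E_s}_{L^2_{\omega,x}}\lesssim L^{-2}s^{-1}$, with a logarithm in $d=2$.

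Product frequencies above $\Lambda=n^{2\alpha}$ are discarded with $\Lambda\norm{\Pi_{>\Lambda}F}_2^2\le\sum_i\norm{\nabla_{X_i}F}_2^2\le\frac4n\E W_{n,d}$, which is where the gradient bound gains the factor $\Lambda^{-1}$. Orders $r\ge3$ are handled by the same response functional: the Poisson term vanishes by Hoeffding orthogonality, and the excluded-label terms are bounded by a diagonal-trace estimate. So only a weak, scale-by-scale linearization is needed, and that is available. In your plan, by contrast, the Hoeffding decomposition does no work beyond $P_1W_{n,d}=0$.

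Your final step is correct and matches Proposition~\ref{prop:green-fixed-limit}: the degenerate $U$-statistic limit, the exact variance $2\frac{n-1}{n}\sum_{k\ne0}\lambda_k^{-2}$, and the value $\beta(2)/(12\pi^2)$.
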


\begin{theorem}\label{thm:main2}
For $d=4$, as $n \to \infty$, it holds that
\[
  \frac{n}{\sqrt{\log n}}\bigl(W_{n,4}-\E W_{n,4}\bigr)\weak N\Bigl(0,\frac1{16\pi^2}\Bigr),
  \qquad
  \frac{n^2}{\log n}\Var(W_{n,4})\longrightarrow\frac1{16\pi^2}.
\]
\end{theorem}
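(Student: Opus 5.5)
The strategy is to approximate $W_{n,4}$ by the linearized (negative Sobolev) cost and then to treat that quadratic form as a degenerate $U$-statistic of order two whose kernel depends on $n$. Write $\hat\mu_n(k)=n^{-1}\sum_j e^{-2\pi i k\cdot X_j}$. Formally, the leading term is
\[
 Q_n=\sum_{k\ne0,\,|k|\le K_n}\frac{|\hat\mu_n(k)|^2}{\lambda_k}
 =\frac{1}{n^2}\sum_{j,l}G_n(X_j-X_l),
 \qquad G_n(x)=\sum_{0<|k|\le K_n}\frac{e^{2\pi i k\cdot x}}{\lambda_k}.
\]
This is $\|\mu_n-\mu\|_{\dot H^{-1}}^2$, regularized by a frequency cutoff $K_n$ or, better, by heat-semigroup smoothing at time $t_n\approx n^{-2/d}$ up to logarithms. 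A factor of $1/2$ from linearization is absorbed consistently. The diagonal $j=l$ is deterministic, so the fluctuation of $Q_n$ equals $n^{-2}\sum_{j\ne l}G_n(X_j-X_l)$. This is a completely degenerate $U$-statistic, since $\int G_n(x-y)\dd\mu(y)=0$.

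The first step is to invoke the paper's stability or Hoeffding-decomposition estimates, presumably established in the later sections, in the form
\[
\Var\bigl(W_{n,4}-Q_n\bigr)=o\bigl(\log n/n^2\bigr).
\]
This is where the main difficulty lies. In $d=4$ the linearization error is only barely smaller than the main term. The plan is to smooth $\mu_n$ by the heat kernel at time $t$, compare $W_2^2(\mu_n,\mu)$ with $W_2^2(P_t\mu_n,\mu)$ using the contraction bound $W_2^2(\mu_n,P_t\mu_n)\lesssim t$, and then, on the high-probability event that $\|P_t\mu_n-1\|_\infty$ is small, linearize the Monge–Ampère equation to get
\[
W_2^2(P_t\mu_n,\mu)=\|P_t\mu_n-\mu\|_{\dot H^{-1}}^2\,(1+O(\|P_t\mu_n-1\|_\infty)).
\]
The centered errors must be controlled in variance, not merely in expectation. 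For this I would use the Efron–Stein inequality applied to the remainder: resampling one point perturbs the remainder by $o(\sqrt{\log n}/n^{3/2})$, which is a stability estimate for the optimal potentials.

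Next I compute the variance of the $U$-statistic. By orthogonality of the characters,
\[
\Var\Bigl(n^{-2}\sum_{j\ne l}G_n(X_j-X_l)\Bigr)=\frac{2n(n-1)}{n^4}\sum_{0<|k|\le K_n}\lambda_k^{-2}.
\]
In $d=4$ the lattice sum behaves like $(16\pi^4)^{-1}\cdot 2\pi^2\log K_n$. With $K_n\asymp n^{1/4}$, which is the scale at which smoothing and linearization errors balance, this gives $\Var\sim\frac{2}{16\pi^4}\cdot 2\pi^2\cdot\frac14\log n/n^2=\log n/(16\pi^2 n^2)$, which matches the claimed constant. I expect the precise effective cutoff to be the hardest bookkeeping point: the $\frac14$ comes from frequencies up to $n^{1/4}$, and the truncation must be justified so that frequencies above $n^{1/4}$ contribute $o(\log n)$ after the nonlinear correction. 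Heat smoothing at $t=n^{-1/2}$ handles this automatically, because $\sum_k e^{-2\lambda_k t}\lambda_k^{-2}\sim\frac{1}{8\pi^2}\log t^{-1/2}$.

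Finally, for asymptotic normality I apply de Jong's theorem, or the martingale CLT, to the degenerate $U$-statistic with kernel $H_n=G_n/\sqrt{\Var}$. Two conditions are needed. The first is $\max_j\sum_l\Var(H_n(X_j-X_l))/\sigma^2\to0$, which is immediate. The second is the fourth-moment condition $\E U^4/\sigma^4\to3$. Equivalently, by Hall's criterion, the requirement is
\[
\frac{\E[G_n^{(2)}(X_1-X_2)^2]}{(\E G_n(X_1-X_2)^2)^2}\to0,
\qquad G_n^{(2)}=G_n*G_n.
\]
In Fourier terms the numerator is $\sum\lambda_k^{-4}=O(1)$ in $d=4$, and the denominator is $(\sum\lambda_k^{-2})^2\asymp\log^2 n$. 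The ratio is therefore $O(\log^{-2}n)\to0$, and one also checks $n^{-1}\E G_n^4/(\E G_n^2)^2\to0$. This is exactly why $d=4$ is Gaussian while $d=2,3$ are not: there $\sum\lambda_k^{-2}<\infty$ and no single frequency becomes negligible. Combining the CLT with the $L^2$ approximation from the first step and Slutsky's lemma gives both claims of Theorem~\ref{thm:main2}.
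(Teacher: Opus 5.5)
Your top-level skeleton matches the paper's. You approximate $n(W_{n,4}-\E W_{n,4})$ in $L^2$ by the Green statistic truncated at $\abs{k}\le n^{1/4}$, apply de Jong's theorem to that degenerate $U$-statistic, and finish with Slutsky and the reverse triangle inequality. Your fourth-moment conditions, $\sum\lambda_k^{-4}=O(1)$ versus $(\sum_{\abs{k}\le n^{1/4}}\lambda_k^{-2})^2\asymp\log^2 n$, and the constant $\frac1{16\pi^2}$ are correct. There is no factor $\frac12$ to absorb, since $W_{n,4}\approx\int\abs{\nabla u}^2\dd\mu$. If you import the approximation from Proposition~\ref{prop:aggregate4}, the rest is the paper's proof.

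The gap is in how you propose to prove that approximation, which is the whole difficulty.

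\emph{The smoothing comparison is too coarse.} The bound $W_2^2(\mu_n,P_t\mu_n)\lesssim t$ only gives $\abs{W_2^2(\mu_n,\mu)-W_2^2(P_t\mu_n,\mu)}\lesssim\sqrt t\,n^{-1/4}$. For $t\gtrsim n^{-1/2}$ this is of order $n^{-1/2}$: the size of $\E W_{n,4}$ itself, and a factor $\sqrt{n/\log n}$ above the fluctuation scale. Such a bound carries no information about fluctuations; this is exactly the obstruction Ledoux identified for removing the smoothing.

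\emph{The linearization event is too costly.} The event $\norm{P_t\mu_n-1}_\infty\ll1$ requires $nt^2\gg\log n$, since at $t\approx n^{-1/2}$ a ball of radius $\sqrt t$ holds only $O(1)$ atoms. On that event the multiplicative error $O(\norm{P_t\mu_n-1}_\infty)\norm{P_t\mu_n-\mu}_{\dot H^{-1}}^2$ is still of order $n^{-1/2}$ up to logarithms.

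\emph{Efron--Stein then carries everything, and that is essentially the theorem.} You need $\E\abs{R-R^{(i)}}^2=o(\log n/n^3)$. That is, resampling one atom must change $W_{n,4}$ by the same amount as it changes $Q_n$, up to $o(\sqrt{\log n}\,n^{-3/2})$. The change in $Q_n$ is itself typically of size $\sqrt{\log n}\,n^{-3/2}$, accumulated over all scales from $1$ down to $n^{-1/4}$. This is heuristically plausible, but proving it is a multiscale quantitative linearization of the transport down to the cell scale. Available stability estimates for optimal maps are H\"older-type and far too lossy. Note also that $\frac12\sum_i\E\abs{R-R^{(i)}}^2=\sum_r r\Var(P_rR)$, so Efron--Stein penalizes high Hoeffding orders and is a harder target than $\Var(R)$.

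\emph{The cutoff for $W_{n,4}$ itself is not justified.} Heat smoothing fixes the cutoff only for the model $Q_n$. Nothing in your sketch shows that the pair kernel of $W_{n,4}$ stops tracking $G$near $\abs{k}\approx n^{1/4}$, and that is what determines $\frac1{16\pi^2}$.

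\emph{What the paper does instead.} It never compares with a smoothed problem.
\begin{itemize}
\item It tests $W_{n,4}$ against canonical $U$-statistics via the exact derivative $\nabla_{X_i}W_{n,4}=2d_i/n$ (Lemmas~\ref{lem:gradient} and~\ref{lem:order-r-response}).
\item It writes the error current exactly as $\mathfrak j+\nabla u=\operatorname{div}\Sigma$, with $\Sigma$ carried by the transport segments.
\item It bounds $\Sigma$ at scale $L^{-1}$ through the Goldman--Huesmann regularity radius and segment-length moments (Lemma~\ref{lem:stress-square-function}).
\item It sums frequency bands with that square-function estimate and Hoeffding orders with Bessel's inequality.
\item It removes product frequencies above $\Lambda=n^{1/2}$ with the product Poincar\'e inequality (Lemma~\ref{lem:high-frequency}). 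This last step is what pins the effective cutoff of $W_{n,4}$ at $n^{1/4}$.
\end{itemize}
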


\medskip

\subsection{Intuition and structure of proof}
\label{ssec:intuition}
We start by providing the intuition of Theorem \ref{thm:main} and Theorem \ref{thm:main2}. Let $T_n$ be the optimal
transport map from $\mu$ to $\mu_n$, and let $u$ be the mean-zero solution of
\begin{align*}
    \Delta u=\mu_n-\mu,
\end{align*}
understood as a weak solution.

Formally, the Poisson equation is the linearization of the
Monge--Ampère equation around the identity map. Indeed, transport
from a density $f$ to a density $g$ satisfies
\[
    g(T)\det DT=f.
\]
Forgetting the absence of density for now and writing $T_n=\mathrm{Id}+v_n$ and linearizing around the uniform
density, we use $\det(I+Dv_n)\approx1+\operatorname{div}v_n$ to obtain
\[
    \mu_n-\mu\approx-\operatorname{div}v_n.
\]
Since Transport maps are gradients of convex functions by the Brenier--McCann theorem, the mean-zero
solution $u$ of $\Delta u=\mu_n-\mu$ therefore satisfies
\[
    v_n=T_n-\mathrm{Id}\approx-\nabla u.
\]
Thus,
\begin{align*}
    W_{n,d}
    \approx
    \int_{\T^d}\abs{\nabla u}^2\dd\mu
    =
    \left\langle
        \mu_n-\mu,
        (-\Delta)^{-1}(\mu_n-\mu)
    \right\rangle.
\end{align*}
We call the expression on the right the linearized transport energy. Let   $e_k(x) := e^{2\pi ik\cdot x}$ be the Fourier basis functions on the torus. For $1\leq K\leq \infty$, define
\begin{align*}
G_K(x):=\sum_{\substack{k\in\Z^d\\0<\abs{k}\leq K}}
    \frac{e_k(x)}{\lambda_k},
\end{align*}
to be the Green kernel of the negative Laplacian on the torus, truncated at frequency $K$.   Similarly, for any function $f$, $\hat f $ is its Fourier transform, while $\widehat{f_K}(k) = \1_{|k| \leq K} \hat{f}(k)$ is the Fourier truncated function. Define the Green U-statistics
\begin{align}\label{eq:Ustat}
    U_{n,K}
    :=\frac{1}{n^2}\sum_{i\neq j}G_K(X_i-X_j).
\end{align}

To relate this $U_{n,K}$ to the linearized transport energy, set $\nu_n:=\mu_n-\mu$. We first observe
\begin{align*}
    (-\Delta^{-1} \nu_n)_K = G_K * \nu_n.
\end{align*}
Therefore, as $G_K( x - \cdot)$ has a zero expectation under $\mu$ for each $x$, 
\begin{align*}
    \left\langle\nu_n,G_K*\nu_n\right\rangle
    &=
    \int_{\T^d\times\T^d}
    G_K(x-y)\dd\nu_n(x)\dd\nu_n(y)\notag\\
    &=
    \frac{1}{n^2}\sum_{i,j=1}^nG_K(X_i-X_j)
    =
    U_{n,K}+\frac{G_K(0)}{n}.
\end{align*}
The diagonal term $G_K(0)/n$ is deterministic. Hence $U_{n,K}$ is exactly
the centered random part of the truncated linearized transport energy.
Moreover, $G_K(x-y)$ is canonical (Definition \ref{def:Canonical function}) in both variables (the zero expectation property just mentioned), so $U_{n,K}$ belongs
to the second Hoeffding subspace (Appendix \ref{app:hoeffding}). Think of the latter as the space of sums of kernels of order 2. 

Since we try to approximate $W_{n,d}$ by a function that lives in the second Hoeffding subspace, it suggests separating the second-order component of
$W_{n,d}-\E W_{n,d}$ from the higher-order terms. 
Denoting by $P_r$ the Hoeffding projection of order $r$,  
since
$P_1W_{n,d}=0$ (Lemma \ref{lem:P1=0}), our goal is to prove
\begin{align*}
    nP_2W_{n,d}
    &=
    nU_{n,K}+o_{L^2}(\sigma_n),\\
    n\sum_{r\geq3}P_rW_{n,d}
    &=
    o_{L^2}(\sigma_n),
\end{align*}
where $K$ and $\sigma_n$ are chosen according to the dimension.
\newpage 

\noindent
This intuition guides the structure of the proof, which consists of three main steps:
\begin{enumerate}
    \item show that only the second-order Hoeffding component contributes;
    \item identify this component with $U_{n,K}$;
    \item prove the limiting laws for $U_{n,K}$.
\end{enumerate}
In the developments of the display above, we choose $ K=\infty, 
    \sigma_n=1 $
   \text{ for }$ d\in\{2,3\},$
 and   $ K=n^{1/4},
   \sigma_n=\sqrt{\log n},$ for $d=4.$ Still, in both regimes, we prove the reduction
\begin{align}\label{eq:reduction-chain}
    n(W_{n,d}-\E W_{n,d})
    =nP_2W_{n,d}+o_{L^2}(\sigma_n)
    =nU_{n,K}+o_{L^2}(\sigma_n).
\end{align}
The limiting laws of the Green statistics are
\begin{align}\label{eq:green-limits}
    \frac{nU_{n,K}}{\sigma_n}
    \weak
    \begin{cases}
        L_d, & d\in\{2,3\},\\
        \mathcal N\left(0,\dfrac{1}{16\pi^2}\right), & d=4.
    \end{cases}
\end{align}
The reason we can directly choose $K = \infty$ for $d = 2,3$ is that $G \in L^2(\mu)$ as $\sum_{k\in \Z^d} \frac{1}{\lambda_k^2} <\infty$.  In dimension four however, $G\notin L^2(\mu)$, so a cutoff is necessary. The choice 
\begin{align*}
    K=n^{1/4},
\end{align*}
is the correct one as $K^{-1}$ is the typical transport-cell diameter. Frequencies much
larger than $K$ correspond to distances below the cell scale, where the
linear Poisson approximation no longer describes the individual transport
cells. By Appendix \ref{app:lattice}, we have the variance
\begin{align*}
    \Var(nU_{n,K})
    &=
    \frac{4}{n^2}\binom{n}{2}\norm{G_K}_2^2
    =
    2\frac{n-1}{n}
\sum_{\substack{k\in\Z^4\\0<\abs{k}\leq n^{1/4}}}
    \frac{1}{\lambda_k^2}
    =
    \frac{\log n}{16\pi^2}+O(1).
\end{align*}
As we shall see, the most technical part of the work is establishing the asymptotic equivalences.

\subsection{Related work}
\label{ssec:related-work}

Random Euclidean matching has a long history in combinatorial probability,
geometric optimization, and empirical process theory. A starting point is
the theorem of Ajtai, Koml\'os, and Tusn\'ady \cite{AKT84}: for two independent
samples of $n$ uniform points in the unit square, the minimum total
Euclidean matching length has order $\sqrt{n\log n}$. With probability
measures, and hence a factor $1/n$ in the cost, this corresponds to the
scale $\sqrt{\log n/n}$ for the $1$-Wasserstein distance. Shor
\cite{Sho86} used lower bounds for planar up-right matching in the
average-case analysis of bin-packing algorithms. Leighton and Shor
\cite{LS89} revealed a different planar logarithm for
\emph{bottleneck} matching: the smallest possible longest edge when
matching random points to a regular grid has order
$n^{-1/2}(\log n)^{3/4}$, with high probability. Shor and Yukich
\cite{SY91} established the corresponding higher-dimensional theory,
where the bottleneck scale is $(\log n/n)^{1/d}$ for $d\geq3$.
These results distinguish the effects of dimension and of the matching
objective; the longest edge and the average quadratic cost have different
logarithmic corrections.

Talagrand developed a complementary approach through empirical processes,
majorizing measures, and transportation estimates. His work on matching
in many dimensions \cite{Tal92}, his unified treatment of matching and
empirical discrepancy \cite{Tal94a}, and his transportation estimates for
the uniform measure \cite{Tal94b} substantially extended the classical
matching theory; see also the systematic account in \cite{Tal14}.
In dimension two, Talagrand and Yukich \cite{TY93} established
exponential-square integrability estimates for matched distances. These
give upper bounds of order $(\log n/n)^{p/2}$ for the average $p$th-power
matching cost, for every fixed finite $p\geq1$.
In a related direction, Dobri\'c and Yukich \cite{DY95} proved
an almost sure limit theorem for the minimum total bipartite Euclidean
matching length in dimensions $d\geq3$, including nonuniform sampling laws.
Yukich's monograph \cite{Yuk98} develops the broader theory of Euclidean
optimization functionals and the subadditive methods underlying many
such laws of large numbers. These results provide essential background
for the study of matching costs, while the centered quadratic cost
considered here requires information on a finer scale than its leading
law of large numbers.

For quadratic matching in dimension two, the identification of the
leading constant led to a different set of methods. Caracciolo,
Lucibello, Parisi, and Sicuro \cite{CLPS14} proposed a linearization through
the Poisson equation and predicted the leading constant for bipartite
matching. Ambrosio, Stra, and Trevisan \cite{AST19} made the PDE approach
rigorous and proved, in the empirical-to-uniform setting of the present
paper,
\[
    \lim_{n\to\infty}\frac{n}{\log n}\E W_{n,2}
    =\frac{1}{4\pi}.
\]
For two independent empirical measures, the corresponding constant is
$1/(2\pi)$. Ambrosio and Glaudo \cite{AG19} refined these estimates and
extended the precise bipartite asymptotics to closed surfaces. Bobkov and
Ledoux \cite{BL21} gave an elementary Fourier-analytic approach to the
AKT bounds, further clarifying the role of smoothing and spectral
estimates. More recently, Armegioiu, Goldman, Grotto, and Trevisan
\cite{AGGT26} established in a preprint the existence and characterization
of the leading expected-cost constants for all finite power costs on the
two-dimensional torus in the bipartite problem.

In dimensions $d\geq3$, Goldman and Trevisan \cite{GT21} combined
subadditivity with PDE estimates to prove convergence of the suitably
rescaled expected matching costs for every power $p\geq1$, both for
bipartite matching and for transport between the empirical and uniform
measures on a cube. In particular, the quadratic mean cost has order
$n^{-2/d}$. This extends the range of powers covered by earlier
subadditive matching arguments. It also illustrates the distinction
between the critical dimension two for the mean quadratic cost and the
critical dimension four for the variance studied here.

The PDE approach also gives information on optimal displacements.
Ambrosio, Glaudo, and Trevisan \cite{AGT19} proved that, in the planar
empirical-to-uniform problem, the optimal map is approximated in $L^2$
by the identity plus the gradient of a regularized Poisson potential.
Goldman, Huesmann, and Otto \cite{GHO21} developed quantitative
linearization estimates for the Monge--Amp\`ere equation with rough data.
Building on this theory, Goldman and Huesmann \cite{GH22} established
fluctuation results for the displacement in dimensions two and three,
with a curl-free Gaussian free field as the limiting object. Their
quantitative estimates are a principal analytic input to the present
work. Goldman, Huesmann, and Otto \cite{GHO25} subsequently proved a
mean-cost error of order $O(\log\log n/n)$ for empirical-to-uniform
matching on the two-dimensional flat torus and obtained sharp
displacement estimates. 

Distributional results for transport costs also arise in statistical
optimal transport. The central limit theorems of del Barrio and Loubes
\cite{dBL19}, their extensions to general costs \cite{dBGL24}, and the
torus results of Gonz\'alez-Delgado, Gonz\'alez-Sanz, Cort\'es, and Neuvial
\cite{GGCN23} describe fluctuations, centered at the expected empirical
cost, at rate $\sqrt n$ under appropriate assumptions. In the uniform
null setting, their first-order limiting
variance vanishes. Nontrivial null limits in one dimension
\cite{dBGU05} and on finite spaces \cite{SM18} demonstrate that a finer
analysis is then necessary, but do not determine the fluctuations for
continuous distributions in higher dimensions. The survey
\cite[Section~7, Problem~3]{dBG26} explicitly highlights this remaining
question.

The closest predecessor for the scalar fluctuations considered here is
Ledoux \cite{Led19}. He proves a limit theorem for the renormalized dual
Sobolev energy and, as a consequence, shows that for $d\leq3$ one can
choose heat-smoothing times $t_n\to0$ such that
\[
    n\left(
        W_2^2(\mu_n^{t_n},\mu)
        -\E W_2^2(\mu_n^{t_n},\mu)
    \right)
    \weak\chi.
\]
Here $\mu_n^{t_n}$ is the heat-smoothed empirical measure. On the flat
torus, for $d=2,3$, the limiting quadratic Gaussian functional $\chi$
has the law of $L_d$ in \eqref{eq:limitlaw}. Ledoux also explains why the
available smoothing estimates do not allow one to replace
$\mu_n^{t_n}$ by the atomic measure $\mu_n$ at the required fluctuation
scale. Thus, removing the smoothing is a substantive additional step.

The present work addresses this step for the centered quadratic cost on
the flat torus. Theorems~\ref{thm:main} and~\ref{thm:main2} give the
distributional limits and sharp variance asymptotics for the original
empirical measure in dimensions two, three, and four. The argument
identifies the second Hoeffding component with a Green-kernel statistic
and controls the higher-order components at the fluctuation scale.
This makes the spectral distinction explicit: the squared Green
coefficients are summable in dimensions two and three, producing the
non-Gaussian law $L_d$, whereas their logarithmic divergence in dimension
four produces the normalization $n/\sqrt{\log n}$ and a Gaussian limit.

\subsection{Organization of the work and notation}
Section~\ref{sec:hoeffding} recalls the Hoeffding decomposition and develops
the basic identities for the Wasserstein cost used throughout the paper.
Section~\ref{sec:kernel} contains the main analysis needed for both results.
Section~\ref{sec:clt} completes the proofs of the two main theorems. The
appendices collect the required standard material and a remark on superconcentration.

\paragraph{Notation.}
Throughout, $\mu$ denotes the Haar probability measure on $\T^d$, $\mu_n$ the
empirical measure, and $P_r$ the Hoeffding projection of order $r$. For $a \in \mathbb{C}$, let $\bar a$ denote its complex conjugate.

For
$k\in\Z^d$, set
\begin{align*}
    e_k(x):=e^{2\pi i k\cdot x},
    \qquad
    \lambda_k:=4\pi^2\abs{k}^2,
    \qquad
    \widehat h(k):=\int_{\T^d}h(x)\overline{e_k(x)}\dd\mu(x),
\end{align*}
so that $-\Delta e_k=\lambda_ke_k$. For $B\subset\Z^d$, we denote by $h_B$ the Fourier projection of $h$ onto
$B$, defined by
\begin{align*}
    \widehat{h_B}(k)
    :=
    \mathbf{1}_{\{k\in B\}}\widehat h(k),
    \qquad
    h_B(x)
    =
    \sum_{k\in B}\widehat h(k)e_k(x).
\end{align*}
Further, $\norm{\cdot}_p$ denotes the corresponding $L^p(\mu)$ norm. For a random variable $R_n$, we write 
$R_n=o_{L^2}(a_n)$ when $\E[R_n^2]^{1/2}=o(a_n)$. We write $A\lesssim B$ if $A\leq C_dB$ for constant $C_d$ depending only on the dimension $d$, $(n)_r:=n(n-1)\cdots(n-r+1)$, and $L:=n^{1/d}$.

For $1\leq K<\infty$, recall from the introduction that
\begin{align}
\label{eq:green}
G_K(x)=\sum_{\substack{k\in\Z^d\\0<\abs{k}\leq K}}
    \frac{e_k(x)}{\lambda_k},
    \qquad
    G:=G_\infty,
    \qquad
    -\Delta G=\delta_0-\mu.
\end{align}
Each $G_K$ is real, even, and centered. Moreover,
\begin{align}\label{eq:degenerate}
    \int_{\T^d}G_K(x-y)\dd\mu(y)=0,
    \qquad
    \norm{G}_2^2
    =\sum_{k\in\Z^d\setminus\{0\}}\lambda_k^{-2}<\infty
    \quad\Longleftrightarrow\quad d\leq3.
\end{align}
Thus $G\in L^2(\mu)$ for $d=2,3$, while in dimension four we use only the truncated kernels $G_K$. We say a function $f:(\T^d)^r\to\R$ is symmetric if
\begin{align*}
    f(x_1,\ldots,x_r)
    =
    f(x_{\sigma(1)},\ldots,x_{\sigma(r)})
\end{align*}
for every permutation $\sigma$ of $[r]$. The next definition is also key for our purposes.
\begin{definition}[Canonical functions] \label{def:Canonical function}
A function $f:(\T^d)^r\to\R$ is canonical
if, for every $j\in[r]$ and almost every fixed choice of the other
variables,
\begin{align*}
    \int_{\T^d}
    f(x_1,\ldots,x_{j-1},y,x_{j+1},\ldots,x_r)\dd\mu(y)
    =0.
\end{align*}
Equivalently, $f$ is canonical if and only if
$\widehat f(k_1,\ldots,k_r)=0$ whenever there exists $j\in[r]$ such that $k_j=0$.
\end{definition}
 Consequently, any Fourier multiplier preserves canonicity. In
particular, whenever they are well defined, derivatives, inverse derivatives,
Laplacians, inverse Laplacians, convolutions, and inverse convolutions preserve
canonicity. Canonicity is the key definition for the Hoeffding decomposition and the whole paper.
Given $x, y \in \mathbb{T}^d$, choose lifts $\widetilde x, \widetilde y \in \mathbb{R}^d$ for which
$|\widetilde x - \widetilde y|$ is minimal, using a fixed Borel tie-break on the cut locus, and set
\begin{equation}
\label{eq:defD}
\df_x(y) := \widetilde x - \widetilde y, \qquad |\df_x(y)| = d_{\mathbb{T}^d}(x,y). 
\end{equation}
For square-integrable scalar-, vector-, or matrix-valued functions $F$ and
$H$ on $\T^d$, we use the same notation $\langle F,H\rangle$ for the
corresponding $L^2$ inner product:
\begin{align*}
    \langle F,H\rangle
    &:=
    \int_{\T^d}F(x)H(x)\dd\mu(x),
    &&F,H:\T^d\to\R,\\
    \langle F,H\rangle
    &:=
    \int_{\T^d}F(x)\cdot H(x)\dd\mu(x),
    &&F,H:\T^d\to\R^d,\\
    \langle F,H\rangle
    &:=
    \int_{\T^d}
    \operatorname{Tr}\bigl(F(x)^\top H(x)\bigr)\dd\mu(x),
    &&F,H:\T^d\to\R^{d\times d}.
\end{align*}

%------------------------------------------------
%                 Section Hoeffding
%------------------------------------------------
\section{Hoeffding decomposition of the squared Wasserstein distance}
\label{sec:hoeffding}

In this section, we develop the exact identities used to analyze the Hoeffding components of $W_{n,d}$. 

We first recall the definition of the Hoeffding decomposition and show that the first-order component vanishes. We then compute the derivative of $W_{n,d}$ with respect to one sample point in terms of its average displacement. Instead of computing
the higher-order Hoeffding kernels directly, we test them against canonical statistics and express their variances through the response functionals. For the second-order component, this gives a simpler formula. Finally, the product Poincar\'e inequality controls the part
of the variance carried by the higher order terms of the Fourier decomposition, thereby showing that they are asymptotically negligible. These results will then be combined with the transport estimates in Section \ref{sec:kernel}.

\subsection{The decomposition and the vanishing first-order term}
\label{ssec:P1}
In Appendix~\ref{app:hoeffding}, we develop the fact that a random variable admits a Hoeffding decomposition obtained by projections of different orders $r$. The components are orthogonal and we can write
\begin{align*}
    W_{n,d}-\E W_{n,d}
    =\sum_{r=1}^{n}P_rW_{n,d},
    \qquad \Var(W_{n,d})
=\sum_{r=1}^{n}\Var(P_rW_{n,d}).
\end{align*}
We first show that the first-order component vanishes.

\begin{lemma}
\label{lem:P1=0}
For every $d\geq 2$ and $n\geq 1$,
\begin{align}\label{eq:P1=0}
    P_1W_{n,d}=0.
\end{align}
\end{lemma}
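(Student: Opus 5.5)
The first-order Hoeffding projection is
\[
P_1W_{n,d}=\sum_{i=1}^n h(X_i),\qquad h(x):=\E\bigl[W_{n,d}\mid X_i=x\bigr]-\E W_{n,d},
\]
where $h$ does not depend on $i$ because $W_{n,d}$ is a symmetric function of the sample (Appendix~\ref{app:hoeffding}). So it suffices to show that the conditional expectation $x\mapsto\E[W_{n,d}\mid X_1=x]$ is constant in $x$.

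The plan is to use the translation invariance of the flat torus. For $a\in\T^d$, let $\tau_a(y)=y+a$. Translations are isometries of $d_{\T^d}$, and they preserve the Haar measure, $(\tau_a)_\#\mu=\mu$. If $\pi$ is a coupling of $\mu_n$ and $\mu$, then $(\tau_a\times\tau_a)_\#\pi$ is a coupling of $(\tau_a)_\#\mu_n$ and $\mu$ with the same cost, and conversely. Hence
\[
W_2^2\Bigl(\tfrac1n\sum_j\delta_{x_j+a},\,\mu\Bigr)=W_2^2\Bigl(\tfrac1n\sum_j\delta_{x_j},\,\mu\Bigr)
\]
for every configuration $(x_1,\dots,x_n)$ and every $a$. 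In other words, the function $F(x_1,\dots,x_n):=W_2^2(\mu_n,\mu)$ is invariant under the simultaneous shift of all of its arguments.

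Now fix $x\in\T^d$ and write
\[
\E[W_{n,d}\mid X_1=x]=\int F(x,x_2,\dots,x_n)\,\dd\mu^{\otimes(n-1)}(x_2,\dots,x_n).
\]
Apply the change of variables $x_j=x_j'+x$ for $j\ge2$, which preserves $\mu^{\otimes(n-1)}$ by Haar invariance. Combined with the shift invariance of $F$, this gives
\[
\E[W_{n,d}\mid X_1=x]=\int F(0,x_2',\dots,x_n')\,\dd\mu^{\otimes(n-1)}=\E[W_{n,d}\mid X_1=0],
\]
which is independent of $x$. Therefore $h\equiv0$ and $P_1W_{n,d}=0$.

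The only points needing care are measurability and integrability. $F$ is bounded, since $d_{\T^d}\le\sqrt d/2$, and it is continuous, since $W_2$ is continuous under weak convergence on a compact space. This justifies the conditional expectation formula via Fubini. The case $n=1$ is covered by the same argument. There is no real obstacle here; the lemma is essentially a symmetry observation.
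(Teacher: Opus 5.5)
Your proof is correct and follows the same route as the paper: joint translation invariance of the matching cost, combined with Haar invariance under the shift $x_j\mapsto x_j+x$ of the remaining sample points, makes $x\mapsto \E[W_{n,d}\mid X_i=x]$ constant, so every first-order Hoeffding kernel vanishes. Your extra justifications fill in details the paper leaves implicit: the shift invariance of $W_2^2(\mu_n,\mu)$ via pushed-forward couplings, and the boundedness and continuity of $F$ needed for Fubini.
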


\begin{proof}
Fix $i\in[n]$ and define
\(
    \phi_i(x):=\E[W_{n,d}\mid X_i=x],
     x\in\T^d.
\)
For any $t\in\T^d$, translation invariance of $\mu$ and joint translation
invariance of the matching cost yield
\begin{align*}
    \phi_i(x+t)
    &=\int_{(\T^d)^{n-1}}
        W_{n,d}(x+t,y_{-i}+t)\,
        d\mu^{\otimes(n-1)}(y_{-i})\\
    &=\int_{(\T^d)^{n-1}}
        W_{n,d}(x,y_{-i})\,
        d\mu^{\otimes(n-1)}(y_{-i})\\
    &=\phi_i(x)
\end{align*}
for every $x,t\in\T^d$. Hence $\phi_i$ is constant. Integrating with respect to $x$ shows that this constant is
$\E W_{n,d}$. Therefore,
\begin{align*}
    P_1W_{n,d}
    =\sum_{i=1}^{n}
        \left(\E[W_{n,d}\mid X_i]-\E W_{n,d}\right)
    =0. & \qedhere
\end{align*}
\end{proof}

\subsection{Perturbing one sample point}
\label{ssec:derivative}

The optimal transport problem between a continuous distribution and an empirical measure gives rise to a tessellation of space where each cell has mass $1/n$ and its content is sent to a unique Dirac mass. We will denote the cell transported to $X_i$ by $A_i$.
The optimal coupling measure is thus given by 
\[
\pi_n(\dd x , \dd y) = \sum_{i=1}^n \delta_{X_i}(\dd x ) \1_{A_i}(y)\mu(\dd y).
\]
Recalling the notation  $\df_{X_i}$ from  \eqref{eq:defD}, we further introduce the average source-minus-target displacement
\begin{equation}
d_i := n \int_{A_i} \df_{X_i}(y) \dd\mu(y).
\end{equation}

\begin{lemma}[Gradient of cost with respect to a sample point]
\label{lem:gradient}
The function $(X_1, \ldots, X_n) \mapsto W_{n,d}$ is Lipschitz and, at almost every configuration,
\begin{align}
\label{eq:derivative_identity}
    \nabla_{X_i}W_{n,d} = \frac{2d_i}{n}
\end{align}
Moreover,
\begin{align}
\label{eq:disp_bound}
nW_n = \sum_i |d_i|^2 + n\sum_i \int_{A_i} |\df_{X_i}(y) - d_i|^2 \dd \mu (y), \qquad \sum_i |d_i|^2 \le nW_n. 
\end{align}
\end{lemma}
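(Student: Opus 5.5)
The plan is to use the semi-discrete dual (Kantorovich) formulation together with an envelope-theorem argument. For fixed points $x=(x_1,\dots,x_n)$ we have
\[
W_{n,d}(x)=\max_{\psi\in\R^n}\Phi(x,\psi),\qquad
\Phi(x,\psi):=\frac1n\sum_i\psi_i+\int_{\T^d}\min_j\bigl(d_{\T^d}(x_j,y)^2-\psi_j\bigr)\dd\mu(y).
\]
The maximizer $\psi^*$ determines the Laguerre cells $A_i$, which have mass $1/n$. For each fixed $y$, the function $x_j\mapsto d_{\T^d}(x_j,y)^2$ is Lipschitz on $\T^d$ with constant at most $2\operatorname{diam}(\T^d)=\sqrt d$. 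A minimum of Lipschitz functions is Lipschitz, so every $\Phi(\cdot,\psi)$ is Lipschitz in $x$ with a constant independent of $\psi$. A supremum of uniformly Lipschitz functions is Lipschitz, which gives the Lipschitz claim. Rademacher's theorem then gives differentiability almost everywhere.

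To compute the gradient, fix a configuration with distinct points at which $W_{n,d}$ is differentiable, and perturb $x_i\mapsto x_i+h$ while keeping $\psi^*$ fixed. Since $W_{n,d}(x+h e_i)\ge\Phi(x+he_i,\psi^*)$ with equality at $h=0$, the directional derivatives of $W_{n,d}$ must agree with those of $\Phi(\cdot,\psi^*)$ wherever the latter is differentiable in $x_i$. Bounding the minimum by the $j$-th term for $y\in A_j$ gives
\[
\Phi(x+he_i,\psi^*)\le\frac1n\sum_j\psi^*_j+\sum_j\int_{A_j}\bigl(d(x_j+h\delta_{ij},y)^2-\psi^*_j\bigr)\dd\mu .
\]
The right-hand side equals $\Phi$ at $h=0$. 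We also have $\nabla_{x}\,d_{\T^d}(x,y)^2=2\df_x(y)$ away from the cut locus of $x$, which is $\mu$-null. Differentiating under the integral by dominated convergence shows that the upper bound has gradient $2\int_{A_i}\df_{x_i}(y)\dd\mu(y)=2d_i/n$.

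Combining the two comparisons: $W_{n,d}(x+he_i)\ge\Phi(x+he_i,\psi^*)$, and $W_{n,d}$ is differentiable, so its gradient is pinned between differentiable lower bounds touching at $h=0$. To close the argument I would instead use the primal problem. Moving $X_i$ while keeping the same coupling (cell $A_i$ sent to $X_i+h$) gives a competitor whose cost is differentiable in $h$ with gradient $2d_i/n$. Hence $W_{n,d}(x+he_i)\le C(h)$ with $C(0)=W_{n,d}(x)$. At a differentiability point, a differentiable function lying below another differentiable function that touches it at $h=0$ must have the same gradient there. This proves \eqref{eq:derivative_identity}. The main technical point is justifying differentiation of $C(h)$ under the integral, because of the cut locus. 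This follows since for each $y$ outside a null set the map $h\mapsto d(x_i+h,y)^2$ is smooth near $0$, and its difference quotients are bounded by the Lipschitz constant.

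For \eqref{eq:disp_bound}, use optimality of the coupling, $W_{n,d}=\sum_i\int_{A_i}|\df_{X_i}(y)|^2\dd\mu$. Each cell $A_i$ has mass $1/n$, so under the normalized measure $n\,\mu|_{A_i}$ the vector $d_i$ is exactly the mean of $\df_{X_i}$. The bias–variance identity then gives
\[
n\int_{A_i}|\df_{X_i}|^2\dd\mu=|d_i|^2+n\int_{A_i}|\df_{X_i}-d_i|^2\dd\mu .
\]
Summing over $i$ yields the identity, and dropping the nonnegative variance terms gives $\sum_i|d_i|^2\le nW_n$.
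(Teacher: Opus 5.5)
Your proof is correct, and you handle the identity \eqref{eq:disp_bound} exactly as the paper does. The difference lies in how the gradient is identified.

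The paper works only with the primal problem and proves a two-sided sandwich $\int g_t q_i^t\dd\mu\le t^{-1}\bigl(W_n(X_i+tv)-W_n(X_i)\bigr)\le\int g_t q_i^0\dd\mu$. Its lower half requires stability of the optimizers when an atom moves ($q^t\to q^0$ weak-$*$, via compactness, stability of optimal plans and McCann's uniqueness). Your frozen-coupling competitor $C(h)$ is precisely the upper half of that sandwich. You drop the lower half and instead invoke Rademacher first: at a differentiability point, the nonnegative function $C(h)-W_{n,d}(x+he_i)$ vanishes at $h=0$, which forces the derivatives to agree. This is more elementary, since no continuity of optimal plans in the data is needed. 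The cost is that it identifies the gradient only at points already known to be differentiable, which is all the statement asserts.

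Your dual Lipschitz argument is also fine. It yields the constant $\sqrt d$ in the max-metric rather than the paper's sharper $n^{-1}\sum_i d_{\T^d}(X_i,X_i')$, but your competitor $C$ would give the sharper bound just as directly.

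The sentence about the gradient being \emph{pinned between differentiable lower bounds} is not a valid step as written, though you discard it. The dual bound can in fact be completed. For distinct points, the argmin in $\min_j\bigl(d_{\T^d}(x_j,y)^2-\psi^*_j\bigr)$ is unique for $\mu$-a.e.\ $y$. Dominated convergence then shows $h\mapsto\Phi(x+he_i,\psi^*)$ is differentiable at $0$ with gradient $2d_i/n$. Together with $C(h)$ this gives a genuine two-sided sandwich, hence differentiability at every configuration of distinct points, without Rademacher and without the paper's weak-$*$ stability step.
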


\begin{proof} The first idea of the proof is to rewrite any coupling in a disintegrated form, i.e., 
 $\pi(\dd x, \dd y)= \sum_i \delta_{X_i} (\dd x) q_i(y) \mu(\dd y)$. To that end, let $Q$ be the weak-$*$ compact subset of $(L^\infty(\mu))^n$ consisting of measurable vectors $q = (q_1, \ldots, q_n)$ satisfying
\[
q_i \ge 0, \qquad \sum_i q_i = 1 \ \ \mu\text{-a.e.}, \qquad \int q_i\, \dd \mu = \frac{1}{n},
\]
where we silenced the fact that each $q_i$ is a function of $y$.
Then
\[
W_n(X_1, \ldots, X_n) = \inf_{q \in Q} \sum_i \int d_{\mathbb{T}^d}(X_i, y)^2 q_i(y)\, \dd \mu(y).
\]
The differentiability statement will follow by a version of the envelope theorem, that we now develop.
The cost $d_{\mathbb{T}^d}(x,y)^2$ is uniformly Lipschitz in $x$. Thus,  evaluating the cost for a different point configuration at an optimizer $q^0$  gives 
\begin{align*}
W_n(X_1, \ldots , X_n) - W_n(X_1', \ldots, X_n') &\le \sum_i \int  \big ( d_{\mathbb{T}^d}(X_i, y)^2 - d_{\mathbb{T}^d}(X_i', y)^2\big)q_i^0(y)\, \dd \mu(y) \\
& \lesssim n^{-1}\sum_{i=1}^n d_{\mathbb{T}^d}(X_i, X_i'),
\end{align*}
 which shows that $W_{n,d}$ is Lipschitz.

We next consider perturbing a single $X_i$. We fix a reference set of sample points. As above, let $q^0$ be the optimizer for the initial set of points; it is further unique up to a $\mu$ null set. Denote by $q^t$ the optimizer for the same set of points with $X_i$ replaced by $X_i +tv $. By weak-$*$ compactness of $Q$, \cite[Theorem~5.20]{Vil09} and uniqueness of the transport map owing to McCann's theorem \cite{McC01}, $q^t \to q^0$ weak-$*$.

For $t \ne 0$, set
\[
g_t(y) := \frac{d_{\mathbb{T}^d}(X_i + tv, y)^2 - d_{\mathbb{T}^d}(X_i, y)^2}{t}.
\]
For $t > 0$, optimality at the two point clouds gives the elementary sandwich
\[
\int g_t(y) q_i^t(y)\, \dd \mu(y) \le \frac{W_n(X_i + tv) - W_n(X_i)}{t} \le \int g_t(y) q_i^0(y)\, \dd \mu(y).
\]
The same comparison, with the inequalities reversed, applies as $t \uparrow 0$. Away from the cut locus, $g_t(y) \to 2\mathfrak{d}_{X_i}(y) \cdot v$. The functions $g_t$ are uniformly bounded and the cut locus has $\mu$-measure zero, so dominated convergence gives this convergence in $L^1(\mu)$. Combining it with $q^t \to q^0$ weak-$*$ shows that both outer integrals have the same limit. Therefore
\[
\left.\frac{d}{dt}\right|_{t=0} W_n(X_1, \ldots, X_i + tv, \ldots, X_n) = 2\int_{A_i} \mathfrak{d}_{X_i}(y) \cdot v\, \dd \mu(y) = \frac{2}{n} d_i \cdot v.
\]
This proves \eqref{eq:derivative_identity} at every differentiability point, hence almost everywhere by Rademacher's theorem. Since $d_i$ is the mean of $\mathfrak{d}_{X_i}$ under the probability measure $n\mathbf{1}_{A_i}m$, the variance identity gives
\[
n\int_{A_i} |\mathfrak{d}_{X_i}(y)|^2\, \dd \mu(y) = |d_i|^2 + n\int_{A_i} |\mathfrak{d}_{X_i}(y) - d_i|^2\, \dd \mu(y).
\]
Summing in $i$ proves the identity and inequality in \eqref{eq:disp_bound}. Equality in the inequality would require the displacement to be constant almost everywhere on every transport cell.
\end{proof}

Let us also define the vector measure 
\begin{equation}
\label{eq: jVecMeas}
\mathfrak{j}:=\frac1n\sum_i d_i\,\delta_{X_i},
\end{equation}
where $\delta_{X_i}$ is
the Dirac mass at $X_i$.  This measure will be compared with $-\nabla u$ throughout Section
\ref{sec:kernel}. It is an $\R^d$-valued measure on $\T^d$, acting on a continuous vector field
$\varphi:\T^d\to\R^d$ by
\[
  \ip{\mathfrak{j}}{\varphi}=\frac1n\sum_i d_i\cdot\varphi(X_i).
\]

\subsection{Recovering one Hoeffding order from a covariance}
\label{sec:covariance}

The explicit formula for the Hoeffding projection in Appendix \ref{app:hoeffding} is difficult to estimate directly in this case. Instead, we test $W_{n,d}$ against the $U_a$ defined below, which belongs to the $r$-th Hoeffding subspace. By Hoeffding orthogonality, their covariance with $W_{n,d}$ depends only on $P_rW_{n,d}$. The response functional $\mathcal R_r(a)$ defined below collects these
covariances, and its dual norm then determines $\Var(P_rW_{n,d})$.

Let $a:(\T^d)^r\to\R$ be smooth, symmetric, and canonical, and first define
\begin{align*}
    U_a
    :=
    \sum_{\substack{i_1,\ldots,i_r\in[n]\\
    \text{pairwise distinct}}}
    a(X_{i_1},\ldots,X_{i_r}).
\end{align*}
Additionally, set
\begin{align*}
    g
    :=
    \nabla_1(-\Delta_1)^{-1}a, \qquad 
    G_i^{\#}(x)
    :=
    \sum_{\substack{i_2,\ldots,i_r\in[n]\setminus\{i\}\\
    \text{pairwise distinct}}}
    g(x,X_{i_2},\ldots,X_{i_r}),
\end{align*}
where $\nabla_1$ and $\Delta_1$ act only on the first variable. Define the response functional
\begin{align*}
    \mathcal R_r(a)
    :=
    \Cov(W_{n,d},U_a)
\end{align*}
and its dual norm by
\begin{align*}
    \norm{\mathcal R_r}_*
    :=
    \sup_{\substack{a\text{ symmetric and canonical}\\
    \norm{a}_2=1}}
    \abs{\mathcal R_r(a)}.
\end{align*}

\begin{lemma}\label{lem:order-r-response}
For every smooth, symmetric, and canonical $a$,
\begin{align}\label{eq:order-r-response}
    \mathcal R_r(a)
    =
    \frac{2}{n}\sum_{i=1}^n
    \E\left[
        d_i\cdot G_i^{\#}(X_i)
    \right].
\end{align}
Moreover,
\begin{align}\label{eq:order-r-duality}
    \Var(P_rW_{n,d})
    =
    \frac{\norm{\mathcal R_r}_*^2}{r!(n)_r}.
\end{align}
\end{lemma}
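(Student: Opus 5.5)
The plan is to prove the two claims separately: \eqref{eq:order-r-response} by integrating by parts in one sample point, and \eqref{eq:order-r-duality} by Hoeffding orthogonality and Cauchy--Schwarz.

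\textbf{Step 1: the response formula.} Since $a$ is canonical, every summand of $U_a$ has mean zero, so $\E U_a=0$ and $\mathcal R_r(a)=\E[W_{n,d}U_a]$.

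Each ordered tuple of pairwise distinct indices $(i_1,\dots,i_r)$ can be grouped by its first index $i=i_1$. This gives
\[
U_a=\sum_{i=1}^n\ \sum_{\substack{i_2,\ldots,i_r\in[n]\setminus\{i\}\\ \text{pairwise distinct}}} a(X_i,X_{i_2},\dots,X_{i_r}).
\]
Canonicity in the first variable means $a$ has no zero Fourier mode in $x_1$. Hence $(-\Delta_1)^{-1}a$ is well defined and smooth, and $a=-\operatorname{div}_1 g$ with $g=\nabla_1(-\Delta_1)^{-1}a$. Summing over $i_2,\dots,i_r$ gives $\sum_{i_2,\ldots}a(x,X_{i_2},\ldots)=-\operatorname{div}G_i^{\#}(x)$.

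Now condition on $(X_j)_{j\neq i}$ and integrate over $X_i=x$ against $\mu$. By Lemma~\ref{lem:gradient}, $x\mapsto W_{n,d}$ is Lipschitz, and $G_i^{\#}$ is smooth. Integration by parts on $\T^d$, which has no boundary terms, then gives
\[
\int W_{n,d}\,(-\operatorname{div}G_i^{\#})(x)\dd\mu(x)=\int \nabla_{X_i}W_{n,d}\cdot G_i^{\#}(x)\dd\mu(x).
\]
Inserting $\nabla_{X_i}W_{n,d}=2d_i/n$ from \eqref{eq:derivative_identity}, taking expectations, and summing over $i$ yields \eqref{eq:order-r-response}.

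\textbf{Step 2: the duality formula.} By exchangeability, $P_rW_{n,d}=\sum_{|S|=r}h(X_S)$ for a single symmetric canonical kernel $h\in L^2(\mu^{\otimes r})$. $U_a$ lies in the $r$-th Hoeffding subspace, so orthogonality gives $\mathcal R_r(a)=\E[P_rW_{n,d}\,U_a]$.

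Expanding the product, only pairs with the same index set $S$ survive, by canonicity. Each of the $\binom nr$ sets contributes $r!$ ordered tuples, each with value $\ip{h}{a}$. So
\[
\mathcal R_r(a)=(n)_r\ip{h}{a}.
\]
By contrast, $\Var(P_rW_{n,d})=\binom nr\norm{h}_2^2$.

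Taking the supremum over $\norm a_2=1$ gives $\norm{\mathcal R_r}_*=(n)_r\norm h_2$. The bound $\le$ is Cauchy--Schwarz. The matching bound is approached by smooth symmetric canonical approximants of $h/\norm h_2$, for instance Fourier truncations, which preserve symmetry and canonicity as noted after Definition~\ref{def:Canonical function}. Squaring and dividing by $r!(n)_r$ gives
\[
\frac{(n)_r^2\norm h_2^2}{r!(n)_r}=\binom nr\norm h_2^2=\Var(P_rW_{n,d}),
\]
which is \eqref{eq:order-r-duality}.

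\textbf{Main obstacle.} The delicate points are technical. In Step~1, integrating by parts against a function that is only Lipschitz needs the almost-everywhere gradient formula of Lemma~\ref{lem:gradient} together with Fubini. In Step~2, the supremum over smooth $a$ must equal the supremum over all of $L^2$, which requires the density of smooth symmetric canonical kernels. Both are handled by the observations above.
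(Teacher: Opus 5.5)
Your proposal is correct and follows essentially the same route as the paper. In both, Step 1 integrates by parts in the first sample variable using $a=-\operatorname{div}_1 g$ together with the a.e.\ identity $\nabla_{X_i}W_{n,d}=2d_i/n$, and Step 2 uses Hoeffding orthogonality to get $\mathcal R_r(a)=(n)_r\langle h_r,a\rangle$ and $\Var(P_rW_{n,d})=\binom nr\norm{h_r}_2^2$. Your extra remarks (Fubini for integrating by parts against a Lipschitz function, and approximating $h_r/\norm{h_r}_2$ by smooth symmetric canonical kernels to reach the supremum) supply details the paper leaves implicit.
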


\begin{proof}
Since $a$ is canonical, it has zero mean in its first variable.
Thus, by the definition of $g$,
\[
    a
    =-\Delta_1(-\Delta_1)^{-1}a
    =-\operatorname{div}_1 g.
\]
For each ordered tuple $(i_1,\ldots,i_r)$ of distinct indices,
we integrate by parts in $X_{i_1}$, keeping all other sample
coordinates fixed. 
Importantly, periodicity eliminates the boundary term.

Since $\E U_a=0$, using \eqref{eq:derivative_identity} and then
grouping the ordered tuples according to their first index gives
\begin{align*}
    \Cov(W_{n,d},U_a)
    &=
    \sum_{\substack{i_1,\ldots,i_r\in[n]\\
                    \text{pairwise distinct}}}
    \E\left[
        W_{n,d}\,a(X_{i_1},\ldots,X_{i_r})
    \right]\\
    &=
    \sum_{\substack{i_1,\ldots,i_r\in[n]\\
                    \text{pairwise distinct}}}
    \E\left[
        \nabla_{X_{i_1}}W_{n,d}\cdot
        g(X_{i_1},\ldots,X_{i_r})
    \right]\\
    &=
    \frac{2}{n}
    \sum_{i=1}^n
    \sum_{\substack{i_2,\ldots,i_r\in[n]\setminus\{i\}\\
                    \text{pairwise distinct}}}
    \E\left[
        d_i\cdot g(X_i,X_{i_2},\ldots,X_{i_r})
    \right]\\
    &=
    \frac{2}{n}\sum_{i=1}^n
    \E\left[
        d_i\cdot G_i^{\#}(X_i)
    \right].
\end{align*}
The last equality
uses linearity of expectation and the definition of $G_i^{\#}$.
This proves \eqref{eq:order-r-response}. Let $h_r:=h_{r,W_{n,d}}$ be the order-$r$ Hoeffding kernel
defined in Appendix \ref{app:hoeffding}.
Hoeffding orthogonality gives
\begin{align*}
    \mathcal R_r(a)
    =
    (n)_r\langle h_r,a\rangle,\qquad 
    \Var(P_rW_{n,d})
    =
    \binom{n}{r}\norm{h_r}_2^2.
\end{align*}
Therefore,
\(   \norm{\mathcal R_r}_*
    =
    (n)_r\norm{h_r}_2,
\)
which proves \eqref{eq:order-r-duality}.
\end{proof}

We now give a simpler form for the second-order component of the Hoeffding expansion.
To that end, let $F=F(X_1,\ldots,X_n)\in L^2$ be real-valued, symmetric, and
jointly translation invariant. By Appendix \ref{app:hoeffding}, its second-order Hoeffding kernel has the
form $q(x-y)$:
\begin{align*}
    P_2F
    =
    \sum_{1\leq i<j\leq n} (\E[F\;|\; X_i,X_j] - \E[F])
    =
    \sum_{1\leq i<j\leq n}q(X_i-X_j).
\end{align*}

Note that $q$ is even and centered. Let $B\subset\Z^d\setminus\{0\}$ be symmetric under $k\mapsto-k$.
Recall $q_B$ is the Fourier projection of $q$ onto $B$, and set
\begin{align*}
    (P_2F)_B
    &:=
    \sum_{1\leq i<j\leq n}q_B(X_i-X_j),\\
    S_f
    &:=
    \sum_{i\neq j}f(X_i-X_j).
\end{align*}
The following lemma shows that estimating $(P_2F)_B$ reduces to bounding
$\Cov(F,S_f)$ uniformly over real, even functions $f$ with $\norm{f}_2=1$
and Fourier support in $B$. This allows us to estimate each frequency
band separately.
\begin{lemma}
\label{lem:pairwise-duality}
For every such $B$,
\begin{align}\label{eq:pairwise-duality}
    \Var\big((P_2F)_B\big)
    =
    \binom{n}{2}\norm{q_B}_2^2
    =
    \frac{1}{2n(n-1)}
    \sup_{\substack{\norm{f}_2=1\\
    f\text{ real and even}\\
    \supp\widehat f\subset B}}
    \abs{\Cov(F,S_f)}^2.
\end{align}
\end{lemma}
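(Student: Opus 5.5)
The plan is to mirror the proof of \eqref{eq:order-r-duality} in Lemma~\ref{lem:order-r-response}, but restricted to kernels of the form $f(x-y)$ with Fourier support in $B$.

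\textbf{Step 1: the first equality.} Fix real, even $f$ with $\supp\widehat f\subset B$. Since $0\notin B$, the kernel $(x,y)\mapsto f(x-y)$ is symmetric and canonical. Hence $S_f$ lies in the second Hoeffding subspace. Likewise $(P_2F)_B=\sum_{i<j}q_B(X_i-X_j)$ lies there, because $q_B$ is even (as $B$ and $q$ are symmetric under $k\mapsto -k$) and real. The variance identity for degenerate $U$-statistics of order two gives
\[
\Var\big((P_2F)_B\big)=\binom n2\norm{q_B}_2^2,
\]
since the pairwise terms are orthogonal and $\E[q_B(X_1-X_2)^2]=\norm{q_B}_2^2$ by translation invariance. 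This is the same computation as for $\Var(nU_{n,K})$ in the introduction.

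\textbf{Step 2: computing the covariance.} By Hoeffding orthogonality, $\Cov(F,S_f)=\Cov(P_2F,S_f)$. Expanding both sums, only coinciding unordered pairs contribute, each pair appearing twice in $S_f$. So
\[
\Cov(F,S_f)=2\binom n2\,\E\big[q(X_1-X_2)f(X_1-X_2)\big]=n(n-1)\langle q,f\rangle .
\]
Because $\widehat f$ is supported in $B$, Parseval gives $\langle q,f\rangle=\langle q_B,f\rangle$. The function $q_B$ is real, so the inner product is real.

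\textbf{Step 3: optimizing over $f$.} Cauchy--Schwarz yields $\abs{\Cov(F,S_f)}\le n(n-1)\norm{q_B}_2$ whenever $\norm f_2=1$. If $q_B\neq0$, the choice $f=q_B/\norm{q_B}_2$ is admissible (real, even, supported in $B$) and attains equality. If $q_B=0$, both sides vanish. Therefore
\[
\sup\abs{\Cov(F,S_f)}^2=n^2(n-1)^2\norm{q_B}_2^2,
\]
and dividing by $2n(n-1)$ gives $\binom n2\norm{q_B}_2^2$, which is the claim.

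\textbf{Main obstacle.} The only delicate point is Step 2, and it is mild. One must justify $\Cov(F,S_f)=\Cov(P_2F,S_f)$ with $f$ merely in $L^2$ rather than smooth. This holds because $S_f\in L^2$ belongs to the second Hoeffding subspace and $F\in L^2$. One also needs the convolution form $q(x-y)$ of the order-two kernel, which is the Appendix fact already quoted before the lemma.
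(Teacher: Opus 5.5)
Your proof is correct and follows the paper's argument: orthogonality of distinct unordered pairs gives the variance identity, and Hoeffding orthogonality reduces $\Cov(F,S_f)$ to $n(n-1)\langle q_B,f\rangle$, after which one takes the supremum. Your Step 3 makes explicit what the paper leaves implicit in ``taking the supremum'': Cauchy--Schwarz is attained at the admissible test function $f=q_B/\norm{q_B}_2$, which is real and even because $q$ is and $B=-B$, and the case $q_B=0$ is trivial.
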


\begin{proof}
Since $q_B(x-y)$ is canonical in each variable, terms indexed by
different unordered pairs are orthogonal. Hence
\begin{align*}
    \Var\big((P_2F)_B\big)
    =
    \binom{n}{2}\norm{q_B}_2^2.
\end{align*}
Similarly, $f(x-y)$ is canonical in each variable. Therefore,
\begin{align*}
    \Cov(F,S_f)
    =
    \Cov(P_2F,S_f)
    =
    n(n-1)\langle q,f\rangle.
\end{align*}
Since $\supp\widehat f\subset B$, $\langle q,f\rangle = \langle q_B,f\rangle$.
Taking the supremum over all admissible $f$ with $\norm{f}_2=1$
proves \eqref{eq:pairwise-duality}.
\end{proof}

The two identities have different roles below:
\eqref{eq:order-r-duality} controls the higher-order terms, while
\eqref{eq:pairwise-duality} identifies the second-order projection.

\subsection{Frequency cutoffs and the product Poincar\'e inequality}
\label{ssec:cutoff}

For a function $F:(\T^d)^n\to\R$, define its product spectral projections by
\begin{align*}
    \widehat{\Pi_{\leq\Lambda}F}(k_1,\ldots,k_n)
    &:=
    \mathbf 1_{\{\lambda_{k_1}+\cdots+\lambda_{k_n}\leq\Lambda\}}
    \widehat F(k_1,\ldots,k_n),\\
    \Pi_{>\Lambda}F
    &:=
    F-\Pi_{\leq\Lambda}F.
\end{align*}
The following lemma bounds the variance carried by the high-frequency
spectral terms.

\begin{lemma}\label{lem:high-frequency}
Let $\Lambda>0$ and let
$F\in H^1((\T^d)^n)$ be centered. Then
\begin{align}
    \Lambda\norm{\Pi_{>\Lambda}F}_2^2
    \leq
    \sum_{i=1}^n\norm{\nabla_{X_i}F}_2^2.
\end{align}
For the quadratic transport cost,
\begin{align}
    \sum_{i=1}^n
    \E\abs{\nabla_{X_i}W_{n,d}}^2
    \leq
    \frac{4}{n}\E W_{n,d}
    \leq
    \begin{cases}
        C(\log n)n^{-2},&d=2,\\
        Cn^{-5/3},&d=3,\\
        Cn^{-3/2},&d=4.
    \end{cases}
\end{align}
\end{lemma}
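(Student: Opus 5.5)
The first inequality is a product-space Poincaré inequality, and I would prove it by expanding in the multivariate Fourier basis on $(\T^d)^n$. For $F\in H^1((\T^d)^n)$ write $F=\sum_{\mathbf k}\widehat F(\mathbf k)e_{k_1}(x_1)\cdots e_{k_n}(x_n)$ with $\mathbf k=(k_1,\dots,k_n)$. Parseval then gives
\[
\sum_i\norm{\nabla_{X_i}F}_2^2=\sum_{\mathbf k}(\lambda_{k_1}+\cdots+\lambda_{k_n})\abs{\widehat F(\mathbf k)}^2 .
\]
By definition, $\Pi_{>\Lambda}F$ retains exactly the modes with $\lambda_{k_1}+\cdots+\lambda_{k_n}>\Lambda$. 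Dropping the nonnegative contribution of the other modes therefore gives
\[
\sum_i\norm{\nabla_{X_i}F}_2^2\ge\Lambda\sum_{\lambda_{k_1}+\cdots+\lambda_{k_n}>\Lambda}\abs{\widehat F(\mathbf k)}^2=\Lambda\norm{\Pi_{>\Lambda}F}_2^2 .
\]
Centering is not really needed here. It only guarantees that the zero mode, which has total eigenvalue $0\le\Lambda$, carries no mass anyway. To apply this to $W_{n,d}$, I would use that $W_{n,d}$ is Lipschitz by Lemma~\ref{lem:gradient}, hence lies in $W^{1,\infty}\subset H^1$. Its weak gradient agrees a.e.\ with the pointwise gradient $2d_i/n$ given by \eqref{eq:derivative_identity}.

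For the gradient bound, \eqref{eq:derivative_identity} and the inequality $\sum_i\abs{d_i}^2\le nW_{n,d}$ from \eqref{eq:disp_bound} give, pointwise almost everywhere,
\[
\sum_i\abs{\nabla_{X_i}W_{n,d}}^2=\frac4{n^2}\sum_i\abs{d_i}^2\le\frac4n W_{n,d}.
\]
Taking expectations yields the middle inequality.

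What remains is the known order of the mean cost, $\E W_{n,d}\lesssim n^{-2/d}$ for $d\ge3$ and $\E W_{n,2}\lesssim(\log n)/n$. In $d=2$ this follows from Ambrosio--Stra--Trevisan \cite{AST19}. In $d=3,4$ it follows from Goldman--Trevisan \cite{GT21}, transferred from the cube to the torus, or more directly from the Fourier bound of Bobkov--Ledoux \cite{BL21}, $\E W_2^2(\mu_n,\mu)\lesssim\inf_t\bigl(t+\sum_{k\ne0}e^{-t\lambda_k}/(n\lambda_k)\bigr)$. Choosing $t\sim n^{-2/d}$ balances the two terms and gives $n^{-2/d}$ for $d\ge3$, and $(\log n)/n$ for $d=2$. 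Multiplying by $4/n$ gives $n^{-2}\log n$, $n^{-5/3}$ and $n^{-3/2}$ for $d=2,3,4$ respectively.

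The main obstacle is only this citation step. The smoothing bound of \cite{BL21} must be stated for the torus, with the smoothing-error term $W_2^2(\mu_n,\mu_n^t)\le dt$ handled via the heat semigroup. If needed, I would reproduce this short argument rather than rely on the cube results.
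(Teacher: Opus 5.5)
Your proposal is correct and follows the paper's proof. It uses Parseval on $(\T^d)^n$ for the spectral Poincar\'e bound, then $\nabla_{X_i}W_{n,d}=2d_i/n$ together with $\sum_i\abs{d_i}^2\le nW_{n,d}$ from \eqref{eq:disp_bound}, and finally a known bound on $\E W_{n,d}$.

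The differences are minor. The paper cites \cite{MBNW24} for the mean-cost orders, whereas you invoke \cite{AST19}, \cite{GT21}, or a Bobkov--Ledoux heat-smoothing argument, which does give the stated orders. You also state explicitly that $W_{n,d}$ is Lipschitz, hence in $H^1$, which the paper uses without comment.
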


\begin{proof}
By Parseval's identity,
\begin{align*}
    \Lambda\norm{\Pi_{>\Lambda}F}_2^2
    &=
    \Lambda
    \sum_{\lambda_{k_1}+\cdots+\lambda_{k_n}>\Lambda}
    \abs{\widehat F(k_1,\ldots,k_n)}^2\\
    &\leq
    \sum_{k_1,\ldots,k_n}
    \left(\lambda_{k_1}+\cdots+\lambda_{k_n}\right)
    \abs{\widehat F(k_1,\ldots,k_n)}^2\\
    &=
    \sum_{i=1}^n\norm{\nabla_{X_i}F}_2^2.
\end{align*}

For $W_{n,d}$, by \eqref{eq:derivative_identity} and \eqref{eq:disp_bound}, we have
\begin{align*}
    \sum_{i=1}^n\E\abs{\nabla_{X_i}W_{n,d}}^2
    =
    \frac{4}{n^2}\E\sum_{i=1}^n\abs{d_i}^2
    \leq
    \frac{4}{n}\E W_{n,d}.
\end{align*}
The result now follows from the fact \cite[equation~(23) and discussion following]{MBNW24} that
\begin{align*}
    \E W_{n,d}
    \leq
    \begin{cases}
        C(\log n)n^{-1},&d=2,\\
        Cn^{-2/3},&d=3,\\
        Cn^{-1/2},&d=4.
    \end{cases} &\qedhere
\end{align*}
\end{proof}

%------------------------------------------------
%                 Section Order-2 kernels
%------------------------------------------------
\section{An equivalence with order-2 kernels}
\label{sec:kernel}

The objective of this section is to replace $n(W_{n,d}-\E W_{n,d})$ by an asymptotically equivalent, explicit quadratic statistic. This replacement depends on a comparison between two quantities. On the one hand there is the
current (how the mass moves) actually produced by the optimal allocation; on the other, the current   predicted by the
linear model in which the sample is viewed as a small perturbation of the uniform density and
the transport is the gradient of a Poisson potential, recall the hand-wavy computation of Section~\ref{ssec:intuition}.
 
We organize this section as follows:  Sections \ref{ssec:pair23}--\ref{ssec:higher23} focus on the case $d\le3$ and
Sections \ref{ssec:stress4}--\ref{ssec:agg4} address the critical case $d=4$.

\subsection{The displacement input}
\label{ssec:GH}

Throughout this subsection $d\in\{2,3\}$ and $L=n^{1/d}$, so that $L^{-1}$ is the typical distance
between neighbouring sample points. 
 
 The proposition below says that, once both are
averaged at a spatial scale $s$, the two agree up to a quantitatively small error. Its proof is a variation of the estimates of Goldman and Huesmann \cite{GH22}. These are stated on a torus of side $L$
on which lives a sample of unit intensity, whereas we work on the unit torus with $n$ points, and the two
normalisations differ by explicit powers of $L$.

We first need to introduce an averaging kernel. It will be used to perform the regularization needed when establishing the various estimates and comparisons between actual transport and the linearized approximation of it via the Poisson equation. Let $B_1$ be the unit ball in $\mathbb{R}^d$.
We fix, once and for all, a nonnegative radial $\eta\in C^\infty_c(B_1)$,   with
\begin{equation}\label{eq:mollifier}
  \int_{\R^d}\eta=1,\qquad \inf_{\abs\xi\le2}\bigl|\widehat\eta_{\R^d}(\xi)\bigr|\ge\frac12,
  \qquad \widehat\eta_{\R^d}(\xi)=\int_{\R^d}\eta(z)e^{-2\pi i\xi\cdot z}\dd z .
\end{equation}
Such a function exists: start from any nonnegative radial smooth bump of unit mass and rescale it by
a small enough fixed factor, which spreads out its Fourier transform until the lower bound holds on
$\abs\xi\le2$.
 
The parameter $s$ below is the wavelength of the frequency band on which the above kernel will be tested
in Section \ref{ssec:pair23}. The averaging of the kernel will be performed at the proportional length
\begin{equation}\label{eq:ells}
  \ell_s:=\max\{L^{-1},\,c_{\rm GH}s\},\qquad L^{-1}\le s\le1,
\end{equation}
where $c_{\rm GH}\in(0,1)$ is a fixed constant, chosen small enough that the estimates of
\cite{GH22} apply at every physical radius $R$ with $1\le R\le c_{\rm GH}L$.  Observe that the hypothesis
$L\gg R$ of that paper is a smallness condition on the fixed ratio $R/L$
\cite[p.~1448, footnote~1]{GH22}. Thus $\ell_s$ is a fixed multiple of $s$, floored at the sample
spacing $L^{-1}$: below that length there is nothing left to average. 
 
For $x\in\T^d$ choose any lift $\tilde x\in\R^d$ and set
\begin{equation}\label{eq:periodised}
  \eta_{s}(x):=\sum_{z\in\Z^d}\ell_s^{-d}\,\eta\Bigl(\frac{\tilde x+z}{\ell_s}\Bigr);
\end{equation}
changing the lift only reindexes this finite sum, so $\eta_s$ is well defined on $\T^d$.
 
We now introduce four different fields that will be relevant for the proof. They correspond to smoothed quantities or  objects capturing the first order behavior of the transport.  Recall the displacement measure $\mathfrak{j}=\frac1n\sum_{i}d_i\delta_{X_i}$ of Section \ref{ssec:derivative}.
Define
\begin{align}
  \rho_{s}&:=\eta_{s}*\mu_n=\frac1n\sum_{i=1}^n\eta_{s}(\cdot-X_i)
    &&\text{(the sample density, smoothed at scale $s$),}\label{eq:rho}\\
  u_{s}&:\ \Delta u_{s}=\rho_{s}-1,\quad \textstyle\int_{\T^d}u_{s}\dd\mu=0
    &&\text{(the potential predicted by the linear model),}\label{eq:u}\\
  J_{s}&:=\eta_{s}*\mathfrak{j}=\frac1n\sum_{i=1}^n\eta_{s}(\cdot-X_i)\,d_i
    &&\text{(the true current, smoothed at scale $s$),}\label{eq:J}\\
  E_{s}&:=J_{s}+\rho_{s}\nabla u_{s}
    &&\text{(the discrepancy between the two).}\label{eq:E}
\end{align}

\begin{remark}\label{displacement_remark} A few comments are in order.
First, \eqref{eq:J} admits the equivalent formulation 
\[
J_s(x)=\int_{\T^d\times\T^d}\eta_s(x-z)\, \mathfrak{d}_z(y)\dd\pi_{n}(z,y),
\] for $\pi_n$ the
 optimal plan of Section \ref{ssec:derivative};  recall that the empirical measure is its first marginal. 
 
 Second, note that the  sign in \eqref{eq:E} is the
one appropriate to our orientation: our displacements point from uniform point $y$ to the atom $x$, so
the linear prediction for $J_s$ is $-\rho_s\nabla u_s$, and $E_s$ is small exactly when the
allocation is well described by the Poisson equation \eqref{eq:u} at scale $s$.
\end{remark}

Each of the fields \eqref{eq:rho}--\eqref{eq:E} depends on the sample as well as on the point of
the torus at which it is evaluated. We write $V=V(\omega,x)$ with $\omega=(X_1,\dots,X_n)$ the
sample and $x\in\T^d$, and measure such fields in the joint norm of
$L^p\bigl(\mu^{\otimes n}\otimes\mu\bigr)$:
\[
  \norm{V(\omega,\cdot)}_{L^p_x}:=\Bigl(\int_{\T^d}\abs{V(\omega,x)}^p\dd\mu(x)\Bigr)^{1/p},
  \qquad
  \norm{V}_{L^p_{\omega,x}}:=\bigl(\E\norm{V(\omega,\cdot)}^p_{L^p_x}\bigr)^{1/p}.
\]

\begin{proposition}[Displacement input]\label{prop:GH}
Let $d\in\{2,3\}$ and $L=n^{1/d}$. There is a constant $C<\infty$, depending only on $d$ and on the
fixed choices \eqref{eq:mollifier} and \eqref{eq:ells}, such that the following bounds hold for every
$s$ with $L^{-1}\le s\le1$. For $d=2$, one has
\begin{equation}\label{eq:GH2}
  \norm{E_{s}}_{L^2_{\omega,x}}\le C\,\frac{\log(2+Ls)}{L^2s},
  \qquad
  \norm{\rho_{s}-1}_{L^4_{\omega,x}}\le \frac{C}{Ls},
  \qquad
  \norm{\nabla u_{s}}_{L^4_{\omega,x}}\le \frac CL\sqrt{\log(e/s)} .
\end{equation}
For $d=3$, it holds that
\begin{equation}\label{eq:GH3}
  \norm{E_{s}}_{L^2_{\omega,x}}\le \frac{C}{L^2s},
  \qquad
  \norm{\rho_{s}-1}_{L^4_{\omega,x}}\le \frac{C}{(Ls)^{3/2}},
  \qquad
  \norm{\nabla u_{s}}_{L^4_{\omega,x}}\le \frac{C}{L^{3/2}s^{1/2}} .
\end{equation}
\end{proposition}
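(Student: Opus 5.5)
The plan is to treat the two ``linear'' bounds (on $\rho_s-1$ and $\nabla u_s$) by direct Fourier/moment computations, and to import the bound on $E_s$ from \cite{GH22} after rescaling.

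\textbf{Step 1: the density bound.} For $\rho_s-1=\frac1n\sum_i(\eta_s(\cdot-X_i)-1)$, fix $x$ and note that this is a normalized sum of i.i.d.\ centered variables $Y_i=\eta_s(x-X_i)-1$. Rosenthal's inequality gives
\[
\E|\rho_s(x)-1|^4\lesssim n^{-4}\bigl(n^2(\E Y^2)^2+n\,\E Y^4\bigr).
\]
Since $\E Y^2\lesssim\ell_s^{-d}$ and $\E Y^4\lesssim\ell_s^{-3d}$, and $n\ell_s^d\ge1$ because $\ell_s\ge L^{-1}$, we get
\[
\norm{\rho_s-1}_{L^4_{\omega,x}}\lesssim (n\ell_s^d)^{-1/2}=(L\ell_s)^{-d/2}.
\]
This gives $C(Ls)^{-1}$ for $d=2$ and $C(Ls)^{-3/2}$ for $d=3$, since $\ell_s\gtrsim s$.

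\textbf{Step 2: the potential bound.} Likewise,
\[
\nabla u_s(x)=\frac1n\sum_i \nabla G*\eta_s(x-X_i),
\]
again a sum of centered i.i.d.\ terms (indeed $\nabla u_s=-\nabla G*(\rho_s-1)$). Rosenthal reduces the $L^4$ bound to $n^{-1/2}\norm{\nabla G*\eta_s}_2$ plus a lower-order fourth-moment term. By Parseval,
\[
\norm{\nabla G*\eta_s}_2^2=\sum_{k\ne0}\frac{|\widehat\eta(\ell_sk)|^2}{\lambda_k}\approx\sum_{0<|k|\lesssim\ell_s^{-1}}|k|^{-2}.
\]
This is $\log(e/s)$ for $d=2$ and $\ell_s^{-1}\lesssim s^{-1}$ for $d=3$, which yields $L^{-1}\sqrt{\log(e/s)}$ and $L^{-3/2}s^{-1/2}$ respectively. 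The fourth-moment term is $n^{-3/4}\norm{\nabla G*\eta_s}_4$. Since $|\nabla G(z)|\lesssim|z|^{1-d}$, this norm is bounded by $\ell_s^{-(d-1)}$ times the $L^4$ mass on the ball (finite for $d=2$, and $\ell_s^{3/4-2}$ for $d=3$). Using $n\ell_s^d\ge1$ one checks it is dominated by the main term.

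\textbf{Step 3: the discrepancy bound (main obstacle).} Rescale the unit torus by $L$, so that the sample becomes a unit-intensity process on $\T^d_L$. Displacements then scale by $L$, and $\ell_s$ becomes the physical radius $R=L\ell_s\in[1,c_{\rm GH}L]$. In these coordinates $J_s$ and $\rho_s\nabla u_s$ become, up to the factor $L^{-1}$, the mollified flux of the optimal coupling and its linearization in \cite{GH22}. Their quantitative linearization estimate (the harmonic-approximation bound on the flux) gives an $L^2$ discrepancy of order $R^{-1}\log R$ in $d=2$ and $R^{-1}$ in $d=3$ in physical units. Undoing the scaling ($\times L^{-1}$) gives
\[
\frac{\log(2+Ls)}{L^2 s}\quad\text{and}\quad\frac{1}{L^2s}.
\]

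The difficulties are threefold. First, \cite{GH22} works with Poisson point processes and an OT-to-Lebesgue problem on a large torus, so I would de-Poissonize, e.g.\ by conditioning on the point count or via a coupling of the binomial and Poisson processes, with the bad events controlled using the Lipschitz bound of Lemma~\ref{lem:gradient}. Second, their estimates hold on a high-probability event; on the complement I would use the crude bounds $|d_i|\le\sqrt d$ together with $L^4$ control from Steps 1--2 and Hölder to keep the $L^2_\omega$ norm. Third, their statement concerns the unmollified-in-time flux $\int\mathfrak d\,d\pi$ near the origin, and stationarity transfers this to all $x$, matching the representation in Remark~\ref{displacement_remark}. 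The integrability on the bad event is where I expect the real work.
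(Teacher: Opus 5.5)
Your treatment of the key bound on $E_s$ follows the paper. You dilate by $L$ so that the averaging radius becomes $R=L\ell_s\in[1,c_{\rm GH}L]$. You then identify $-\widehat E_R(0)$ with the quantity $\int\eta_R(x)\bigl(y-x-\nabla u^{1,L}_R(0)\bigr)\dd\pi^{1,L}$ controlled in \cite[Prop.~4.8]{GH22}; here the density factor is $\int\eta_R\dd\mu^{1,L}=\widehat\rho_R(0)$, and the sign flips because GH22 use target-minus-atom displacements. Finally you pass to every point by stationarity, to the annealed spatial norm by Fubini, and undo the scaling.

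The genuine difference is in the two auxiliary bounds. The paper imports them from \cite[Lemmas~2.3 and 3.1]{GH22} after the same rescaling, while you compute them directly from Rosenthal's inequality and Parseval. Your computation is correct: $n\ell_s^d=(L\ell_s)^d\geq1$ makes the fourth-moment terms subordinate, and $\sum_{0<\abs{k}\lesssim\ell_s^{-1}}\abs{k}^{-2}$ produces $\log(e/s)$, resp.\ $s^{-1}$. It is self-contained and does not need the scale separation $R\leq c_{\rm GH}L$. It is the same device the paper uses in Lemma~\ref{lem:terminal-gradient} at terminal scales in $d=4$. Citing GH22 is merely shorter once that paper is invoked anyway.

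The difficulties you anticipate in Step~3 do not arise in the way the paper uses GH22. Their $\mu^{1,L}$ is already the counting measure of $n=L^d$ i.i.d.\ uniform points on $\T^d_L$, so there is nothing to de-Poissonize. Prop.~4.8 is an annealed second-moment bound, so no good/bad event splitting is needed on your side. Take $\log(1+R)/R$, not $\log R/R$, so the bound is meaningful at $R=1$.

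Note also that the patch you sketch would have failed had it been needed. The good event for harmonic approximation at scale $R$ is $\{r_*\leq R\}$, whose complement has probability decaying only in $R$, not in $L$. The trivial bound $\abs{d_i}\leq\sqrt d/2$ and H\"older give only $\norm{E_s\1_{\mathrm{bad}}}_{L^2_{\omega}}\lesssim\mathbb P(\mathrm{bad})^{1/4}$. At $s\approx L^{-1}$ (so $R\approx1$), however, the target is of order $L^{-1}$ while $\mathbb P(\mathrm{bad})$ is not small. Handling the exceptional event requires displacement bounds in terms of $r_*$, of the kind in Proposition~\ref{prop:radius-support} and Lemma~\ref{lem:segment-length}. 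That is precisely what is already built into the annealed estimate of GH22.
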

 
The first bound in each line is the one that matters: it says that the nonlinearity of the transport
problem is invisible at scale $s$ up to an error of size $L^{-2}s^{-1}$, with a logarithmic loss in
dimension two. The other two bounds are auxiliary, and control the density factor $\rho_s$ multiplying
the predicted gradient in \eqref{eq:E}; they are used only in the H\"older estimate of Section
\ref{ssec:pair23}.

\begin{proof}
The estimates of \cite{GH22} are stated on the torus $\T^d_L=(\R/L\Z)^d$ of side $L$, for a sample of
unit intensity, and at a physical averaging radius $R$. The proof consists in matching the two
normalisations.
 
\emph{Step 1: dilation.} Let $S_L(x)=Lx$ and dilate the optimal plan by
\begin{equation}\label{eq:dilate}
  \int F(x,y)\dd\widehat\pi_{n}(x,y):=n\int F(Lx,Ly)\dd\pi_{n}(x,y).
\end{equation}
The first marginal of $\widehat\pi_n$ is $\sum_i\delta_{LX_i}$, i.e., the counting measure of $n$
i.i.d.\ uniform points on the torus $\T^d_L$ of volume $L^d=n$ (one point per unit volume on
average, with typical spacing of order one). This is the measure denoted $\mu^{1,L}$ in
\cite[equation~(2.2)]{GH22}.
The second marginal is the Lebesgue measure. Set
\begin{equation}\label{eq:R}
  R:=L\ell_s=\max\{1,c_{\rm GH}Ls\},
\end{equation}
and let $\widehat\rho_R,\widehat u_R,\widehat J_R$ be the averaged density, Poisson potential and
displacement current built from $\widehat\pi_n$ at radius $R$ exactly as in \eqref{eq:rho}--\eqref{eq:J}.
Direct substitution gives, for every $x\in\T^d$,
\begin{equation}\label{eq:transfer}
  \widehat\rho_R(Lx)=\rho_{s}(x),\qquad
  \widehat J_R(Lx)=L\,J_{s}(x),\qquad
  \nabla\widehat u_R(Lx)=L\,\nabla u_{s}(x).
\end{equation}
Indeed, the factor $n=L^d$ in the dilated plan cancels
the factor $L^{-d}$ in the rescaled averaging kernel,
which gives the density identity. For the current,
there is an additional factor $L$ because each
displacement is multiplied by $L$. Finally, the
zero-mean solution of the rescaled Poisson equation is
$\widehat u_R(z)=L^2u_s(z/L)$, so the chain rule gives
$\nabla\widehat u_R(Lx)=L\nabla u_s(x)$.
 \medskip
 
\emph{Step 2: the estimates of \cite{GH22}.} We first record where our four fields occur in that
paper. With $R$ the averaging radius and $\eta_R$ the mollifier, their sample of unit intensity is
$\mu^{1,L}$, and
\[
  \mu^{1,L}_R=\eta_R*\mu^{1,L}=\widehat\rho_R,\qquad
  W^{1,L}_R=\mu^{1,L}_R-1=\widehat\rho_R-1,\qquad
  \nabla u^{1,L}_R=\nabla\widehat u_R,
\]
the last because $u^{1,L}$ is the mean-zero, periodic solution of
$\Delta u^{1,L}=\mu^{1,L}-1$. Their plan $\pi^{1,L}$ is oriented from the atoms (first coordinate
$x$) to Lebesgue (second coordinate $y$), and their displacement is $y-x$, target minus atom, so
that with our opposite convention, it holds that $\int\eta_R(x)(y-x)\dd\pi^{1,L}=-\widehat J_R(0)$.

The quantity estimated in \cite[Prop.~4.8]{GH22} is  exactly our
$\widehat E_R$. Indeed, the first marginal of $\pi^{1,L}$ is $\mu^{1,L}$, so
$\int\eta_R(x)\dd\pi^{1,L}(x,y)=\int\eta_R\dd\mu^{1,L}=\widehat\rho_R(0)$, whence
\begin{equation}\label{eq:key}
  \int\eta_R(x)\bigl(y-x-\nabla u^{1,L}_R(0)\bigr)\dd\pi^{1,L}
  =-\widehat J_R(0)-\widehat\rho_R(0)\,\nabla\widehat u_R(0)=-\widehat E_R(0).
\end{equation}
In particular, the density factor in $E_s$ is the mass
$\int\eta_R\dd\mu^{1,L}$ carried by the smoothing of the atomic marginal.

For $1\le R\le c_{\rm GH}L$ and every $z\in\T^d_L$, the cited estimates read
\begin{align}
  \E\bigl[\abs{\widehat J_R(z)+\widehat\rho_R(z)\nabla\widehat u_R(z)}^2\bigr]^{1/2}
    &\le\begin{cases} C\log(1+R)/R, & d=2,\\ C/R, & d=3,\end{cases}\label{eq:cite1}\\[2pt]
  \E\bigl[\abs{\widehat\rho_R(z)-1}^4\bigr]^{1/4}&\le CR^{-d/2},\label{eq:cite2}\\[2pt]
  \E\bigl[\abs{\nabla\widehat u_R(z)}^4\bigr]^{1/4}
    &\le\begin{cases} C\log^{1/2}(2L/R), & d=2,\\ CR^{-1/2}, & d=3,\end{cases}\label{eq:cite3}
\end{align}
the right-hand sides being independent of $z$. In \cite[Lemmas 2.3 and 3.1]{GH22} that uniformity
is part of the statement, which is made for every $x\in Q_L$. In \cite[Prop.~4.8]{GH22} the estimate
is stated at $z=0$ only, and holds at every $z$ because the mollified quantities
$\widehat\rho_R(\cdot),\nabla\widehat u_R(\cdot),\widehat J_R(\cdot)$ are stationary
\cite[Section~2.1]{GH22}. Thus, $\E\abs{\,\cdot\,(z)}^p$ does not depend on $z$.

Writing $\norm{F}_{L^p(\T^d_L)}:=\bigl(L^{-d}\int_{\T^d_L}\abs F^p\bigr)^{1/p}$ for the
\emph{normalised} spatial norm, Fubini turns each of \eqref{eq:cite1}--\eqref{eq:cite3} into the
same bound for the annealed spatial norm, with the same constant:
\[
  \E\norm{F}^p_{L^p(\T^d_L)}=\frac{1}{L^d}\int_{\T^d_L}\E\abs{F(z)}^p\dd z
  \;\le\;\sup_{z\in\T^d_L}\E\abs{F(z)}^p .
\]

\emph{Step 3: back to the unit torus.} Put $q:=Ls\ge1$. By \eqref{eq:R}, it holds that 
\begin{equation}\label{eq:Rq}
  c_{\rm GH}\,q\le R\le q,
  \qquad
  R^{-1}\le c_{\rm GH}^{-1}q^{-1},\qquad R^{-d/2}\le c_{\rm GH}^{-d/2}q^{-d/2} .
\end{equation}
In dimension two, $r\mapsto\log(1+r)/r$ is decreasing, so \eqref{eq:Rq} gives
$\log(1+R)/R\le C\log(2+q)/q$.  Since $R\ge c_{\rm GH}Ls$ we have $2L/R\le 2/(c_{\rm GH}s)$, so
$\sqrt{\log(2L/R)}\le C\sqrt{\log(e/s)}$. In dimension three, \eqref{eq:Rq} gives
$R^{-1/2}\le c_{\rm GH}^{-1/2}q^{-1/2}$. Combining these with \eqref{eq:transfer}, under which the
current and gradient norms gain a factor $L^{-1}$ while the density norm is unchanged, turns
\eqref{eq:cite1}--\eqref{eq:cite3} into \eqref{eq:GH2}--\eqref{eq:GH3}; for instance, in dimension
two,
\[
  \norm{E_{s}}_{L^2_{\omega,x}}
  =L^{-1}\Bigl(\E\bigl\|\widehat J_R+\widehat\rho_R\nabla\widehat u_R\bigr\|^2_{L^2(\T^d_L)}\Bigr)^{1/2}
  \le \frac{C}{L}\cdot\frac{\log(2+q)}{q}=C\,\frac{\log(2+Ls)}{L^2s}.
\]
Finally, the separated-scale condition $1\le R\le c_{\rm GH}L$ needed in Step 2 holds as soon as
$L\ge c_{\rm GH}^{-1}$, because $R=\max\{1,c_{\rm GH}Ls\}\le c_{\rm GH}L$ for $s\le1$, and $R\ge1$ by \eqref{eq:R}. The finitely many
smaller values of $L$ are absorbed into $C$, all six quantities being bounded there.
\end{proof}

In Section~\ref{ssec:pair23}, we shall express pairings
with the original current $\mathfrak{j}$ in terms of its smoothed
version $J_s=\eta_s*\mathfrak{j}$. This is possible because convolution
with $\eta_s$ has a uniformly bounded inverse on the
frequency band of the test field, as the following lemma shows. Let
\[
  A_s:=\bigl\{k\in\Z^d:\ s^{-1}\le\abs k < 2s^{-1}\bigr\}
\]
be the frequency band of wavelength $s$.
 
\begin{lemma}[The averaging is invertible on $A_s$]\label{lem:multiplier}
The Fourier multiplier of the periodised kernel \eqref{eq:periodised} is
$\widehat\eta_{s}(k)=\widehat\eta_{\R^d}(\ell_sk)$ for $k\in\Z^d$. Since $\ell_s\le s$ by
\eqref{eq:ells}, every $k\in A_s$ satisfies $\abs{\ell_sk}\le2$, whence
\begin{equation}\label{eq:multiplier}
  \bigl|\widehat\eta_{s}(k)\bigr|\ge\frac12,\qquad k\in A_s .
\end{equation}
Consequently, for every field $V$ with Fourier support in $A_s$ there is a unique field $\widetilde V$
with the same support such that $V=\eta_{s}*\widetilde V$, and
$\norm{\widetilde V}_{L^2_x}\le2\norm{V}_{L^2_x}$.
\end{lemma}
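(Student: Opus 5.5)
The plan is to compute the Fourier coefficients of the periodised kernel \eqref{eq:periodised} by unfolding the lattice sum, and then to invert the multiplier on $A_s$. For $k\in\Z^d$, identify $\T^d$ with the fundamental cell $[0,1)^d$ and write
\[
  \widehat\eta_s(k)=\int_{[0,1)^d}\sum_{z\in\Z^d}\ell_s^{-d}\eta\Bigl(\frac{x+z}{\ell_s}\Bigr)e^{-2\pi ik\cdot x}\dd x .
\]
Since $\eta$ is compactly supported, the sum is finite, so summation and integration may be exchanged. Because $e^{-2\pi ik\cdot(x+z)}=e^{-2\pi ik\cdot x}$ for $z\in\Z^d$, the translates $[0,1)^d+z$ tile $\R^d$ and the expression unfolds to
\[
  \int_{\R^d}\ell_s^{-d}\eta(y/\ell_s)e^{-2\pi ik\cdot y}\dd y .
\]
The substitution $y=\ell_s w$ then turns this into $\widehat\eta_{\R^d}(\ell_s k)$, as claimed.

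Next comes the lower bound \eqref{eq:multiplier}. By \eqref{eq:ells} we have $\ell_s=\max\{L^{-1},c_{\rm GH}s\}$. Since $c_{\rm GH}<1$, we get $c_{\rm GH}s\le s$, and $L^{-1}\le s$ by the standing assumption, so $\ell_s\le s$. For $k\in A_s$ we have $\abs{k}<2s^{-1}$, hence $\abs{\ell_sk}<2$. The normalisation \eqref{eq:mollifier} then gives $\abs{\widehat\eta_s(k)}\ge 1/2$.

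Finally, I construct the inverse. For a field $V$ (scalar or vector, coordinatewise) with $\supp\widehat V\subset A_s$, define $\widetilde V$ by
\[
  \widehat{\widetilde V}(k):=\widehat V(k)/\widehat\eta_s(k)\ \text{ for } k\in A_s,\qquad \widehat{\widetilde V}(k):=0 \ \text{ otherwise}.
\]
This is well defined by \eqref{eq:multiplier}. The convolution theorem on $\T^d$, namely $\widehat{\eta_s*\widetilde V}=\widehat\eta_s\widehat{\widetilde V}$, gives $\eta_s*\widetilde V=V$. Parseval then gives
\[
  \norm{\widetilde V}_{L^2_x}^2=\sum_{k\in A_s}\abs{\widehat V(k)}^2/\abs{\widehat\eta_s(k)}^2\le 4\norm{V}_{L^2_x}^2 .
\]
For uniqueness among fields supported in $A_s$: if $\eta_s*W=0$ with $\supp\widehat W\subset A_s$, then $\widehat\eta_s\widehat W=0$, and $\widehat\eta_s$ does not vanish on $A_s$, so $\widehat W=0$.

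There is no real obstacle here. The only points requiring care are that the unfolding is legitimate (the lattice sum is finite and the integrand is bounded) and that the inequality $\ell_s\le s$ uses both $c_{\rm GH}<1$ and $s\ge L^{-1}$. If $V$ depends on the sample $\omega$, the construction applies pointwise in $\omega$, and the $L^2_{\omega,x}$ bound follows by taking expectations.
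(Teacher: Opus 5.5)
Your proposal is correct and follows the paper's proof: unfolding the lattice sum is exactly the Poisson-summation identity the paper invokes, and inverting by dividing the Fourier coefficients by $\widehat\eta_s(k)$ on $A_s$, then applying Parseval with the bound $\abs{\widehat\eta_s(k)}\ge\frac12$, is the same construction. You also spell out the check $\ell_s\le s$, the exchange of sum and integral, and the uniqueness argument, all of which the paper leaves implicit.
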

 
\begin{proof}
The identity for the multiplier is the Poisson summation formula applied to \eqref{eq:periodised};
the lower bound is \eqref{eq:mollifier}. Define $\widetilde V$ by
$\widehat{\widetilde V}(k)=\widehat V(k)/\widehat\eta_{s}(k)$ on $A_s$ and $0$ elsewhere; Parseval
and \eqref{eq:multiplier} give the norm bound.
\end{proof}

\subsection{Second-order Hoeffding components ($d=2,3$)}
\label{ssec:pair23}
We now prove that $P_2W_{n,d}$ is well approximated by
$U_n:=U_{n,\infty}$. By Lemma \ref{lem:pairwise-duality}, it is enough to control $\Cov(W_{n,d}-U_n,S_f)$ uniformly over functions $f$ with $\norm{f}_2=1$. We first do this when the Fourier support of $f$ is contained in one frequency band.

\begin{lemma}\label{lem:one-band-response}
Let $d\in\{2,3\}$ and $L=n^{1/d}$. Suppose that
$f:\T^d\to\R$ is real and even, with $\norm{f}_2=1$ and
\begin{align*}
    \supp\widehat f\subset A_s,
    \qquad
    L^{-1}\leq s\leq1.
\end{align*}
Then,
\begin{align}\label{eq:one-band-response}
    \abs{\Cov(W_{n,d}-U_n,S_f)}
    \lesssim
    \begin{cases}
        \displaystyle
        L^{-1}\left[
            \log(2+Ls)
            +
            \left(1+(Ls)^{-1/2}\right)\sqrt{\log(e/s)}
        \right]
        +n^{-1}s,
        & d=2,\\[3mm]
        \displaystyle
        L^{-1/2}
        +L^{-3/2}s^{-1}
        +n^{-1}s^{1/2},
        & d=3.
    \end{cases}
\end{align}
\end{lemma}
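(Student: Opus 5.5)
We compute $\Cov(W_{n,d},S_f)$ with the response formula of Lemma~\ref{lem:order-r-response} for $r=2$ and $a(x,y):=f(x-y)$. Since $\widehat f$ is finitely supported, $a$ is smooth, and it is symmetric and canonical because $f$ is even and $\widehat f(0)=0$. With $\varphi:=(-\Delta)^{-1}f$ we get $g(x,y)=\nabla\varphi(x-y)$ and $G_i^{\#}(x)=\sum_{j\neq i}\nabla\varphi(x-X_j)$. The function $\nabla\varphi$ is odd, so $\nabla\varphi(0)=0$ and the $j=i$ term can be added for free. Also $\nabla\varphi*\mu=0$. Hence, with $\mathfrak j$ from \eqref{eq: jVecMeas}, $\nu_n=\mu_n-\mu$ and $V:=\nabla\varphi*\nu_n$,
\[
\Cov(W_{n,d},S_f)=2n\,\E\ip{\mathfrak j}{V}.
\]
The field $V$ has Fourier support in $A_s$. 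By Lemma~\ref{lem:multiplier}, $V=\eta_s*\widetilde V$, and since $\eta_s$ is even, $\ip{\mathfrak j}{V}=\ip{J_s}{\widetilde V}$. We then substitute $J_s=E_s-\nabla u_s-(\rho_s-1)\nabla u_s$ from \eqref{eq:E}. This splits the covariance into a linear main term, a discrepancy term and a density-fluctuation term.

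\emph{Main term.} In Fourier variables, $\widehat{\nabla u_s}(k)=-2\pi i k\,\widehat\eta_s(k)\widehat{\nu_n}(k)/\lambda_k$ and $\widehat{\widetilde V}(k)=2\pi i k\,\widehat f(k)\widehat{\nu_n}(k)/(\lambda_k\widehat\eta_s(k))$ on $A_s$. The multiplier $\widehat\eta_s$ cancels, and Parseval gives
\[
-2n\,\ip{\nabla u_s}{\widetilde V}=2n\sum_{k\in A_s}\abs{\widehat{\nu_n}(k)}^2\frac{\widehat f(k)}{\lambda_k}.
\]
Using $\E\abs{\widehat{\nu_n}(k)}^2=1/n$ for $k\neq0$, its expectation is $2\ip{G}{f}$. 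On the other hand, $\Cov(U_n,S_f)=n^{-2}\cdot2n(n-1)\ip{G}{f}$. The difference is therefore $\frac2n\abs{\ip{G_{A_s}}{f}}\le\frac2n\norm{G_{A_s}}_2$. Since $\#A_s\lesssim s^{-d}$ and $\lambda_k^{-1}\lesssim s^2$ on $A_s$, we have $\norm{G_{A_s}}_2\lesssim s^{2-d/2}$. This produces the terms $n^{-1}s$ for $d=2$ and $n^{-1}s^{1/2}$ for $d=3$.

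\emph{Error terms.} First, $\norm{\widetilde V}_{L^2_{\omega,x}}\le2\norm{V}_{L^2_{\omega,x}}$, and
\[
\norm{V}_{L^2_{\omega,x}}^2=\frac1n\sum_{k\in A_s}\frac{\abs{\widehat f(k)}^2}{\lambda_k}\lesssim\frac{s^2}{n}.
\]
Cauchy--Schwarz in $L^2_{\omega,x}$ together with Proposition~\ref{prop:GH} bounds the discrepancy term by
\[
2n\norm{E_s}_{L^2_{\omega,x}}\norm{\widetilde V}_{L^2_{\omega,x}}\lesssim n^{1/2}L^{-2}\log(2+Ls)
\]
for $d=2$. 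Since $n^{1/2}=L$, this equals $L^{-1}\log(2+Ls)$. For $d=3$ the same computation without the logarithm gives $L^{3/2}L^{-2}=L^{-1/2}$. For the density-fluctuation term we use H\"older with exponents $(4,4,2)$ in the joint space and the remaining two bounds of Proposition~\ref{prop:GH}. For $d=2$ this gives
\[
n\,(Ls)^{-1}L^{-1}\sqrt{\log(e/s)}\;s\,n^{-1/2}=L^{-1}\sqrt{\log(e/s)}.
\]
For $d=3$ it gives $L^{3/2}(Ls)^{-3/2}L^{-3/2}s^{-1/2}s=L^{-3/2}s^{-1}$. Both are within the claimed bound, the factor $(1+(Ls)^{-1/2})$ in $d=2$ leaving room to spare.

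\emph{Expected difficulty.} The step I expect to need the most care is the exact algebra of the main term. One must check that the $\widehat\eta_s$ factors cancel exactly and that the signs from the orientation of $\mathfrak d$ (Remark~\ref{displacement_remark}) and from $\Delta u_s=\rho_s-1$ match. Otherwise the Green statistic would not reappear with coefficient $2\ip{G}{f}$. A further point is the diagonal bookkeeping: $j=i$, and the $(n-1)/n$ factor, which is precisely the source of the $n^{-1}s^{\,2-d/2}$ term. The integration by parts inside Lemma~\ref{lem:order-r-response} is legitimate because $W_{n,d}$ is Lipschitz by Lemma~\ref{lem:gradient} and $f$ is a trigonometric polynomial.
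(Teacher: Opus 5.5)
Your proposal is correct and follows the paper's proof essentially step by step. You apply the response identity with $a(x,y)=f(x-y)$; your $V=\nabla(-\Delta)^{-1}f*\nu_n$ is just the paper's $g_f/n$. You then deconvolve via Lemma~\ref{lem:multiplier}, split $J_s=E_s-\nabla u_s-(\rho_s-1)\nabla u_s$, cancel the linear term against $\Cov(U_n,S_f)$ up to the $\frac2n\langle G,f\rangle$ remainder, and bound the other two terms by Cauchy--Schwarz and H\"older using Proposition~\ref{prop:GH}. Your remark is also right: in $d=2$ the density-fluctuation term is in fact $O(L^{-1}\sqrt{\log(e/s)})$, so the factor $(1+(Ls)^{-1/2})$ is pure slack.
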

\begin{proof}
    Take
\begin{align*}
    Q_f:=\frac12\sum_{i\neq j}(-\Delta)^{-1}f(X_i-X_j),
    \qquad
    g_f(x):=\sum_{i=1}^n\nabla(-\Delta)^{-1}f(x-X_i).
\end{align*}
Since $f$ is even, $(-\Delta)^{-1} f$ is even, and therefore
\begin{align*}
    \nabla_{X_i}Q_f
    &=
    g_f(X_i),\\
    -\sum_{i=1}^n\Delta_{X_i}Q_f
    &=
    S_f.
\end{align*}
Moreover, $(-\Delta)^{-1} f$ is centered, so the cross terms corresponding to
different sample points vanish. Hence
\begin{align}\label{eq:gf_bound}
    \E\norm{g_f}_{L_x^2}^2
    =
    n\norm{\nabla(-\Delta)^{-1}f}_2^2
    =
    n\sum_{k\in A_s}
    \frac{\abs{\widehat f(k)}^2}{\lambda_k}
    \lesssim
    ns^2.
\end{align}
By Lemma \ref{lem:multiplier}, there exists a vector field
$\widetilde g_f$ with Fourier support in $A_s$ such that
\begin{align*}
    g_f
    &=
    \eta_s*\widetilde g_f,\\
    \norm{\widetilde g_f}_{L^2_{\omega,x}}
    &\leq
    2\norm{g_f}_{L^2_{\omega,x}}
    \lesssim
    \sqrt n\,s.
\end{align*}
By the response identity \eqref{eq:order-r-response},
\begin{align}
\label{eq:pair-response-current}
    \Cov(W_{n,d},S_f)
    =
    \frac2n \E\sum_{i=1}^n d_i\cdot g_f(X_i)
    =
    2\E\int_{\T^d}g_f\cdot\dd \mathfrak{j},
\end{align}
where $\mathfrak{j}$ was the vector measure defined in equation~\eqref{eq: jVecMeas}. Since $\eta_s$ is even and $J_s=\eta_s*\mathfrak{j}$, we obtain
\begin{align}\label{eq:smoothed-pair-response}
    \Cov(W_{n,d},S_f)
    &=
    2\E\int_{\T^d}
    J_s\cdot\widetilde g_f\dd\mu\notag\\
    &=
    2\E\int_{\T^d}
    E_s\cdot\widetilde g_f\dd\mu
    -
    2\E\int_{\T^d}
    (\rho_s-1)\nabla u_s\cdot\widetilde g_f\dd\mu\notag\\
    &\qquad
    -
    2\E\int_{\T^d}
    \nabla u_s\cdot\widetilde g_f\dd\mu.
\end{align}
Define 
\begin{equation}
\label{eq: SkDef}
  S_k = \sum_{i=1}^{n} e_k(X_i).
\end{equation} 
Then,
\begin{align*}
    -2\E\int_{\T^d}
    \nabla u_s\cdot\widetilde g_f\dd\mu
    &=
    -2\sum_{k\neq0}
    \E\left[
        \widehat{\nabla u_s}(k)\cdot
        \overline{\widehat{\widetilde g_f}(k)}
    \right]\\
    &=
    -2\sum_{k\neq0}
    \E\left[
        -\frac{2\pi ik}{\lambda_k}
        \widehat{\eta_s}(k)\frac{S_{-k}}{n}
        \cdot
        \overline{
            \frac{2\pi ik}{\lambda_k}
            \frac{\widehat f(k)}{\widehat{\eta_s}(k)}
            S_{-k}
        }
    \right]\\
    &=
    2\sum_{k\neq0}
    \frac{\widehat f(k)}{\lambda_k}
    \frac{\E\abs{S_k}^2}{n}\\
    &=
    2\sum_{k\neq0}
    \frac{\widehat f(k)}{\lambda_k}.
\end{align*}
Furthermore,
\begin{align*}
    \Cov(U_n,S_f)
    &=
    \frac{4}{n^2}
    \sum_{1\leq i<j\leq n}
    \E\left[
        G(X_i-X_j)f(X_i-X_j)
    \right]\\
    &=
    \frac{4}{n^2}\binom{n}{2}\langle G,f\rangle\\
    &=
    2\frac{n-1}{n}
    \sum_{k\neq0}\frac{\widehat f(k)}{\lambda_k}.
\end{align*}
Together with \eqref{eq:smoothed-pair-response}, we have
\begin{align}\label{eq:2_cov_error}
    \Cov(W_{n,d}-U_{n},S_f) &= 2\E\int_{\T^d}
    E_s\cdot\widetilde g_f\dd\mu
    -
    2\E\int_{\T^d}
    (\rho_s-1)\nabla u_s\cdot\widetilde g_f\dd\mu\notag\\
    &\qquad
    +
    \frac{2}{n}
    \sum_{k\neq0}\frac{\widehat f(k)}{\lambda_k}.
\end{align}
It remains to bound the three terms in
\eqref{eq:2_cov_error}. By Cauchy--Schwarz, \eqref{eq:gf_bound}, and Proposition \ref{prop:GH},
\begin{align*}
    \left|
        \E\int_{\T^d}
        E_s\cdot\widetilde g_f\dd\mu
    \right|
    &\leq
    \norm{E_s}_{L^2_{\omega,x}}
    \norm{\widetilde g_f}_{L^2_{\omega,x}}\\
    &\lesssim
    \begin{cases}
        L^{-1}\log(2+Ls),
        &d=2,\\
        L^{-1/2},
        &d=3.
    \end{cases}
\end{align*}

By H\"older's inequality,
\begin{align*}
    \left|
        \E\int_{\T^d}
        (\rho_s-1)\nabla u_s\cdot\widetilde g_f\dd\mu
    \right|
    &\leq
    \norm{\rho_s-1}_{L^4_{\omega,x}}
    \norm{\nabla u_s}_{L^4_{\omega,x}}
    \norm{\widetilde g_f}_{L^2_{\omega,x}}\\
    &\lesssim
    \begin{cases}
        \displaystyle
        L^{-1}
        \left(1+(Ls)^{-1/2}\right)
        \sqrt{\log(e/s)},
        &d=2,\\[2mm]
        L^{-3/2}s^{-1},
        &d=3.
    \end{cases}
\end{align*}

Finally, since $\supp\widehat f\subset A_s$ and
$\norm{f}_2=1$, Cauchy--Schwarz gives
\begin{align*}
    \frac1n
    \left|
        \sum_{k\neq0}
        \frac{\widehat f(k)}{\lambda_k}
    \right|
    &\leq
    \frac1n
    \left(
        \sum_{k\in A_s}\lambda_k^{-2}
    \right)^{1/2}
    \norm{f}_2\\
    &\lesssim
    n^{-1}s^{2-d/2}\\
    &=
    \begin{cases}
        n^{-1}s,
        &d=2,\\
        n^{-1}s^{1/2},
        &d=3.
    \end{cases}
\end{align*}
Combining these three estimates proves
\eqref{eq:one-band-response}.

\end{proof}

Now we state the main lemma in this subsection. Let $\mathcal F$ be the set of real, even, and centered functions
$f\in L^2(\T^d,\mu)$ with $\norm{f}_2=1$.

\begin{lemma}
\label{lem:uniform-pairwise-response}
For $d\in\{2,3\}$, as $n\to \infty$
\begin{align}\label{eq:uniform-pairwise-response}
    \sup_{f\in\mathcal F}
    \abs{\Cov(W_{n,d}-U_n,S_f)}
    \longrightarrow0.
\end{align}
Consequently, Lemma \ref{lem:pairwise-duality} gives
\begin{align*}
    \E\left[
        \abs{nP_2W_{n,d}-nU_n}^2
    \right]
    &=
    n^2\Var(P_2W_{n,d}-U_n)\\
    &=
    \frac{n}{2(n-1)}
    \sup_{f\in\mathcal F}
    \abs{\Cov(W_{n,d}-U_n,S_f)}^2
    \longrightarrow0.
\end{align*}
\end{lemma}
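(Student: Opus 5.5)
The plan is to exploit linearity of $f\mapsto \Cov(W_{n,d}-U_n,S_f)$ and split each $f\in\mathcal F$ into dyadic frequency pieces. Fix $f\in\mathcal F$. Since $f$ is centered, $\widehat f(0)=0$, so every frequency satisfies $\abs{k}\ge1$. Set $s_j:=2^{-j}$ and let $J$ be the largest $j$ with $s_j\ge L^{-1}$. Write
\[
  f=\sum_{j=0}^{J} f_j + f_{\rm hi},\qquad \widehat{f_j}=\1_{A_{s_j}}\widehat f,\qquad \supp\widehat{f_{\rm hi}}\subset\{\abs k\ge 2s_J^{-1}\}\subset\{\abs k\geq L\}.
\]
Each piece is real and even, because the bands are symmetric under $k\mapsto-k$. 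Put $c_j:=\norm{f_j}_2$ and $c_{\rm hi}:=\norm{f_{\rm hi}}_2$, so that $\sum_j c_j^2+c_{\rm hi}^2=1$. Then
\[
  \abs{\Cov(W_{n,d}-U_n,S_f)}\le\sum_j c_j\,B_d(s_j)+\abs{\Cov(W_{n,d}-U_n,S_{f_{\rm hi}})},
\]
where $B_d(s)$ is the right-hand side of \eqref{eq:one-band-response}, applied to $f_j/c_j$.

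\emph{Low and intermediate bands.} We sum the band bounds, using Cauchy--Schwarz in $j$ (with $J\lesssim\log L$) for the non-summable parts and geometric summation for the parts growing in $s^{-1}$.

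For $d=3$:
\begin{itemize}
  \item The constant term gives $\sum_j c_jL^{-1/2}\le (J+1)^{1/2}L^{-1/2}\lesssim L^{-1/2}\sqrt{\log L}$.
  \item The term $L^{-3/2}s_j^{-1}$ gives at most $L^{-3/2}\bigl(\sum_j s_j^{-2}\bigr)^{1/2}\lesssim L^{-3/2}\cdot L=L^{-1/2}$.
  \item The term $n^{-1}s^{1/2}$ is summable in $j$, giving $O(n^{-1})$.
\end{itemize}

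For $d=2$:
\begin{itemize}
  \item The bound $\log(2+Ls)\lesssim\log L$ gives a contribution $\lesssim L^{-1}(\log L)^{3/2}$.
  \item Since $(Ls)^{-1/2}\le1$ and $\log(e/s)\lesssim\log L$, the second bracket gives $\lesssim L^{-1}\log L\cdot\sqrt{J}$.
  \item The term $n^{-1}s$ is again summable, giving $O(n^{-1})$.
\end{itemize}
All of these tend to zero uniformly in $f$.

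\emph{High band, $\abs{k}\ge L$.} Here Proposition \ref{prop:GH} is not available, so we bound the two covariances separately and crudely.

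For the $U_n$ part, the computation in the proof of Lemma \ref{lem:one-band-response} gives
\[
  \abs{\Cov(U_n,S_{f_{\rm hi}})}\le 2\Bigl(\sum_{\abs k\ge L}\lambda_k^{-2}\Bigr)^{1/2}\lesssim L^{d/2-2}\to0 \qquad (d\le3).
\]

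For the $W_{n,d}$ part, we use \eqref{eq:pair-response-current} and Cauchy--Schwarz over $i$ and $\omega$:
\[
  \abs{\Cov(W_{n,d},S_{f_{\rm hi}})}\le\frac2n\Bigl(\E\sum_i\abs{d_i}^2\Bigr)^{1/2}\Bigl(\E\sum_i\abs{g_{f_{\rm hi}}(X_i)}^2\Bigr)^{1/2}.
\]
The two factors are controlled as follows.
\begin{itemize}
  \item By \eqref{eq:disp_bound} and the bounds on $\E W_{n,d}$ quoted in Lemma \ref{lem:high-frequency}, $\E\sum_i\abs{d_i}^2\le n\E W_{n,d}\lesssim n^{1-2/d}$, with an extra factor $\log n$ when $d=2$.
  \item In $g_{f_{\rm hi}}(X_i)$ the self-term vanishes, since the gradient of an even function is zero at $0$. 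The remaining $n-1$ summands are conditionally centered and orthogonal, so $\E\abs{g_{f_{\rm hi}}(X_i)}^2\le n\norm{\nabla(-\Delta)^{-1}f_{\rm hi}}_2^2\lesssim nL^{-2}$.
\end{itemize}
Multiplying, the bound is $\lesssim n^{1/2-2/d}$ (times $\sqrt{\log n}$ if $d=2$), which equals $n^{-1/2}\sqrt{\log n}$ for $d=2$ and $n^{-1/6}$ for $d=3$. Both tend to zero. This proves \eqref{eq:uniform-pairwise-response}.

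\emph{Consequence.} Apply Lemma \ref{lem:pairwise-duality} to $F:=W_{n,d}-U_n$, which is symmetric, jointly translation invariant and in $L^2$, with $B=\Z^d\setminus\{0\}$. Since $U_n$ lies in the second Hoeffding subspace, $P_2F=P_2W_{n,d}-U_n$. Its kernel is centered, so $(P_2F)_B=P_2F$. Admissible test functions with $\supp\widehat f\subset B$ are exactly $\mathcal F$. Hence
\[
  n^2\Var(P_2F)=\frac{n^2}{2n(n-1)}\sup_{f\in\mathcal F}\abs{\Cov(F,S_f)}^2 ,
\]
and $\E[\abs{nP_2W_{n,d}-nU_n}^2]$ equals this because the quantity is centered.

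The main obstacle is the bookkeeping in $d=2$: the logarithmic losses in Proposition \ref{prop:GH} must remain beaten by the factor $L^{-1}$ after summing roughly $\log L$ bands. Cauchy--Schwarz across bands, rather than the triangle inequality, is essential, and I would check it first. The high band is only safe because $d<4$ makes both $L^{d/2-2}$ and $n^{1/2-2/d}$ decay.
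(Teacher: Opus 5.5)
Your proposal is correct and follows essentially the paper's route. Like the paper, you split into dyadic bands, handle each band with Lemma~\ref{lem:one-band-response}, sum the bands with Cauchy--Schwarz in $j$, and bound the remaining high frequencies crudely.

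The differences are minor. You put the cutoff at the maximal admissible scale $\abs{k}\sim L$ instead of at $n^{\alpha}$. You also bound the high band by Cauchy--Schwarz on \eqref{eq:pair-response-current}, where the paper uses Lemma~\ref{lem:high-frequency}. Both rest on the same input $\E\sum_i\abs{d_i}^2\le n\E W_{n,d}$ and give the same bound. Since $f_{\rm hi}$ need not be smooth, apply your bound to its finite Fourier truncations and pass to the limit.
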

\begin{proof}
Fix the cutoff $\Lambda=n^{2\alpha}$, where
\begin{align*}
    0<\alpha<\frac12,
    &\qquad d=2,\\
    \frac16<\alpha<\frac13,
    &\qquad d=3.
\end{align*}
Using the standard Fourier cutoff, write
\begin{align*}
    \widehat{f_{\leq\Lambda}}(k)
    :=
    \mathbf 1_{\{\lambda_k\leq\Lambda\}}\widehat f(k),
    \qquad
    f_{>\Lambda}:=f-f_{\leq\Lambda}.
\end{align*}
Then,
\begin{align}\label{eq:low-high-response}
    \Cov(W_{n,d}-U_n,S_f)
    &=
    \Cov(W_{n,d}-U_n,S_{f_{\leq\Lambda}})
    +
    \Cov(W_{n,d}-U_n,S_{f_{>\Lambda}}).
\end{align}

We first control the low-frequency term. Let
\begin{align*}
    \mathcal B_j
    :=
    \left\{
        k\in\Z^d:
        2^j\leq\abs{k}<2^{j+1},
        \quad
        \lambda_k\leq\Lambda
    \right\},
    \qquad
    0\leq j\leq J,
\end{align*}
where $J$ is the largest index for which $\mathcal B_J$ is nonempty, and
let $f_j$ be the Fourier projection of $f$ onto $\mathcal B_j$. Then
\begin{align*}
    f_{\leq\Lambda}
    =
    \sum_{j=0}^Jf_j,
    \qquad
    \sum_{j=0}^J\norm{f_j}_2^2
    =
    \norm{f_{\leq\Lambda}}_2^2
    \leq1.
\end{align*}
Moreover,
\begin{align*}
    2^J\asymp\sqrt{\Lambda}=n^\alpha,
    \qquad
    J\lesssim\log n,
    \qquad
    \mathcal B_j\subseteq A_{2^{-j}}.
\end{align*}
Since
\begin{align*}
    2^{-J}
    \asymp
    \Lambda^{-1/2}
    =
    n^{-\alpha}
    \geq
    L^{-1},
\end{align*}
Lemma \ref{lem:one-band-response} applies on every $\mathcal B_j$.

By homogeneity of Lemma \ref{lem:one-band-response} and Cauchy--Schwarz in $j$, for
$d=2$ we obtain
\begin{align*}
    \abs{
        \Cov(W_{n,2}-U_n,S_{f_{\leq\Lambda}})
    }^2
    &\lesssim
    L^{-2}
    \sum_{j=0}^J
    \left[
        \log^2(2+L2^{-j})
        +
        \left(1+L^{-1}2^j\right)(1+j)
    \right]
    +
    n^{-2}\sum_{j=0}^J4^{-j}\\
    &\lesssim
    n^{-1}(\log n)^3
    \longrightarrow0.
\end{align*}
Here we used $L=n^{1/2}$, $J\lesssim\log n$, and
\begin{align*}
    L^{-1}2^J
    \lesssim
    L^{-1}\sqrt{\Lambda}
    =
    n^{-1/2+\alpha}
    \lesssim1.
\end{align*}
For $d=3$, the same argument gives
\begin{align*}
    \abs{
        \Cov(W_{n,3}-U_n,S_{f_{\leq\Lambda}})
    }^2
    &\lesssim
    \sum_{j=0}^J
    \left(
        L^{-1}
        +
        L^{-3}4^j
        +
        n^{-2}2^{-j}
    \right)\\
    &\lesssim
    L^{-1}(J+1)
    +
    L^{-3}4^J
    +
    n^{-2}\\
    &\lesssim
    n^{-1/3}\log n
    +
    n^{-1}\Lambda
    +
    n^{-2}
    \longrightarrow0.
\end{align*}

We next control the high-frequency term. Since
$e_k(X_i-X_j)$ has product eigenvalue $2\lambda_k$, the statistic
$S_{f_{>\Lambda}}$ has product spectrum above $2\Lambda$. Therefore, by Lemma \ref{lem:high-frequency}
\begin{align}\label{eq:high-W-response}
    \abs{\Cov(W_{n,d},S_{f_{>\Lambda}})}^2
    &\lesssim
    \begin{cases}
        (\log n)\Lambda^{-1},
        &d=2,\\
        n^{1/3}\Lambda^{-1},
        &d=3.
    \end{cases}
\end{align}

For the Green statistic,
\begin{align*}
    \Cov(U_n,S_{f_{>\Lambda}})
    =
    2\frac{n-1}{n}
    \sum_{\lambda_k>\Lambda}
    \frac{\widehat f(k)}{\lambda_k}.
\end{align*}
Hence, by Cauchy--Schwarz and Appendix \ref{app:lattice},
\begin{align}\label{eq:high-U-response}
    \abs{\Cov(U_n,S_{f_{>\Lambda}})}^2
    &\lesssim
    \left(
        \sum_{\lambda_k>\Lambda}
        \frac{1}{\lambda_k^2}
    \right)
    \norm{f_{>\Lambda}}_2^2\notag\\
    &\lesssim
    \begin{cases}
        \Lambda^{-1},
        &d=2,\\
        \Lambda^{-1/2},
        &d=3.
    \end{cases}
\end{align}
Combining \eqref{eq:high-W-response} and
\eqref{eq:high-U-response}, we obtain
\begin{align*}
    \abs{
        \Cov(W_{n,d}-U_n,S_{f_{>\Lambda}})
    }^2
    \lesssim
    \begin{cases}
        (\log n)\Lambda^{-1}+\Lambda^{-1},
        &d=2,\\
        n^{1/3}\Lambda^{-1}+\Lambda^{-1/2},
        &d=3,
    \end{cases}
    \longrightarrow0.
\end{align*}

Together with the low-frequency estimates and
\eqref{eq:low-high-response}, this proves the lemma.
\end{proof}

\subsection{Higher-order Hoeffding components ($d=2,3$)}
\label{ssec:higher23}
We now prove that $\sum_{r\geq 3} P_rW_{n,d}$ is negligible.
\begin{lemma}\label{lem:higher-orders}
For $d\in\{2,3\}$,
\begin{align}\label{eq:higher-orders}
    \E\left[
        \abs{
            n\sum_{r=3}^nP_rW_{n,d}
        }^2
    \right]
    \longrightarrow 0, \qquad \text{ as } n \to \infty.
\end{align}
\end{lemma}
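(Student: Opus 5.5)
The plan is to bound $n^2\sum_{r\ge3}\Var(P_rW_{n,d})$ by splitting $W_{n,d}$ with the product spectral projection of Section~\ref{ssec:cutoff} at a level $\Lambda=\Lambda_n$. Since Hoeffding projections commute with the product Fourier projections $\Pi_{\le\Lambda},\Pi_{>\Lambda}$ (both are diagonal in the tensor Fourier basis), we have $P_rW=P_r\Pi_{\le\Lambda}W+P_r\Pi_{>\Lambda}W$. The high part is handled directly by Lemma~\ref{lem:high-frequency}:
\[
n^2\sum_{r\ge3}\Var(P_r\Pi_{>\Lambda}W_{n,d})\le n^2\norm{\Pi_{>\Lambda}W_{n,d}}_2^2\lesssim\begin{cases}\Lambda^{-1}\log n,&d=2,\\ n^{1/3}\Lambda^{-1},&d=3,\end{cases}
\]
which tends to $0$ once $\Lambda\gg\log n$ (resp.\ $\Lambda\gg n^{1/3}$). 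We take $\Lambda=n^{2\alpha}$ with $\alpha$ in the same ranges as in the proof of Lemma~\ref{lem:uniform-pairwise-response}. Two structural facts are used throughout. First, a canonical order-$r$ kernel has all $r$ frequencies nonzero, so its product eigenvalue is at least $4\pi^2 r$. Hence only orders $r\lesssim\Lambda$ survive in the low part, and every frequency appearing there satisfies $\lambda_{k_j}\le\Lambda$, i.e.\ $\abs{k_j}\lesssim n^{\alpha}$, which stays above the scale $L^{-1}$ where Proposition~\ref{prop:GH} applies.

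For the low part, I would use the duality \eqref{eq:order-r-duality}. For a smooth, symmetric, canonical $a$ of order $r$ with Fourier support in $\{\sum\lambda_{k_j}\le\Lambda\}$, the response identity \eqref{eq:order-r-response} reads $\mathcal R_r(a)=2\E\int G^{\#}\cdot\dd\mathfrak j$, where $G^{\#}(x):=\sum_{i_2,\dots,i_r\ \text{distinct}}g(x,X_{i_2},\dots,X_{i_r})$; the excluded index $i$ only produces diagonal corrections, which are handled separately. I would decompose $a$ dyadically in its first frequency $k_1\in A_s$, exactly as in Lemma~\ref{lem:one-band-response}. On each band, Lemma~\ref{lem:multiplier} lets us write $G^{\#}=\eta_s*\widetilde G^{\#}$, so the pairing becomes $2\E\int J_s\cdot\widetilde G^{\#}$. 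We then split
\[
J_s=E_s-(\rho_s-1)\nabla u_s-\nabla u_s .
\]
The key observation is that $\nabla u_s$ is a linear statistic of the sample, i.e.\ a sum of single-point terms. Its pairing with $\widetilde G^{\#}$ is therefore a sum over at most $r$ indices, in which some index occurs only once inside a canonical kernel. Using canonicity, $\E\int\nabla u_s\cdot\widetilde G^{\#}$ vanishes for $r\ge3$, up to diagonal terms of lower order in $n$. Likewise, $(\rho_s-1)\nabla u_s$ is quadratic in the sample, so its pairing is nonzero only for $r=3$. For that single term I would compute or estimate the covariance directly, as an explicit cubic $U$-statistic computation, aiming for a bound $O(n^{-1/2}\,\mathrm{polylog})$ after multiplying by $n$.

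The remaining term is the nonlinear error, bounded by Cauchy--Schwarz:
\[
\abs{\E\textstyle\int E_s\cdot\widetilde G^{\#}}\le 2\norm{E_s}_{L^2_{\omega,x}}\norm{G^{\#}}_{L^2_{\omega,x}} .
\]
By orthogonality of canonical terms, $\E\norm{G^{\#}}_{L^2_x}^2\lesssim (n)_{r-1}\,r!\,s^2\norm{a_s}_2^2$, where $a_s$ denotes the band-$A_s$ piece of $a$. Combining this with Proposition~\ref{prop:GH}, summing the bands by Cauchy--Schwarz as in Lemma~\ref{lem:uniform-pairwise-response}, and inserting into \eqref{eq:order-r-duality}, I expect a bound of the form
\[
n^2\Var(P_r\Pi_{\le\Lambda}W)\lesssim \frac{C^r}{r!}\,n^{2}\,n^{-1}\sum_j\norm{E_{2^{-j}}}^2 4^{-j}\cdot n ,
\]
summable in $r$. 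The main obstacle is exactly here: the prefactor from $\norm{G^{\#}}$ grows like $\sqrt{(n)_r}$, and the final quantity must be $o(1)$ rather than merely $O(\mathrm{polylog}\,n)$. In $d=2$ the naive count loses a logarithm, so some care in the bookkeeping of the $r!$ factors and the band sums is required. If this fails, I would first try to exploit the gain that the remaining frequencies $k_2,\dots,k_r$ also lie in bands. This means smoothing $\mathfrak j$ only in the first variable but using stationarity to place $E_s$ against a kernel that is canonical in the other variables, so that the leading, order-two part of $E_s$ contributes nothing to orders $r\ge3$. After that, only the genuinely higher-order fluctuations of $E_s$ need to be bounded, and these should carry an extra factor $n^{-1/2}$.
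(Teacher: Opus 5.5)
There is a genuine gap, in the term you defer, and it also determines the admissible $\alpha$. The skeleton otherwise matches the paper: the cutoff $\Lambda=n^{2\alpha}$, the duality \eqref{eq:order-r-duality}, dyadic bands in $k_1$, exact cancellation of the $\nabla u_s$ pairing by Hoeffding orthogonality, and Cauchy--Schwarz against $E_s$. The response \eqref{eq:order-r-response} involves $G_i^{\#}$, which omits the label $i$. The difference $G^{\#}-G_i^{\#}$ at $X_i$ is $\sum_{b=2}^r Q_{b,i}(X_i)$, a sum over $r-2$ remaining labels of the diagonal kernels $\operatorname{Diag}_b g$, i.e.\ $g$ with $x_1$ inserted in slot $b$. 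These are not automatically of lower order. The estimate that closes them is $\norm{\operatorname{Diag}_b g}_2^2\lesssim\sum_{0<\abs{k}\lesssim\sqrt\Lambda}\abs{k}^{-2}$, which is $\log\Lambda$ for $d=2$ and $\Lambda^{1/2}$ for $d=3$, combined with $\sum_i\E\abs{d_i}^2\le n\E W_{n,d}$. After the normalization $n^2/(r!(n)_r)$, each order contributes about $\E W_{n,d}\,\norm{\operatorname{Diag}_b g}_2^2$. Summing over $3\le r\le R\asymp\Lambda$ gives $n^{2\alpha-1}(\log n)^2$ in $d=2$, which is harmless. In $d=3$ it gives $n^{-2/3}\Lambda^{3/2}=n^{3\alpha-2/3}$, so you need $\alpha<2/9$. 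The range $1/6<\alpha<1/3$ you borrow from Lemma~\ref{lem:uniform-pairwise-response} is therefore too wide; the workable window is $1/6<\alpha<2/9$.

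The obstacle you flag in the $E_s$ term is a bookkeeping error, not a real obstruction, and no fallback is needed. In $\E\norm{G^{\#}_j}_{L^2_x}^2$ only label tuples that are permutations of each other survive, so the value is $(r-1)!(n)_{r-1}\norm{g_j}_2^2$, not $r!(n)_{r-1}$. The normalization is then $n^2(r-1)!(n)_{r-1}/(r!(n)_r)=n^2/(r(n-r+1))\approx n/r$, not a factorially decaying $C^r/r!$. With Proposition~\ref{prop:GH} and Cauchy--Schwarz over the $J+1\lesssim\log n$ bands, the common-field part of $n^2\Var(\Pi_{\le\Lambda}P_rW_{n,d})$ is $\lesssim r^{-1}n^{-1}(\log n)^3$ for $d=2$ and $\lesssim r^{-1}n^{-1/3}\log n$ for $d=3$. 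Summing $1/r$ over $r$ costs only one more logarithm. The extra factor $r$ from writing $r!$ is not harmless, though. In $d=3$ it turns $\sum_r 1/r$ into $R\asymp n^{2\alpha}$, giving $n^{2\alpha-1/3}\log n$, which diverges because the high-frequency part forces $\alpha>1/6$. Finally, the $(\rho_s-1)\nabla u_s$ term needs no explicit cubic computation. Your observation that it pairs only with $r=3$ is correct, but H\"older with the $L^4$ bounds of Proposition~\ref{prop:GH} already gives $L^{-2}\sqrt{\log(e/s)}$, resp.\ $L^{-3}s^{-1}\lesssim n^{-1}\Lambda^{1/2}$, uniformly in $r$.
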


\begin{proof}
Fix $\Lambda=n^{2\alpha}$, where
\begin{align*}
    0<\alpha<\frac12,
    &\qquad d=2,\\
    \frac16<\alpha<\frac29,
    &\qquad d=3.
\end{align*}
By orthogonality of the spectral projections,
\begin{align}\label{eq:higher-order-spectral-split}
    \E\left[
        \left|
            n\sum_{r=3}^nP_rW_{n,d}
        \right|^2
    \right]
    &=
    n^2\E\left[
        \left|
            \Pi_{\leq\Lambda}
            \sum_{r=3}^nP_rW_{n,d}
        \right|^2
    \right]
    +
    n^2\E\left[
        \left|
            \Pi_{>\Lambda}
            \sum_{r=3}^nP_rW_{n,d}
        \right|^2
    \right].
\end{align}
By Lemma \ref{lem:high-frequency},
\begin{align*}
    n^2\E\left[
        \left|
            \Pi_{>\Lambda}
            \sum_{r=3}^nP_rW_{n,d}
        \right|^2
    \right]
    \lesssim
    \begin{cases}
        \Lambda^{-1}\log n,
        &d=2,\\
        \Lambda^{-1}n^{1/3},
        &d=3.
    \end{cases}
    \longrightarrow0
\end{align*}
Therefore, it remains to prove
\begin{align}\label{eq:higher-order-low-goal}
    n^2\E\left[
        \left|
            \Pi_{\leq\Lambda}
            \sum_{r=3}^nP_rW_{n,d}
        \right|^2
    \right]
    \longrightarrow0.
\end{align}
To estimate the low-frequency part of $P_rW_{n,d}$ through the response
functional in Lemma \ref{lem:order-r-response}, we will test against symmetric, canonical functions of unit
$L^2$ norm whose product spectrum lies below $\Lambda$. We denote this
class by
\begin{align*}
    \mathcal C_{r,\Lambda}
    :=
    \left\{
        a:(\T^d)^r\to\R:
        \begin{array}{l}
            a\text{ is symmetric and canonical},\\
            \norm{a}_2=1,\quad
            \Pi_{\leq\Lambda}a=a
        \end{array}
    \right\}.
\end{align*}
By Lemma \ref{lem:order-r-response},
\begin{align*}
    n^2\E\left[
        \left|
            \Pi_{\leq\Lambda}
            \sum_{r=3}^nP_rW_{n,d}
        \right|^2
    \right]
    =
    n^2\sum_{r=3}^n
    \frac{1}{r!(n)_r}
    \sup_{a\in\mathcal C_{r,\Lambda}}
    \abs{\mathcal R_r(a)}^2.
\end{align*}
Since $a$ is canonical, the smallest eigenvalue is at least $4\pi^2 r$. Therefore, taking $R:=\left\lfloor\frac{\Lambda}{4\pi^2}\right\rfloor$, we have
\begin{align}\label{eq:higher-order-low-response}
    n^2\E\left[
        \left|
            \Pi_{\leq\Lambda}
            \sum_{r=3}^nP_rW_{n,d}
        \right|^2
    \right]
    =
    n^2\sum_{r=3}^R
    \frac{1}{r!(n)_r}
    \sup_{a\in\mathcal C_{r,\Lambda}}
    \abs{\mathcal R_r(a)}^2.
\end{align}
Now fix $3\leq r\leq R$ and $a\in\mathcal C_{r,\Lambda}$. Define
\(
    g:=\nabla_1(-\Delta_1)^{-1}a.
\) as previously.
By \eqref{eq:order-r-response},
\begin{align*}
    \mathcal R_r(a)
    =
    \frac{2}{n}\E\sum_{i=1}^n
    d_i\cdot G_i^\sharp(X_i).
\end{align*}
To replace the label-dependent field $G_i^\sharp$ by a common field, define
\begin{align*}
    G^\sharp(x)
    :=
    \sum_{\substack{i_2,\ldots,i_r\in[n]\\
    \text{pairwise distinct}}}
    g(x,X_{i_2},\ldots,X_{i_r}).
\end{align*}
Then
\begin{align}
\label{eq:higher-order-response-split}
    \mathcal R_r(a)
    =
    \frac{2}{n}\E\sum_{i=1}^n
    d_i\cdot G^\sharp(X_i)+
    \frac{2}{n}\E\sum_{i=1}^n
    d_i\cdot
    \left(G_i^\sharp-G^\sharp\right)(X_i).
\end{align}

\paragraph{The common-field term.}
We first control the first term in
\eqref{eq:higher-order-response-split}, following the argument of
Lemma \ref{lem:uniform-pairwise-response}.
Decompose $a$ in its first Fourier variable using the disjoint bands
$\mathcal B_j$ introduced in the proof of
\eqref{eq:uniform-pairwise-response}, with the present cutoff $\Lambda$.
Let $s_j=2^{-j}$ and let $J$ be the largest index for which
$\mathcal B_J\neq\varnothing$. Then $J\lesssim\log n$.

For $0\leq j\leq J$, define
\begin{align*}
    \widehat{a_j}(k_1,\ldots,k_r)
    &:=
    \mathbf 1_{\{k_1\in\mathcal B_j\}}
    \widehat a(k_1,\ldots,k_r),\\
    g_j
    &:=
    \nabla_1(-\Delta_1)^{-1}a_j,\\
    (G^\sharp)_j(x)
    &:=
    \sum_{\substack{i_2,\ldots,i_r\in[n]\\
                    \text{pairwise distinct}}}
    g_j(x,X_{i_2},\ldots,X_{i_r}).
\end{align*}
Since $a$ is canonical and has product spectrum at most $\Lambda$,
these bands cover its first-coordinate Fourier support.
Their disjointness gives
\begin{align*}
    a&=\sum_{j=0}^J a_j,
    &g&=\sum_{j=0}^J g_j,
    &G^\sharp&=\sum_{j=0}^J(G^\sharp)_j,
    &\sum_{j=0}^J\norm{a_j}_2^2&=\norm a_2^2=1.
\end{align*}

We first bound the second moment for $(G^\sharp)_j$. Expanding the square gives
\begin{align*}
    \E\int_{\T^d}
    \abs{(G^\sharp)_j(x)}^2\dd\mu(x)
    =
    \sum_{I,I'}
    \E\int_{\T^d}
    g_j(x,X_I)\cdot g_j(x,X_{I'})\dd\mu(x),
\end{align*}
where $I$ and $I'$ range over ordered tuples of $r-1$ distinct labels and
$X_I=(X_{i_2},\ldots,X_{i_r})$. If $I$ and $I'$ contain different sets of
labels, by canonicity, the integral is $0$. Thus a term survives only when $I'$ is
a permutation of $I$. Since there are $(n)_{r-1}$ choices of $I$ and
$(r-1)!$ such permutations, symmetry in the last $r-1$ variables gives
\begin{align}\label{eq:common-field-moment}
    \E\int_{\T^d}
    \abs{(G^\sharp)_j(x)}^2\dd\mu(x)
    =
    (r-1)!(n)_{r-1}\norm{g_j}_2^2.
\end{align}
Moreover,
\begin{align*}
    \norm{g_j}_2^2
    =
    \sum_{k_1,\ldots,k_r}
    \frac{1}{\lambda_{k_1}}
    \abs{\widehat{a_j}(k_1,\ldots,k_r)}^2
    \lesssim
    s_j^2\norm{a_j}_2^2,
\end{align*}
because $k_1\in A_{s_j}$. Consequently,
\begin{align}\label{eq:common-field-band-bound}
    \E\int_{\T^d}
    \abs{(G^\sharp)_j(x)}^2\dd\mu(x)
    \lesssim
    s_j^2(r-1)!(n)_{r-1}\norm{a_j}_2^2.
\end{align}
By Lemma \ref{lem:multiplier}, there exists a random vector field
$(\widetilde G^\sharp)_j$ such that
\begin{align*}
    (G^\sharp)_j
    =
    \eta_{s_j}*(\widetilde G^\sharp)_j
\end{align*}
and
\begin{align}\label{eq:common-field-smoothed-bound}
    \E\int_{\T^d}
    \abs{(\widetilde G^\sharp)_j(x)}^2\dd\mu(x)
    \lesssim
    s_j^2(r-1)!(n)_{r-1}\norm{a_j}_2^2.
\end{align}
By the definitions of $\rho_s,u_s,J_s,E_s$ above Remark \ref{displacement_remark} in
Section \ref{ssec:GH}, we have
\begin{align}\label{eq:common-field-decomposition}
    \frac{2}{n}\E\sum_{i=1}^n
    d_i\cdot(G^\sharp)_j(X_i)
    &=
    2\E\int_{\T^d}
    J_{s_j}\cdot(\widetilde G^\sharp)_j\dd\mu\notag\\
    &=
    2\E\int_{\T^d}
    E_{s_j}\cdot(\widetilde G^\sharp)_j\dd\mu\notag\\
    &\quad-
    2\E\int_{\T^d}
    \nabla u_{s_j}\cdot(\widetilde G^\sharp)_j\dd\mu\notag\\
    &\quad-
    2\E\int_{\T^d}
    (\rho_{s_j}-1)\nabla u_{s_j}
    \cdot(\widetilde G^\sharp)_j\dd\mu.
\end{align}

Recall the proof for Lemma \ref{lem:uniform-pairwise-response}, the term
\begin{align*}
    \E\int_{\T^d}
    \nabla u_{s_j}\cdot(\widetilde G^\sharp)_j\dd\mu
\end{align*}
is the leading contribution, and it is cancelled by
$\Cov(U_n,S_f)$. The key simplification for higher orders is that this term
vanishes by itself. More precisely, for every $r\geq3$,
\begin{align*}
    \E\int_{\T^d}
    \nabla u_{s_j}\cdot(\widetilde G^\sharp)_j\dd\mu
    =
    0.
\end{align*}
The argument is simple from Hoeffding's perspective. For each fixed $x\in\T^d$, we have
\begin{align*}
    \nabla u_{s_j}(x)
    =
    \frac1n\sum_{\ell=1}^n
    K_{s_j}(x-X_\ell),
\end{align*}
where $K_{s_j}$ is a deterministic centered vector field. Hence
$\nabla u_{s_j}(x)$ belongs to the first Hoeffding subspace
$\mathcal H_1$.

On the other hand, $(\widetilde G^\sharp)_j$ has the form
\begin{align*}
    (\widetilde G^\sharp)_j(x)
    =
    \sum_{\substack{i_2,\ldots,i_r\in[n]\\
    \text{pairwise distinct}}}
    \widetilde g_j(x,X_{i_2},\ldots,X_{i_r}).
\end{align*}
The kernel $\widetilde g_j = (\eta_{s_j})^{-1} * (\nabla_1(-\Delta_1)^{-1}a_j)$ is canonical in each of its last $r-1$
variables because $a_j$ is canonical. Therefore, for each fixed $x$,
$(\widetilde G^\sharp)_j(x)$ belongs to the Hoeffding subspace
$\mathcal H_{r-1}$. Since $r\geq3$, Hoeffding orthogonality from
Appendix \ref{app:hoeffding} gives
\begin{align*}
    \E\left[
        \nabla u_{s_j}(x)\cdot
        (\widetilde G^\sharp)_j(x)
    \right]
    =
    0.
\end{align*}
Integrating over $x$ proves
\begin{align*}
    \E\int_{\T^d}
    \nabla u_{s_j}\cdot
    (\widetilde G^\sharp)_j\dd\mu
    =
    0.
\end{align*}
We now bound the remaining two terms in
\eqref{eq:common-field-decomposition} using Proposition \ref{prop:GH}.
By Cauchy--Schwarz and \eqref{eq:common-field-smoothed-bound},
\begin{align*}
   \left|
        \E\int_{\T^d}
        E_{s_j}\cdot(\widetilde G^\sharp)_j\dd\mu
    \right|
    &\leq
    \left(
        \E\int_{\T^d}
        \abs{E_{s_j}}^2\dd\mu
    \right)^{1/2}
    \left(
        \E\int_{\T^d}
        \abs{(\widetilde G^\sharp)_j}^2\dd\mu
    \right)^{1/2}\\
    &\lesssim
    \sqrt{(r-1)!(n)_{r-1}}\norm{a_j}_2
    \begin{cases}
        L^{-2}\log(2+Ls_j),
        &d=2,\\
        L^{-2},
        &d=3.
    \end{cases}
\end{align*}
Similarly, by H\"older's inequality,
\begin{align*}
    &\left|
        \E\int_{\T^d}
        (\rho_{s_j}-1)\nabla u_{s_j}
        \cdot(\widetilde G^\sharp)_j\dd\mu
    \right|\\
    &\qquad\leq
    \left(
        \E\int_{\T^d}
        \abs{\rho_{s_j}-1}^4\dd\mu
    \right)^{1/4}
    \left(
        \E\int_{\T^d}
        \abs{\nabla u_{s_j}}^4\dd\mu
    \right)^{1/4}
    \left(
        \E\int_{\T^d}
        \abs{(\widetilde G^\sharp)_j}^2\dd\mu
    \right)^{1/2}\\
    &\qquad\lesssim
    \sqrt{(r-1)!(n)_{r-1}}\norm{a_j}_2
    \begin{cases}
        L^{-2}\sqrt{\log(e/s_j)},
        &d=2,\\
        L^{-3}s_j^{-1},
        &d=3.
    \end{cases}
\end{align*}
Since the middle term in
\eqref{eq:common-field-decomposition} vanishes, we obtain
\begin{align*}
    \left|
        \frac{2}{n}\E\sum_{i=1}^n
        d_i\cdot(G^\sharp)_j(X_i)
    \right|
    \lesssim
    \sqrt{(r-1)!(n)_{r-1}}\norm{a_j}_2
    \begin{cases}
        n^{-1}\log n,
        &d=2,\\
        n^{-2/3},
        &d=3.
    \end{cases}
\end{align*}
Here we used $L=n^{1/d}$ and $s_j^{-1}\lesssim\sqrt{\Lambda}$. In
dimension three, the assumption $\Lambda\leq n^{4/9}$ gives
\begin{align*}
    L^{-3}s_j^{-1}
    \lesssim
    n^{-1}\sqrt{\Lambda}
    \lesssim
    n^{-7/9}
    \lesssim
    n^{-2/3}.
\end{align*}

Finally, since $G^\sharp=\sum_{j=0}^J(G^\sharp)_j$ and the functions
$a_j$ are orthogonal,
\begin{align*}
    \sum_{j=0}^J\norm{a_j}_2
    &\leq
    \sqrt{J+1}
    \left(
        \sum_{j=0}^J\norm{a_j}_2^2
    \right)^{1/2}\\
    &=
    \sqrt{J+1}
    \lesssim
    \sqrt{\log n}.
\end{align*}
Summing over the dyadic bands gives
\begin{align*}
    \left|
        \frac{2}{n}\E\sum_{i=1}^n
        d_i\cdot G^\sharp(X_i)
    \right|^2
    \lesssim
    (r-1)!(n)_{r-1}
    \begin{cases}
        n^{-2}(\log n)^3,
        &d=2,\\
        n^{-4/3}\log n,
        &d=3.
    \end{cases}
\end{align*}

\paragraph{The excluded-label term.}
We now turn to the second term in
\eqref{eq:higher-order-response-split}. The difference
$G^\sharp-G_i^\sharp$ consists of the summands in which one of the labels
$i_2,\ldots,i_r$ equals $i$. Since these labels are pairwise distinct, the
label $i$ occurs in exactly one position.

For $b\in\{2,\ldots,r\}$, define the diagonal kernel
$\operatorname{Diag}_b g:(\T^d)^{r-1}\to\R^d$ by
\begin{align*}
    (\operatorname{Diag}_b g)
    (x_1,\ldots,x_{b-1},x_{b+1},\ldots,x_r)
    :=
    g(x_1,\ldots,x_{b-1},x_1,x_{b+1},\ldots,x_r).
\end{align*}
We first prove the diagonal estimate
\begin{align}\label{eq:diagonal-trace}
    \norm{\operatorname{Diag}_b g}_2
    \lesssim
    \norm{a}_2
    \begin{cases}
        \sqrt{\log(2+\Lambda)},
        &d=2,\\
        \Lambda^{1/4},
        &d=3.
    \end{cases}
\end{align}
Recall that
\begin{align*}
    \widehat g(k_1,\ldots,k_r)
    =
    \frac{2\pi i k_1}{\lambda_{k_1}}
    \widehat a(k_1,\ldots,k_r).
\end{align*}
Identifying $x_b$ with $x_1$ adds the corresponding Fourier frequencies.
Writing
\begin{align*}
    q:=k_1+k_b,
    \qquad
    \mathbf k'
    :=
    (k_2,\ldots,k_{b-1},k_{b+1},\ldots,k_r),
\end{align*}
we obtain
\begin{align*}
    \widehat{\operatorname{Diag}_b g}(q,\mathbf k')
    =
    \sum_{k_1+k_b=q}
    \frac{2\pi i k_1}{\lambda_{k_1}}
    \widehat a(k_1,\ldots,k_r).
\end{align*}
Since $a$ has product spectrum below $\Lambda$, every nonzero term in this
sum satisfies $\abs{k_1}\lesssim\sqrt{\Lambda}$. Therefore, by
Cauchy--Schwarz,
\begin{align*}
    \abs{
        \widehat{\operatorname{Diag}_b g}(q,\mathbf k')
    }^2
    &\lesssim
    \left(
        \sum_{0<\abs{k_1}\lesssim\sqrt{\Lambda}}
        \frac{1}{\abs{k_1}^2}
    \right)
    \sum_{k_1+k_b=q}
    \abs{\widehat a(k_1,\ldots,k_r)}^2.
\end{align*}
Summing over $q$ and $\mathbf k'$, each Fourier coefficient of $a$ is
counted once. Hence, by Parseval and Appendix \ref{app:lattice},
\begin{align*}
    \norm{\operatorname{Diag}_b g}_2^2
    &\lesssim
    \left(
        \sum_{0<\abs{k_1}\lesssim\sqrt{\Lambda}}
        \frac{1}{\abs{k_1}^2}
    \right)\norm{a}_2^2\\
    &\lesssim
    \norm{a}_2^2
    \begin{cases}
        \log(2+\Lambda),&d=2,\\
        \Lambda^{1/2},&d=3.
    \end{cases}
\end{align*}
Taking square roots proves \eqref{eq:diagonal-trace}.

For $b\in\{2,\ldots,r\}$ and $i\in[n]$, define
\begin{align*}
    Q_{b,i}(x)
    :=
    \sum_{\substack{
        i_2,\ldots,i_{b-1},i_{b+1},\ldots,i_r
        \in[n]\setminus\{i\}\\
        \text{pairwise distinct}
    }}
    (\operatorname{Diag}_b g)
    \left(
        x,X_{i_2},\ldots,X_{i_{b-1}},
        X_{i_{b+1}},\ldots,X_{i_r}
    \right).
\end{align*}
Then
\begin{align*}
    \left(G^\sharp-G_i^\sharp\right)(X_i)
    =
    \sum_{b=2}^rQ_{b,i}(X_i).
\end{align*}

Note that
\begin{align*}
    \E\left[
        \abs{Q_{b,i}(X_i)}^2
        \,\middle|\,X_i
    \right]
    &=
    (r-2)!(n-1)_{r-2}\\
    &\quad\times
    \int_{(\T^d)^{r-2}}
    \abs{
        (\operatorname{Diag}_b g)(X_i,\mathbf y)
    }^2
    \dd\mu^{\otimes(r-2)}(\mathbf y).
\end{align*}
Averaging over $X_i$ and summing over $i$ gives
\begin{align*}
    \sum_{i=1}^n
    \E\left[
        \abs{Q_{b,i}(X_i)}^2
    \right]
    =
    n(r-2)!(n-1)_{r-2}
    \norm{\operatorname{Diag}_b g}_2^2.
\end{align*}
By Cauchy--Schwarz and
$\sum_{i=1}^n\E\abs{d_i}^2\leq n\E W_{n,d}$,
\begin{align*}
    \left|
        \frac{2}{n}
        \E\sum_{i=1}^n
        d_i\cdot Q_{b,i}(X_i)
    \right|
    &\leq
    \frac{2}{n}
    \left(
        \E\sum_{i=1}^n\abs{d_i}^2
    \right)^{1/2}
    \left(
        \E\sum_{i=1}^n
        \abs{Q_{b,i}(X_i)}^2
    \right)^{1/2}\\
    &\lesssim
    \sqrt{(r-2)!(n-1)_{r-2}}\norm{a}_2
    \begin{cases}
        n^{-1/2}
        \sqrt{\log n\,\log(2+\Lambda)},
        &d=2,\\
        n^{-1/3}\Lambda^{1/4},
        &d=3.
    \end{cases}
\end{align*}
Summing over $b=2,\ldots,r$ and using $\norm{a}_2=1$ gives
\begin{align*}
    &\left|
        \frac{2}{n}\E\sum_{i=1}^n
        d_i\cdot
        \left(G_i^\sharp-G^\sharp\right)(X_i)
    \right|^2\\
    &\qquad\lesssim
    r^2(r-2)!(n-1)_{r-2}
    \begin{cases}
        n^{-1}\log n\,\log(2+\Lambda),
        &d=2,\\
        n^{-2/3}\Lambda^{1/2},
        &d=3
    \end{cases}\\
    &\qquad\lesssim
    r^2(r-2)!(n-1)_{r-2}
    \begin{cases}
        n^{-1}(\log n)^2,
        &d=2,\\
        n^{-4/9},
        &d=3.
    \end{cases}
\end{align*}

Combining the two estimates in
\eqref{eq:higher-order-response-split}, we obtain
\begin{align}\label{eq:higher-order-response-bound}
    \abs{\mathcal R_r(a)}^2
    \lesssim
    (r-2)!(n-1)_{r-2}
    \begin{cases}
        (r-1)n^{-1}(\log n)^3
        +r^2n^{-1}(\log n)^2,
        &d=2,\\
        (r-1)n^{-1/3}\log n
        +r^2n^{-4/9},
        &d=3.
    \end{cases}
\end{align}

Combining \eqref{eq:higher-order-low-response} with
\eqref{eq:higher-order-response-bound}, and using
\begin{align*}
    \frac{n^2(r-2)!(n-1)_{r-2}}{r!(n)_r}
    =
    \frac{n}{r(r-1)(n-r+1)}
    \lesssim \frac{1}{r(r-1)},
\end{align*}
since $r \leq R =o(n)$, we obtain the following bounds.
For $d=2$, we have
\begin{align*}
    &n^2\E\left[
        \left|
            \Pi_{\leq\Lambda}
            \sum_{r=3}^nP_rW_{n,2}
        \right|^2
    \right]\\
    &\qquad\lesssim
    n^{-1}(\log n)^3
    \sum_{r=3}^R\frac1r
    +
    n^{-1}(\log n)^2
    \sum_{r=3}^R\frac{r}{r-1}\\
    &\qquad\lesssim
    n^{-1}(\log n)^4
    +
    n^{-1}R(\log n)^2\\
    &\qquad\lesssim
    n^{-1}(\log n)^4
    +
    n^{-1+2\alpha}(\log n)^2
    \longrightarrow0,
\end{align*}
where we used $R\lesssim n^{2\alpha}$ and $\alpha<1/2$.

For $d=3$, we similarly have
\begin{align*}
    &n^2\E\left[
        \left|
            \Pi_{\leq\Lambda}
            \sum_{r=3}^nP_rW_{n,3}
        \right|^2
    \right]\\
    &\qquad\lesssim
    n^{-1/3}\log n
    \sum_{r=3}^R\frac1r
    +
    n^{-4/9}
    \sum_{r=3}^R\frac{r}{r-1}\\
    &\qquad\lesssim
    n^{-1/3}(\log n)^2
    +
    Rn^{-4/9}\\
    &\qquad\lesssim
    n^{-1/3}(\log n)^2
    +
    n^{2\alpha-4/9}
    \longrightarrow0,
\end{align*}
where we used $R\lesssim n^{2\alpha}$ and $\alpha<2/9$.

This proves \eqref{eq:higher-order-low-goal}. Combining it with the
high-frequency estimate in \eqref{eq:higher-order-spectral-split} proves the lemma.

\end{proof}

\subsection{The critical case: the error current is a divergence ($d=4$)}
\label{ssec:stress4}

In this section, the main objective is to establish the estimate in Lemma~\ref{lem:stress-square-function}, which controls the difference between the actual transport current and the solution to the linear approximation. 

The first objective is to improve previous arguments to make them hold at smaller scales. 
Indeed, the cutoff argument used in dimensions two and three becomes critical in
dimension four. 
Recall that we take $\Lambda=n^{2\alpha}$. In dimension two,
we may choose
\begin{align*}
    0<\alpha<\frac12,
\end{align*}
while in dimension three we may choose
\begin{align*}
    \frac16<\alpha<\frac29.
\end{align*}
Thus the admissible interval has width $1/2$ in dimension two and only
$1/18$ in dimension three. In dimension four, we work at the cutoff
\begin{align*}
    \Lambda=L^2=n^{1/2}.
\end{align*}
At this cutoff, Lemma \ref{lem:high-frequency} bounds the
high-frequency contribution to the variance by $O(n^{-2})$,
which is negligible compared with the target scale
$n^{-2}\log n$. The remaining low-frequency terms require
a sharper argument than the estimates used above.
We first express the error current as a divergence, and then
use this representation to control all frequency scales together.

Let $\pi_n$ be an optimal coupling between $\mu_n$ and $\mu$. Recall from
the notation section that $\df_x(y)$ denotes the source-minus-target
displacement from $y$ to $x$. For $\pi_n$-almost every $(x,y)$, let
\begin{align*}
    \gamma_{x,y}(t)
    :=
    y+t\,\df_x(y)
    \pmod{\Z^4},
    \qquad 0\leq t\leq1,
\end{align*}
be the minimizing segment from $y$ to $x$.

Recall that the current $\mathfrak{j} = \frac{1}{n}\sum_{i}d_i \delta_{X_i}$ records the displacement at the sample point:
\begin{align*}
    \langle \mathfrak{j},\varphi\rangle
    :=
    \int_{\T^4\times\T^4}
    \df_x(y)\cdot\varphi(x)\dd\pi_n(x,y).
\end{align*}
We also define the current $\mathfrak{j}^\flat$ by
\begin{align*}
    \langle \mathfrak{j}^\flat,\varphi\rangle
    :=
    \int_{\T^4\times\T^4}
    \df_x(y)\cdot\varphi(y)\dd\pi_n(x,y).
\end{align*}
Thus $\mathfrak{j}$ and $\mathfrak{j}^\flat$ record the same displacement at the two endpoints of
the transport segment.

Define the matrix-valued measure $S_1$ through its action on smooth
matrix-valued test functions $\Phi:\T^4\to\R^{4\times4}$ by
\begin{align*}
    \langle S_1,\Phi\rangle
    :=
    \int_0^1
    \int_{\T^4\times\T^4}
    \df_x(y)^\top
    \Phi\bigl(\gamma_{x,y}(t)\bigr)
    \df_x(y)
    \dd\pi_n(x,y)\dd t.
\end{align*}
Equivalently, for every Borel set $A\subseteq\T^4$, $S_1$ is defined by
\begin{align*}
    S_1(A)
    :=
    \int_0^1
    \int_{\T^4\times\T^4}
    \mathbf 1_{\{\gamma_{x,y}(t)\in A\}}
    \df_x(y)\df_x(y)^\top
    \dd\pi_n(x,y)\dd t.
\end{align*}

For a matrix-valued measure $S$, define its divergence by
\begin{align*}
    \langle\operatorname{div}S,\varphi\rangle
    :=
    -\langle S,\nabla\varphi\rangle
\end{align*}
for every smooth vector field $\varphi:\T^4\to\R^4$, where
\begin{align*}
    (\nabla\varphi)_{ij}
    :=
    \partial_j\varphi_i,
    \qquad i,j\in[4].
\end{align*}
When $S$ is smooth, this is equivalent to
\begin{align*}
    (\operatorname{div}S)_i
    =
    \sum_{j=1}^4\partial_jS_{ij},
    \qquad i\in[4].
\end{align*}
For every such $\varphi$, the fundamental theorem of calculus gives
\begin{align*}
    \langle \mathfrak{j}-\mathfrak{j}^\flat,\varphi\rangle
    &=
    \int_{\T^4\times\T^4}
    \df_x(y)\cdot
    \bigl(\varphi(x)-\varphi(y)\bigr)
    \dd\pi_n(x,y)\\
    &=
    \int_0^1
    \int_{\T^4\times\T^4}
    \df_x(y)^\top
    \nabla\varphi\bigl(\gamma_{x,y}(t)\bigr)
    \df_x(y)
    \dd\pi_n(x,y)\dd t\\
    &=
    \langle S_1,\nabla\varphi\rangle\\
    &=
    \langle-\operatorname{div}S_1,\varphi\rangle.
\end{align*}
Therefore,
\begin{align}\label{eq:first-stress-identity}
    \mathfrak{j}-\mathfrak{j}^\flat=-\operatorname{div}S_1
\end{align}
as vector-valued distributions on $\T^4$. We next compare $\mathfrak{j}^\flat$ with $-\nabla u$, where $u$ is the mean-zero
solution of \( \Delta u=\mu_n-\mu.\)
Define the matrix-valued measure $S_0$ by
\begin{align*}
    \langle S_0,\Phi\rangle
    &:=
    \int_0^1(1-t)
    \int_{\T^4\times\T^4}
    \df_x(y)^\top
    \Phi\bigl(\gamma_{x,y}(t)\bigr)
    \df_x(y)
    \dd\pi_n(x,y)\dd t.
\end{align*}
Equivalently, for every Borel set $A\subseteq\T^4$, $S_0$ is defined by
\begin{align*}
    S_0(A)
    :=
    \int_0^1(1-t)
    \int_{\T^4\times\T^4}
    \mathbf 1_{\{\gamma_{x,y}(t)\in A\}}
    \df_x(y)\df_x(y)^\top
    \dd\pi_n(x,y)\dd t.
\end{align*}
For every smooth function $\psi:\T^4\to\R$, second-order Taylor expansion
along $\gamma_{x,y}$ gives
\[
    \psi(x)-\psi(y)
    =
    \df_x(y)\cdot\nabla\psi(y)
    +
    \int_0^1(1-t)
    \df_x(y)^\top
    \nabla^2\psi\bigl(\gamma_{x,y}(t)\bigr)
    \df_x(y)\dd t.
\]
Therefore,
\begin{align*}
    \langle\mu_n-\mu,\psi\rangle
    &=
    \int_{\T^4\times\T^4}
    \bigl(\psi(x)-\psi(y)\bigr)
    \dd\pi_n(x,y)\\
    &=
    \langle \mathfrak{j}^\flat,\nabla\psi\rangle
    +
    \langle S_0,\nabla^2\psi\rangle\\
    &=
    \left\langle
        -\operatorname{div}\mathfrak{j}^\flat
        +
        \operatorname{div}\operatorname{div}S_0,
        \psi
    \right\rangle.
\end{align*}
Hence,
\begin{align}\label{eq:second-stress-identity}
    \mu_n-\mu
    =
    -\operatorname{div}\mathfrak{j}^\flat
    +
    \operatorname{div}\operatorname{div}S_0.
\end{align}
We now combine \eqref{eq:first-stress-identity} and \eqref{eq:second-stress-identity}. For a vector-valued distribution $V$, define
\begin{align*}
    \mathsf QV
    :=
    \nabla\Delta^{-1}\operatorname{div}V,
\end{align*}
where the zero Fourier mode is set equal to zero. Equivalently,
\begin{align*}
    \widehat{(\mathsf QV)}_i(k)
    =
    \sum_{\ell=1}^4
    \frac{k_ik_\ell}{\abs{k}^2}
    \widehat V_\ell(k),
    \qquad k\neq0.
\end{align*}
Note that, for every function $f$ and every vector field $V$ on $\T^4$,
\begin{align*}
    \mathsf Q(\nabla f)=\nabla f,
    \qquad
    \mathsf QV=0
    \quad\text{if }\operatorname{div}V=0.
\end{align*}

Let $T:\T^4\to\T^4$ be the optimal transport map from $\mu$ to $\mu_n$.
By the characterization of the optimal map
\cite[Theorem 9]{McC01}, there exists a
Lipschitz function $\phi:\T^4\to\R$ such that
\begin{align*}
    \nabla\phi(y)
    =
    -\df_{T(y)}(y)
\end{align*}
for $\mu$-almost every $y$. Consequently,
\(
    \mathfrak{j}^\flat
    =
    -\nabla\phi\,\mu,
\)
and thus 
\(
    \mathsf Q\mathfrak{j}^\flat=\mathfrak{j}^\flat.
\)

Applying $\nabla\Delta^{-1}$ to
\eqref{eq:second-stress-identity} gives
\begin{align*}
    \nabla u
    =
    -\mathsf Q\mathfrak{j}^\flat
    +
    \mathsf Q\operatorname{div}S_0.
\end{align*}
Therefore,
\begin{align}\label{eq:target-current-error}
    \mathfrak{j}^\flat+\nabla u
    =
    \mathsf Q\operatorname{div}S_0.
\end{align}

For a matrix-valued distribution $S$, define $\mathcal KS$ by
\begin{align*}
    \widehat{(\mathcal KS)}_{ij}(k)
    &:=
    \sum_{\ell=1}^4
    \frac{k_ik_\ell}{\abs{k}^2}
    \widehat S_{\ell j}(k),
    \qquad k\neq0,\\
    \widehat{\mathcal KS}(0)&:=0.
\end{align*}
Then
\begin{align*}
    \operatorname{div}(\mathcal KS)
    =
    \mathsf Q\operatorname{div}S.
\end{align*}
Combining \eqref{eq:first-stress-identity} and
\eqref{eq:target-current-error}, we obtain
\begin{align}\label{eq:stress-identity}
    \mathfrak{j}+\nabla u
    =
    \operatorname{div}\Sigma,
    \qquad
    \Sigma:=-S_1+\mathcal KS_0.
\end{align}
This agrees with the intuition from Section \ref{ssec:intuition} that
$\mathfrak{j}\approx-\nabla u$.  The error is the divergence of
$\Sigma$. The measures $S_0,S_1$ are carried by the transport segments, but
$\mathcal K$ is nonlocal. We control $\Sigma$ after averaging at the scale
$L^{-1}=n^{-1/4}$. Let $q_a$ be the heat kernel on $\T^4$ with
\begin{equation}
\label{eq: HeatKern}
    \widehat q_a(k):=e^{-4\pi^2a^2\abs{k}^2}.
\end{equation}
Thus $q_a*\Sigma$ is the matrix field obtained by averaging $\Sigma$ over
distances of order $a$, defined componentwise by distributional duality:
\begin{align*}
    (q_a*\Sigma)_{ij}(x)
    :=
    \langle\Sigma_{ij},q_a(x-\cdot)\rangle.
\end{align*}
Its squared $L^2$ norm is
\begin{align*}
    \norm{q_a*\Sigma}_{L_x^2}^2
    :=
    \int_{\T^4}
    \abs{(q_a*\Sigma)(x)}^2
    \dd\mu(x),
\end{align*}
where $\abs{A}^2:=\sum_{i,j=1}^4\abs{A_{ij}}^2$.

With these elements at hand, we can now develop the various lemmas necessary. 
First, we establish the gradient estimate at radii of scale $L$ in Lemma~\ref{lem:terminal-gradient}. This new result can then be combined with the work of Goldman and Huesmann to prove Proposition~\ref{prop:radius-support}.

\begin{lemma}[Terminal-scale gradient]
\label{lem:terminal-gradient}
Let $d\geq3$, $L=n^{1/d}$, and let $Y_1,\ldots,Y_n$ be independent
with Haar law $m_L=L^{-d}\mathcal L_L$ on $\T_L^d$, where
$\mathcal L_L$ denotes Lebesgue measure. For the periodisation
$\eta_\rho^{(L)}$ of $\rho^{-d}\eta(\cdot/\rho)$, set
\[
    v_{\rho,w}^Y
    :=\nabla\Delta^{-1}\left[
        \eta_\rho^{(L)}*
        \left(\sum_{i=1}^n\delta_{Y_i}-\mathcal L_L\right)
    \right](w),
\]
with the zero Fourier mode removed. For every $\alpha\in(0,1]$ and
$2\leq p<\infty$,
\begin{align}\label{eq:terminal-gradient}
    \sup_{w\in\T_L^d}
    \bigl(\E|v_{\rho,w}^Y|^p\bigr)^{1/p}
    \leq C_{\alpha,p,d}\rho^{1-d/2},
    \qquad \alpha L\leq\rho\leq L,
\end{align}
where the constant also depends on the fixed mollifier $\eta$.
\end{lemma}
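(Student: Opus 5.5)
The quantity $v^Y_{\rho,w}$ is a centered sum of i.i.d.\ vectors, so I plan to bound its moments directly through its Fourier representation and Rosenthal's inequality. By translation invariance of the Haar law on $\T_L^d$, the law of $v^Y_{\rho,w}$ does not depend on $w$, and I take $w=0$. Write
\[
    v^Y_{\rho,0}=\sum_{i=1}^n\bigl(K_\rho(-Y_i)-\textstyle\int K_\rho\dd m_L\bigr),
    \qquad
    K_\rho:=\nabla\Delta^{-1}\eta^{(L)}_\rho
\]
(zero mode removed, so $\int K_\rho\dd m_L=0$). The summands $\xi_i:=K_\rho(-Y_i)$ are i.i.d., centered and bounded.

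Rosenthal's inequality for $p\ge2$ then gives
\[
    \bigl(\E|v|^p\bigr)^{1/p}\le C_p\Bigl[\bigl(n\E|\xi_1|^2\bigr)^{1/2}+\bigl(n\E|\xi_1|^p\bigr)^{1/p}\Bigr].
\]
The key step is therefore to bound $\norm{K_\rho}_{L^2(m_L)}$ and $\norm{K_\rho}_{L^\infty}$.

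For the $L^2$ norm, I use Parseval on $\T^d_L$. The frequencies are $k\in L^{-1}\Z^d\setminus\{0\}$, and the multiplier is $\frac{ik}{2\pi|k|^2}\widehat\eta(\rho k)$, so
\[
    n\E|\xi_1|^2=L^d\norm{K_\rho}_{L^2(m_L)}^2\lesssim L^d\sum_{k\in L^{-1}\Z^d\setminus\{0\}}|k|^{-2}|\widehat\eta(\rho k)|^2 .
\]
Since $\widehat\eta$ is Schwartz, the terms decay rapidly once $|k|\gtrsim\rho^{-1}$. Because $\rho\ge\alpha L$, only $O_\alpha(1)$ lattice points with $|k|\ge L^{-1}$ contribute non-negligibly, and the tail is summable. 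This yields $\lesssim_\alpha L^d\cdot L^2=L^{d+2}$. Since $\rho\asymp L$, this equals $\rho^{2}L^{d}$, whereas the target squared is $\rho^{2-d}$.

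At this point I realise the normalization must be checked carefully. Here $\Delta^{-1}$ on $\T^d_L$ applied to $\sum\delta_{Y_i}-\mathcal L_L$ has Fourier coefficients $\widehat f(k)=L^{-d}\int f e^{-2\pi ik\cdot x}\dd x$ under the normalized measure. Redoing the computation, $\xi_1=\nabla\Delta^{-1}\eta_\rho^{(L)}(-Y_1)$ in unnormalized Lebesgue terms, with Fourier series $L^{-d}\sum_k \frac{-ik}{2\pi|k|^2}\widehat\eta(\rho k)e^{2\pi ik\cdot x}$. Parseval under $m_L$ then gives
\[
    \E|\xi_1|^2=L^{-2d}\sum_k\frac{|\widehat\eta(\rho k)|^2}{4\pi^2|k|^2}\lesssim_\alpha L^{-2d}L^{2}.
\]
Hence $n\E|\xi_1|^2\lesssim L^{2-d}\asymp_\alpha\rho^{2-d}$, which matches the claim. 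Equivalently, one can compare with the whole-space Riemann sum $\int|\xi|^{-2}|\widehat\eta(\rho\xi)|^2\dd\xi\asymp\rho^{2-d}$; this comparison also works uniformly for $\rho$ comparable to $L$, provided the discrete sum is handled directly.

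For the $L^\infty$ bound, the same Fourier series with absolute values gives $\norm{K_\rho}_\infty\le L^{-d}\sum_k|k|^{-1}|\widehat\eta(\rho k)|\lesssim_\alpha L^{1-d}$. Therefore
\[
    \bigl(n\E|\xi_1|^p\bigr)^{1/p}\le n^{1/p}\norm{K_\rho}_\infty^{(p-2)/p}\bigl(\E|\xi_1|^2\bigr)^{1/p}\lesssim L^{d/p}L^{(1-d)(p-2)/p}L^{(1-d)2/p}=L^{1-d+d/p}.
\]
Since $p\ge2$ and $d\ge 3$, this is at most $L^{1-d/2}$. Combining the two Rosenthal terms gives $C_{\alpha,p,d}\rho^{1-d/2}$.

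The main obstacle is bookkeeping: making the lattice sums uniform over $\rho\in[\alpha L,L]$, with constants depending only on $\alpha$ and $\eta$, and tracking the $L^{-d}$ normalization of Fourier coefficients on $\T^d_L$. Both are routine once the convention is fixed, since $\rho L^{-1}\in[\alpha,1]$ reduces everything to a compact family of smooth sums.
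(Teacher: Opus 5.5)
Your proposal is correct and is essentially the paper's argument: you write $v^Y_{\rho,w}$ as a sum of $n$ i.i.d.\ centered evaluations of the kernel $\nabla\Delta^{-1}\eta^{(L)}_\rho$, bound its sup norm by $\lesssim_\alpha L^{1-d}$ using the rapid decay of $\widehat\eta$ once $\rho/L\in[\alpha,1]$, and conclude with Rosenthal's inequality. Your corrected Parseval computation with the $L^{-d}$ coefficient normalization is right, and the first $L^{d+2}$ attempt was rightly discarded. The only difference is that you bound the $L^2$ and $L^\infty$ Rosenthal terms separately, whereas the paper uses only $(\sqrt n+n^{1/p})\norm{K_{\rho,L}}_\infty\lesssim L^{d/2}L^{1-d}=L^{1-d/2}\le\rho^{1-d/2}$, so your extra $L^2$ estimate is harmless but unnecessary.
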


\begin{proof}[Proof of Lemma~\ref{lem:terminal-gradient}]
Put $K_{\rho,L}:=\nabla\Delta^{-1}\eta_\rho^{(L)}$, with zero Fourier
mode removed. Then
\begin{align}\label{eq:terminal-gradient-sum}
    v_{\rho,w}^Y=\sum_{i=1}^nK_{\rho,L}(w-Y_i),
\end{align}
a sum of independent centered vectors. Scaling and the rapid Fourier
decay of $\eta$ give, uniformly for $\rho/L\in[\alpha,1]$,
\begin{align}\label{eq:terminal-kernel-bounds}
    K_{\rho,L}(Lz)&=L^{1-d}K_{\rho/L,1}(z),\\
    \norm{K_{\rho,L}}_\infty&\leq C_{\alpha,d}L^{1-d}.
\end{align}
Rosenthal's inequality applied componentwise to
\eqref{eq:terminal-gradient-sum} therefore yields
\begin{align*}
    \bigl(\E|v_{\rho,w}^Y|^p\bigr)^{1/p}
    &\leq C_{p,d}\bigl(\sqrt n+n^{1/p}\bigr)
                    \norm{K_{\rho,L}}_\infty\\
    &\leq C_{\alpha,p,d}L^{1-d/2}
     \leq C_{\alpha,p,d}\rho^{1-d/2},
\end{align*}
since $n=L^d$, $p\geq2$, and $\rho\leq L$. This proves
\eqref{eq:terminal-gradient}.
\end{proof}

We keep the physical coordinates of \eqref{eq:dilate}: $\widehat\pi_n$ has first marginal
$\widehat\mu_n:=\sum_{i=1}^n\delta_{LX_i}$ and second marginal Lebesgue measure on $\T^4_L$, both of
mass $n=L^4$. For a physical radius $\rho$, let $\eta^{(L)}_\rho$ be the periodisation on $\T^4_L$ of
$\rho^{-4}\eta(\cdot/\rho)$, with $\eta$ the mollifier fixed in \eqref{eq:mollifier}, let
$\widehat u_\rho$ be the mean-zero solution of
\begin{align*}
    \Delta\widehat u_\rho
    =
    \eta^{(L)}_\rho*(\widehat\mu_n-1),
\end{align*}
and set $v_{\rho,w}:=\nabla\widehat u_\rho(w)$, the averaged shift predicted by the linear model at
$w$ and scale $\rho$.

In the application above, take $Y_i=LX_i$. These variables are
independent with common law $m_L$, and $v_{\rho,w}^Y$ then coincides
with the previously defined field $v_{\rho,w}$.

\begin{proposition}[Regularity radius and support estimate]
\label{prop:radius-support}
There are a constant $c>0$, constants $C,C_p<\infty$, and jointly measurable random radii
$r_*(w)\geq1$, all uniform in $L$ and $w\in\T^4_L$, such that
\begin{align}\label{eq:radius-moments}
    \E\left[e^{c\,r_*(w)^2}\right]\leq C,
    \qquad
    \norm{v_{\rho,w}}_{L^p(\Omega)}\leq C_p\,\rho^{-1}
    \quad(1\leq\rho\leq L),
\end{align}
and such that, on the event $r_*(w)<L$, whenever $r_*(w)\leq\rho\leq L$,
\begin{align}\label{eq:support-estimate}
    \sup\Big\{
        \abs{x-(y-v_{\rho,w})}:(x,y)\in\operatorname{Spt}\widehat\pi_n,\
        x\in B_\rho(w)
    \Big\}
    \leq
    C\rho^{2/3}r_*(w)^{1/3},
\end{align}
the differences being taken in coherent lifts: choose a lift of $x$ in $B_\rho(w)$
and lift $y$ so that $x-y$ is the minimizing displacement of the transport segment.
Changing the lift translates both endpoints by the same vector in $L\Z^4$.
\end{proposition}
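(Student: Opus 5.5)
The plan is to import the local regularity theory of Goldman--Huesmann--Otto \cite{GHO21,GH22} in physical coordinates on $\T^4_L$, and to use Lemma~\ref{lem:terminal-gradient} to handle radii comparable to $L$, where the estimates of \cite{GH22} are not stated.

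\emph{Step 1: the moment bound on $v_{\rho,w}$.} Split the range $1\leq\rho\leq L$ at $c_{\rm GH}L$.
\begin{itemize}
\item For $1\leq\rho\leq c_{\rm GH}L$, $v_{\rho,w}=\nabla\widehat u_\rho(w)$ is a sum of $n$ independent centered vectors $\sum_i K_{\rho,L}(w-LX_i)$. Its $L^p$ norm is then controlled by Rosenthal's inequality, the bound of \cite[Lemma~2.3]{GH22} in dimension four, or a direct Fourier computation. The variance is $\sum_{k\neq0}|\widehat\eta(\rho k/L)|^2/(4\pi^2|k|^2)$ on the dilated lattice $L^{-1}\Z^4$, which is of order $\int_{|\xi|\gtrsim\rho^{-1}}|\xi|^{-2}\dd\xi/\ldots\sim\rho^{-2}$ in $d=4$. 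The higher moments come from Rosenthal's inequality, since $\norm{K_{\rho,L}}_\infty\lesssim\rho^{-3}$ and $\rho\geq1$.
\item For $c_{\rm GH}L\leq\rho\leq L$, Lemma~\ref{lem:terminal-gradient} with $\alpha=c_{\rm GH}$ and $d=4$ gives exactly $C_p\rho^{-1}$.
\end{itemize}

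\emph{Step 2: constructing $r_*(w)$.} Follow the harmonic approximation/Campanato iteration of \cite{GHO21}. First define the random scale $r_*(w)$ as the smallest dyadic radius $r\geq1$ such that, for every dyadic $\rho\in[r,L]$, the local data $\rho^{-6}\,W^2_{B_\rho(w)}(\widehat\mu_n,1)+\rho^{-2}\ldots$ fall below the smallness threshold $\epsilon_0$ of the one-step lemma. Set $r_*=L$ if no such radius exists.

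The Gaussian-type tail $\E e^{cr_*^2}\leq C$ should follow from concentration of the local transport cost at scale $\rho$ in $d=4$. The normalized local excess is $\rho^{-2}\cdot(\text{fluctuation of }\rho^{-4}\text{-averaged density})$. Bernstein's inequality for $\eta_\rho*\widehat\mu_n-1$ at scale $\rho$ gives tails $\exp(-c\rho^4\epsilon^2)$, which is much better than needed. Summing over dyadic $\rho\geq r$ and over $O(1)$ covering balls yields $\mathbb P(r_*>r)\lesssim e^{-cr^2}$. I would invoke the $d\geq3$ versions of \cite[Lemma~3.1, Prop.~4.8]{GH22} to obtain the excess bound below $c_{\rm GH}L$, and Step~1 together with a direct global $L^\infty$ bound at scales $\geq c_{\rm GH}L$ to close the top of the iteration. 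Joint measurability comes from the fact that $r_*$ is an infimum over countably many measurable events.

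\emph{Step 3: the support estimate.} On $\{r_*(w)<L\}$ and for $\rho\geq r_*$, the Campanato iteration gives an affine approximation of the transport on $B_\rho(w)$ whose shift is $v_{\rho,w}$, up to errors controlled by the excess $E_\rho\lesssim(r_*/\rho)^{\beta}$. The $L^\infty$ support bound then follows from the standard $L^2\to L^\infty$ estimate for monotone (cyclically monotone) supports: $\sup|x-y+v|\lesssim\rho\,E^{1/(d+2)}$. With $d=4$ and $E\lesssim(r_*/\rho)^2$ this gives $\rho\cdot(r_*/\rho)^{1/3}=\rho^{2/3}r_*^{1/3}$, which is precisely the exponent in \eqref{eq:support-estimate}. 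Here the excess normalization is $\rho^{-2}\times$mean squared displacement.

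I expect the main obstacle to be matching the excess decay to $(r_*/\rho)^2$ uniformly up to $\rho=L$, where periodicity interferes and \cite{GH22} only covers $\rho\leq c_{\rm GH}L$. For the top scales I would first try a direct argument: for $\rho\geq c_{\rm GH}L$, use the global bound $W_2^2(\widehat\mu_n,1)/n\lesssim L^{-2}\cdot$(polylog), valid in $d=4$ via \cite{MBNW24}, together with Step~1 and the same $L^2\to L^\infty$ lemma.
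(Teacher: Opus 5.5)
Your Step~1 is the paper's argument. For $\rho$ separated from $L$ the moments of $v_{\rho,w}$ come from \cite[Lemma~3.1]{GH22}. Your Rosenthal computation also works: $\int_{\T^4_L}\abs{K_{\rho,L}}^2\asymp\rho^{-2}$, with the mass at frequencies $\abs{\xi}\lesssim\rho^{-1}$ (not $\gtrsim$), and $(\int\abs{K_{\rho,L}}^p)^{1/p}\lesssim\rho^{4/p-3}\le\rho^{-1}$. For $\rho$ comparable to $L$ the bound comes from Lemma~\ref{lem:terminal-gradient} with $d=4$. Beyond that, the paper does not construct $r_*$ at all. It takes the radius, its exponential moment, and the support estimate for all $r_*\le\rho\le L$ directly from \cite[Theorem~4.5 and Proposition~4.7]{GH22}, with rate function $\beta\equiv1$ in $d=4$. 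Your Step~3 (excess $\lesssim(r_*/\rho)^2$ fed into the $L^2\to L^\infty$ bound with exponent $1/(d+2)$) describes the mechanism of that proposition, so there is no conflict there. The problem is Step~2: your rebuild of Theorem~4.5 does not prove $\E e^{cr_*^2}\le C$.

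Two things fail. First, Bernstein's inequality for $\eta_\rho*\widehat\mu_n(w)-1$ controls a mollified density at one point. The iteration, however, is driven by smallness of the local data term $\rho^{-6}W^2_{B_\rho(w)}$ and of the excess of the plan itself. In $d=4$ the data term has size $\rho^{-2}$ because every atom in $B_\rho(w)$ pays an $O(1)$ unit-scale matching cost, so its tails are not those of $\eta_\rho*\widehat\mu_n(w)$. Second, and decisively, the top of the iteration fails. On the event that the excess at scale $\asymp L$ exceeds the threshold, $r_*=L$, so $\E e^{cr_*^2}\le C$ forces that event to have probability $\lesssim e^{-cL^2}$. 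The ingredients you propose cannot give this:
\begin{itemize}
\item an expectation bound from \cite{MBNW24} gives only $O(L^{-2})$ after Markov;
\item the moment bounds on $v_{\rho,w}$ say nothing about the excess;
\item the deterministic displacement bound $\le L$ only gives excess $O(1)$.
\end{itemize}
Also, the correctly normalized quantity is $L^{-6}W_2^2(\widehat\mu_n,1)=W_{n,4}\lesssim L^{-2}$, with no logarithm in $d=4$, whereas $W_2^2(\widehat\mu_n,1)/n\asymp1$.

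What is missing is exponential concentration of the global and local costs. For instance, moving one atom changes $W_2^2(\widehat\mu_n,1)$ by at most $L^2$, so McDiarmid gives $\mathbb P\bigl(L^{-6}W_2^2(\widehat\mu_n,1)>\epsilon\bigr)\le e^{-c\epsilon^2L^4}$ for large $L$. An analogous local estimate is needed for the data terms on balls. Alternatively, simply cite \cite[Theorem~4.5]{GH22} as the paper does. Finally, identifying the shift of the iterated affine approximation with the Poisson prediction $v_{\rho,w}$ is part of the content of \cite[Proposition~4.7]{GH22`}; you assert it rather than derive it.
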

 
Estimate \eqref{eq:support-estimate} is \cite[Proposition 4.7]{GH22} with their rate function
$\beta\equiv1$, which is the value in dimension four, and the exponent $2/3$ is their
$\rho(r_*^2/\rho^2)^{1/(d+2)}$ at $d=4$; the radius and its stretched-exponential moment are
\cite[Theorem 4.5]{GH22}, and the gradient moments at scales separated from $L$ are
\cite[Lemma 3.1]{GH22}. 
 
The gradient moments in \eqref{eq:radius-moments} are proved in \cite{GH22} for $\rho$ separated
from the system size. The remaining scales, where $\rho$ is comparable to $L$, are covered
by Lemma~\ref{lem:terminal-gradient}.

 \begin{lemma}\label{lem:segment-length}
For $(x,y)\in\operatorname{Spt}\widehat\pi_n$ let $\widehat\gamma_{x,y}$ be the transport segment of
the dilated plan and $\ell(x,y)$ its length. For $w\in\T^4_L$ and $R\geq1$, let
\begin{align*}
    D_{w,R}
    :=
    \sup\big\{
        \ell(x,y):(x,y)\in\operatorname{Spt}\widehat\pi_n,\
        \widehat\gamma_{x,y}\cap B_R(w)\neq\varnothing
    \big\}.
\end{align*}
 Then, for every
$p<\infty$,
\begin{align}\label{eq:segment-moments}
    \sup_{L\geq1}\
    \sup_{w\in\T^4_L}\
    \norm{D_{w,R}}_{L^p(\Omega)}
    \leq
    C_p(1+R).
\end{align}
\end{lemma}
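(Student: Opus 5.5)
Fix $w\in\T^4_L$ and $R\geq1$. The plan is to convert the support estimate of Proposition~\ref{prop:radius-support} into a pointwise bound on segment lengths near $w$, and then take moments using \eqref{eq:radius-moments}. The geometric starting point is that a segment $\widehat\gamma_{x,y}$ meeting $B_R(w)$ has its atom endpoint $x$ within distance $\ell(x,y)$ of $B_R(w)$. This suggests working with a dyadic family of radii. For $\rho\geq 2R$, segments of length at most $\rho/2$ that meet $B_R(w)$ have their atom in $B_\rho(w)$. Proposition~\ref{prop:radius-support} at scale $\rho$ then gives
\[
|x-y|\leq |v_{\rho,w}|+C\rho^{2/3}r_*(w)^{1/3}.
\]

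The first step is to rule out long segments by a self-consistency (bootstrap) argument. Set $\rho_0:=\max\{2R,\,C' r_*(w)\}$ and suppose first that $\rho_0\leq L$. I would show that every segment meeting $B_R(w)$ has length at most $\rho_0/2$. Suppose instead that some segment has length $\ell>\rho_0/2$. Let $\rho$ be the dyadic scale with $\ell\in(\rho/4,\rho/2]$, or take $\rho=L$ if no such scale is $\leq L$. Then the atom lies in $B_\rho(w)$, and the support estimate gives
\[
\rho/4<\ell\leq |v_{\rho,w}|+C\rho^{2/3}r_*^{1/3}.
\]
Since $\rho\geq\rho_0\geq C'r_*$, we have $C\rho^{2/3}r_*^{1/3}\leq\rho/8$ once $C'$ is large. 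So such a segment can only exist if $|v_{\rho,w}|\geq\rho/8$ for some dyadic $\rho\in[\rho_0,L]$. On the torus of side $L$ every segment has length at most $\sqrt4\,L/2=L$. This caps the relevant scales at $L$, and the argument for $\rho=L$ is the same.

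This gives the deterministic bound
\[
D_{w,R}\leq \rho_0+L\,\mathbf 1\{r_*(w)\geq L/C'\}+\sum_{\rho\ \mathrm{dyadic},\,1\leq\rho\leq L}L\,\mathbf 1\{|v_{\rho,w}|\geq\rho/8\}.
\]
The second term covers the event where the proposition is unavailable, and there we use the trivial bound $D_{w,R}\leq L$.

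The second step is taking moments. We have $\norm{\rho_0}_{L^p}\leq 2R+C'\norm{r_*}_{L^p}\leq C_p(1+R)$ by the Gaussian moments of $r_*$. The exceptional term is controlled by
\[
L\,\mathbb P(r_*\geq L/C')^{1/p}\leq L e^{-cL^2/p}\cdot C,
\]
which is bounded uniformly in $L$. For the dyadic sum, Markov's inequality with a high moment $q$ of $v_{\rho,w}$ from \eqref{eq:radius-moments} gives
\[
\mathbb P(|v_{\rho,w}|\geq\rho/8)\leq (8C_q)^q\rho^{-2q}.
\]
Each term therefore has $L^p$ norm at most $L\,C\rho^{-2q/p}$. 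The main obstacle is right here: the factor $L$ does not decay in $\rho$, so small dyadic scales (say $\rho\sim1$) are not controlled by this crude bound. To fix this, I would replace the factor $L$ in the dyadic term at scale $\rho$ by $\rho$ itself, since a violation first detected at scale $\rho$ only forces a length of order $\rho$ before the next scale takes over. That is, define $\ell$-localization iteratively: the segment length is at most $2\rho'$, where $\rho'$ is the largest dyadic scale with $|v_{\rho',w}|\geq\rho'/8$.

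This gives
\[
D_{w,R}\lesssim\rho_0+\max_{\rho}\rho\,\mathbf 1\{|v_{\rho,w}|\geq\rho/8\}+\text{exceptional}.
\]
The middle term has $p$-th moment at most $\sum_\rho\rho^{p}\,C\rho^{-2q}$, which is summable for $q>p$. The resulting bound is uniform in $L$ and $w$. Together these yield \eqref{eq:segment-moments}.
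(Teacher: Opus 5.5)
Your argument is correct and is essentially the paper's proof. Both use dyadic scales, apply the support estimate \eqref{eq:support-estimate} at the scale matching the segment length, and combine the Gaussian tail of $r_*(w)$ with Markov bounds on high moments of $v_{\rho,w}$, summed over scales. The differences are bookkeeping only: you choose the scale segment by segment above the random cutoff $\rho_0=\max\{2R,C'r_*(w)\}$ and obtain a pointwise bound on $D_{w,R}$, whereas the paper bootstraps on the event $\{r\le D<2r\}$, which localizes all atoms in $B_{4r}(w)$ at once, and treats $\{D\ge L/12\}$ separately. Your ``fix'' needs no iteration: your first step already shows that the violating scale $\rho$ attached to a long segment satisfies $\ell\le\rho/2$, so $D_{w,R}\le\max\{\rho_0/2,\ \max_\rho \rho\,\mathbf{1}\{|v_{\rho,w}|\geq\rho/8\}\}$ holds directly on $\{\rho_0\le L\}$, and the crude factor $L$ was never needed.
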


The proof of Lemma \ref{lem:segment-length} uses the regularity radius and the support estimate of
\cite{GH22}, which are available in every dimension.

\begin{proof}[Proof of Lemma~\ref{lem:segment-length}]
Fix $p\in(0,\infty)$, $L\geq1$, $w\in\T_L^4$, and $R\geq1$, and
write $D:=D_{w,R}$. Every minimizing segment on $\T_L^4$ has length
at most $L$, so $D\leq L$. Let $C_0$ be the constant in \eqref{eq:support-estimate}. We first choose $\varepsilon$ small enough that the good events below
force segment lengths below $r$ in Step 1 and below $L/12$ in
Step 2. To ensure both conclusions, it suffices to set
$\varepsilon\in(0,1)$ sufficiently small that
\[
    C_0 4^{2/3}\varepsilon^{1/3}+\varepsilon<\frac1{12}.
\]

\emph{Step 1: segment lengths between $r$ and $2r$.}
Let $r\geq R$ satisfy $12r\leq L$. On the event $\{D<2r\}$,
every transport segment meeting $B_R(w)$ has length less than $2r$.
Choose a point where the segment meets $B_R(w)$ and lift it
to the Euclidean ball $B_R(\widetilde w)$, where $\widetilde w$
is a fixed lift of $w$. Lift the entire transport segment
continuously through this point. Its atomic endpoint
$\widetilde x$ then satisfies
\[
    |\widetilde x-\widetilde w|
    \leq R+\ell(x,y)<R+2r\leq3r.
\]
\[
    x\in B_{R+2r}(w)\subseteq B_{3r}(w)\subseteq B_{4r}(w).
\]
Suppose also that
\[
    r_*(w)<\varepsilon r,
    \qquad |v_{4r,w}|<\varepsilon r.
\]
Then $r_*(w)<L$ and $r_*(w)\leq4r\leq L$, so
\eqref{eq:support-estimate} applies at radius $4r$. For every such
segment it gives
\begin{align*}
    \ell(x,y)
    =|x-y|
    \leq |x-(y-v_{4r,w})|+|v_{4r,w}|
    \leq
    \bigl(C_0 4^{2/3}\varepsilon^{1/3}+\varepsilon\bigr)r
    <r.
\end{align*}
The bound is uniform over all segments meeting $B_R(w)$, so taking
the supremum yields $D<r$. Consequently,
\[
    \{r\leq D<2r\}
    \subseteq
    \{r_*(w)\geq\varepsilon r\}
    \cup
    \{|v_{4r,w}|\geq\varepsilon r\}.
\]
By \eqref{eq:radius-moments} and Markov's inequality, for every
$q\geq2$,
\[
    \mathbb P\{r\leq D<2r\}
    \leq C e^{-c\varepsilon^2r^2}
       +\left(\frac{C_q(4r)^{-1}}{\varepsilon r}\right)^q.
\]
Since $r\geq1$ and $q$ can be chosen arbitrarily large, for every
$M>0$ this implies
\begin{equation}\label{eq:crossing}
    \mathbb P\{r\leq D<2r\}
    \leq C_Mr^{-M},
    \qquad R\leq r,\quad 12r\leq L.
\end{equation}

\emph{Step 2: segment lengths comparable to $L$.}
For the fixed point $w$, almost surely every atom has a
representative in $w+[-L/2,L/2)^4$ lying in $B_L(w)$.
Use the corresponding coherent lifts of the transport segments, as described in the previous step.
On the event
\[
    r_*(w)<\varepsilon L,
    \qquad |v_{L,w}|<\varepsilon L,
\]
the support estimate at radius $L$ of Proposition~\ref{prop:radius-support} gives, for every transport segment,
\[
    \ell(x,y)
    \leq C_0 L^{2/3}r_*(w)^{1/3}+|v_{L,w}|
    \leq (C_0\varepsilon^{1/3}+\varepsilon)L
    <\frac{L}{12}.
\]
Taking the supremum and using \eqref{eq:radius-moments} again gives,
for every $M>0$,
\[
    \mathbb P\{D\geq L/12\}
    \leq C e^{-c\varepsilon^2L^2}+C_ML^{-M}
    \leq C_M L^{-M}.
\]

\emph{Step 3: summing the length scales.}
Choose $M>p$ and set $r_j:=2^jR$ for $j\geq0$.
If $D\geq R$ and $D<L/12$, then $D$ belongs to some interval
$[r_j,2r_j)$ with $12r_j\leq L$. Using $D\leq L$,
\eqref{eq:crossing}, and the estimate from Step 2, we obtain
\begin{align*}
    \E[D^p]
    &\leq R^p
       +\sum_{\substack{j\geq0\\12r_j\leq L}}
         (2r_j)^p\mathbb P\{r_j\leq D<2r_j\}
       +L^p\mathbb P\{D\geq L/12\}\\
    &\leq R^p
       +C_{p,M}\sum_{j\geq0}(2^jR)^{p-M}
       +C_M L^{p-M}\\
    &\leq R^p+C_{p,M}R^{p-M}+C_M L^{p-M}\\
    &\leq C_pR^p,
\end{align*}
where the last inequality uses $R,L\geq1$ and $M>p$.
Taking $p$-th roots and then the suprema over $L$ and $w$ proves
\eqref{eq:segment-moments}.
\end{proof}

\begin{lemma}
\label{lem:stress-L2}
For every fixed $a>0$, we have
\begin{align}\label{eq:stress-L2}
    \E\left[
        \norm{q_{a/L}*\Sigma}_{L_x^2}^2
    \right]
    \leq C_aL^{-4}.
\end{align}
\end{lemma}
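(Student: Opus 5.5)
Since $\Sigma=-S_1+\mathcal K S_0$ and $\mathcal K$ is a Fourier multiplier with symbol of modulus at most one, commuting with $q_{a/L}$, Parseval gives
\[
\norm{q_{a/L}*\mathcal K S_0}_{L^2_x}\le\norm{q_{a/L}*S_0}_{L^2_x}.
\]
The nonlocality of $\mathcal K$ is therefore harmless at the $L^2$ level. It suffices to prove $\E\norm{q_{a/L}*S_i}_{L^2_x}^2\le C_aL^{-4}$ for $i=0,1$. Both $S_0$ and $S_1$ are nonnegative matrix measures bounded, in the order of positive semidefinite matrices, by the scalar measure
\[
\sigma(A):=\int_0^1\!\int \1_{\{\gamma_{x,y}(t)\in A\}}\abs{\df_x(y)}^2\dd\pi_n\dd t,
\]
since the weight $1-t$ lies in $[0,1]$. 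Hence $\abs{q_{a/L}*S_i}\lesssim q_{a/L}*\sigma$ pointwise, because $q_{a/L}\ge0$. By stationarity (translation invariance of the law) and Fubini, $\E\norm{q_{a/L}*\sigma}^2_{L^2_x}=\E[(q_{a/L}*\sigma)(0)^2]$. We reduce to bounding this second moment at a single point.

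I will pass to the dilated picture of \eqref{eq:dilate}. Under $x\mapsto Lx$, the plan becomes $\widehat\pi_n=n(S_L\times S_L)_\#\pi_n$, displacements scale by $L$, and $q_{a/L}$ becomes $L^4q^{(L)}_a$. A short computation gives
\[
(q_{a/L}*\sigma)(0)=L^{-2}\,\widehat\sigma_a(0),\qquad \widehat\sigma_a(0):=\int_0^1\!\int q_a^{(L)}(\widehat\gamma_{x,y}(t))\,\ell(x,y)^2\dd\widehat\pi_n\dd t ,
\]
where $q_a^{(L)}$ is the heat kernel of width $a$ on $\T^4_L$. The powers are $L^4$ from the kernel, $L^{-4}$ from the normalisation $n^{-1}$ of the plan, and $L^{-2}$ from the squared displacement. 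The claim thus reduces to $\E[\widehat\sigma_a(0)^2]\le C_a$ uniformly in $L$.

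To prove this, I will decompose $\T^4_L$ into dyadic shells $B_{2^{m+1}a}(0)\setminus B_{2^ma}(0)$, with $m\ge0$, together with the ball $B_a(0)$. On the $m$-th shell, $q^{(L)}_a\lesssim a^{-4}e^{-c4^m}$, using periodisation and the Gaussian tail, at least up to scale $L$. The contribution of segments meeting $B_{R}(0)$, with $R=2^{m+1}a$, is bounded by $D_{0,R}^2$ times the $\widehat\pi_n$-mass of segments meeting $B_R(0)$, integrated over $t$. A segment meeting $B_R$ with length at most $D_{0,R}$ has its Lebesgue endpoint $y$ in $B_{R+D_{0,R}}$. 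Since the second marginal of $\widehat\pi_n$ is Lebesgue, this mass is at most $C(R+D_{0,R})^4$. Therefore the shell contributes at most $a^{-4}e^{-c4^m}D_{0,R}^2(R+D_{0,R})^4$. Lemma~\ref{lem:segment-length} gives $\norm{D_{0,R}}_{L^{12}(\Omega)}\le C(1+R)$, so by Minkowski each term has $L^2(\Omega)$ norm at most $C_a e^{-c4^m}(1+2^ma)^6$. These are summable in $m$, with the bound uniform in $L$. When the shells reach scale $L$, the periodised heat kernel is bounded by $C_aL^{-4}$ and the total mass is $L^4$, with lengths at most $D\le L$. Lemma~\ref{lem:segment-length} at $R\sim L$ controls this piece in the same way.

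The main obstacle is this localisation step: bounding the heat-kernel-weighted stress by segment lengths near the point, and making sure the mass counting is correct. I will count the mass via the uniform marginal, not the atomic one. If the cut-locus tie-break causes trouble, I will absorb it using the fact that minimizing segments have length at most $L$. The reduction through $\mathcal K$ and the use of stationarity are routine.
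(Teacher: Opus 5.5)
Your proposal is correct and is essentially the paper's proof: $L^2$-boundedness of the degree-zero multiplier $\mathcal K$, domination of $S_0,S_1$ by the scalar length-squared measure, stationarity, dilation to $\T^4_L$, Gaussian decay of $q_a^{(L)}$ on shells, the Lebesgue-marginal mass bound $D_{w,R}^2(R+D_{w,R})^4$, Lemma~\ref{lem:segment-length}, and Minkowski. The paper uses unit shells $B_{m+1}\setminus B_m$ rather than dyadic ones, which is immaterial. Your separate treatment of ``shells at scale $L$'' is unnecessary, and taken literally it would not close, since $C_aL^{-4}\cdot L^4\cdot D^2$ with $\norm{D}\sim L$ is of order $L^2$; what saves you is that for $L\ge a$ the periodised kernel obeys the Gaussian bound on all of $\T^4_L$, whose diameter is $L$, so the factor $e^{-c4^m}$ already beats $(1+2^ma)^6$ on every shell.
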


The proof of Lemma~\ref{lem:stress-L2} is given in
Appendix \ref{app:stress}. The proof first uses the $L^2$ boundedness of $\mathcal K$ to reduce
to the measures $S_0,S_1$, which are carried by the transport segments; their local masses are
controlled by Lemma \ref{lem:segment-length}. We work in the physical coordinates of \eqref{eq:dilate}: with $S_L(x)=Lx$ and
$n=L^4$, the dilated plan $\widehat\pi_n$ has marginals $\sum_i\delta_{LX_i}$ and Lebesgue measure on
$\T^4_L$, both of mass $n$, and unit intensity.

\begin{lemma}
\label{lem:stress-square-function}
Fix $\vartheta>0$ and let  $ s_\ell:=2^{-\ell}$, $ 0\leq\ell\leq J,$
where $J$ is the largest integer such that $s_J\geq2L^{-1}$. Then
\begin{align}\label{eq:stress-square-function}
    \E\sum_{\ell=0}^J
    s_\ell^2
    \norm{
        q_{\vartheta s_\ell}*(\mathfrak{j}+\nabla u)
    }_{L_x^2}^2
    \leq
    C_\vartheta L^{-4}.
\end{align}
\end{lemma}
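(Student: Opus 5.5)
The plan is to use the divergence identity \eqref{eq:stress-identity}, $\mathfrak{j}+\nabla u=\operatorname{div}\Sigma$, and work entirely on the Fourier side. Since convolution with the heat kernel commutes with $\operatorname{div}$, we have $q_{\vartheta s_\ell}*(\mathfrak{j}+\nabla u)=\operatorname{div}(q_{\vartheta s_\ell}*\Sigma)$. Hence
\begin{align*}
    s_\ell^2\norm{q_{\vartheta s_\ell}*(\mathfrak{j}+\nabla u)}_{L^2_x}^2
    \leq
    \sum_{k\neq0}
    4\pi^2 s_\ell^2\abs{k}^2 e^{-8\pi^2\vartheta^2 s_\ell^2\abs{k}^2}
    \abs{\widehat\Sigma(k)}^2 .
\end{align*}
Summing over $\ell$, we obtain the multiplier $m(k):=\sum_{\ell=0}^J s_\ell^2\abs{k}^2e^{-8\pi^2\vartheta^2s_\ell^2\abs{k}^2}$. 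The derivative supplies a factor $s_\ell\abs k$, which exactly compensates the loss of regularity, so no single scale is expected to dominate.

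The key step is to compare $m(k)$ with the heat multiplier at the terminal scale $a/L$ appearing in Lemma \ref{lem:stress-L2}. Since the $s_\ell=2^{-\ell}$ are dyadic and $s_\ell\geq s_J\geq 2L^{-1}$, the sum over $\ell$ of $t_\ell e^{-ct_\ell}$ with $t_\ell=s_\ell^2\abs k^2$ is a geometric-type series bounded by a constant $C_\vartheta$. For large $\abs k$, the series is dominated by its smallest-$s$ term, which gives $m(k)\lesssim C_\vartheta\, s_J^2\abs k^2 e^{-8\pi^2\vartheta^2s_J^2\abs k^2}$ when $s_J\abs k\geq1$. Choosing $a:=\vartheta$ (any $a\leq \sqrt2\,\vartheta$ would do), we use $s_J\geq 2/L$ and $t e^{-ct}\lesssim e^{-ct/2}$ to conclude
\begin{align*}
    m(k)\leq C_\vartheta\, e^{-8\pi^2a^2\abs{k}^2/L^2}=C_\vartheta\,\widehat q_{a/L}(k)^2 \qquad\text{for all } k\neq0 .
\end{align*}
For small $\abs k$ this is immediate, since there $m(k)\le C_\vartheta$ and $\widehat q_{a/L}(k)\gtrsim1$. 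For large $\abs k$, the Gaussian with the larger variance parameter $s_J\geq 2/L$ decays faster than the one at $a/L$.

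Given this bound, Parseval shows that the left side of \eqref{eq:stress-square-function} is at most $C_\vartheta\E\norm{q_{a/L}*\Sigma}_{L^2_x}^2$, and Lemma \ref{lem:stress-L2} bounds this by $C_\vartheta L^{-4}$. One point needs justification: $\Sigma$ is only a matrix-valued measure, and $\mathcal K$ is applied to it. On the Fourier side, however, all that is used is that $\widehat\Sigma(k)$ is defined and that $\sum_k\widehat q_{a/L}(k)^2\abs{\widehat\Sigma(k)}^2<\infty$, which is exactly the content of Lemma \ref{lem:stress-L2}. The matrix norm interacts with $\operatorname{div}$ only through $\abs{k\cdot\widehat\Sigma_{i\cdot}(k)}\leq\abs k\abs{\widehat\Sigma_{i\cdot}(k)}$, which is the inequality used in the first display.

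The main obstacle is the multiplier comparison. We must make sure that the accumulation over roughly $\log L$ dyadic scales does not produce a $\log L$ factor, which is exactly the loss that must be avoided in $d=4$. The $s_\ell^2\abs k^2$ weight is what makes the scale sum geometric on both sides of $\abs k\sim s_\ell^{-1}$: the terms with $s_\ell\abs k\ll1$ contribute $\sum s_\ell^2\abs k^2\lesssim1$, and the terms with $s_\ell\abs k\gg1$ are exponentially small. We would check this carefully, because it is the only place where the constant could depend on $L$.
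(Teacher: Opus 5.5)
Your proposal is correct and follows essentially the paper's argument. Both use the divergence identity, the bound $\abs{k\cdot\widehat\Sigma_{i\cdot}(k)}\le\abs{k}\,\abs{\widehat\Sigma_{i\cdot}(k)}$, Parseval, a uniform dyadic bound on $\sum_\ell s_\ell^2\abs{k}^2e^{-cs_\ell^2\abs{k}^2}$, and then Lemma~\ref{lem:stress-L2} with $a=\vartheta$. The only difference is that the paper avoids your small/large $\abs{k}$ case split by writing $e^{-8\pi^2\vartheta^2 s_\ell^2\abs{k}^2}=e^{-8\pi^2\vartheta^2(s_\ell^2-L^{-2})\abs{k}^2}\,e^{-8\pi^2\vartheta^2L^{-2}\abs{k}^2}$ and using $s_\ell^2-L^{-2}\ge 3s_\ell^2/4$ (from $s_\ell\ge 2L^{-1}$), which makes the uniform-in-$k$ comparison immediate.
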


\begin{proof}
By \eqref{eq:stress-identity},
\begin{align*}
    q_{\vartheta s_\ell}*(\mathfrak{j}+\nabla u)
    =
    q_{\vartheta s_\ell}*\operatorname{div}\Sigma.
\end{align*}
For every $k\neq0$, by Cauchy–Schwarz,
\begin{align*}
    \abs{
        \widehat{\operatorname{div}\Sigma}(k)
    }^2
    &=
    4\pi^2
    \sum_{i=1}^4
    \abs{
        \sum_{j=1}^4
        k_j\widehat\Sigma_{ij}(k)
    }^2\\
    &\leq
    4\pi^2\abs{k}^2
    \abs{\widehat\Sigma(k)}^2.
\end{align*}
Therefore, Parseval's identity gives
\begin{align*}
    &\sum_{\ell=0}^J
    s_\ell^2
    \norm{
        q_{\vartheta s_\ell}*(\mathfrak{j}+\nabla u)
    }_{L_x^2}^2\\
    &\quad\leq
    4\pi^2
    \sum_{k\neq0}
    \left(
        \sum_{\ell=0}^J
        s_\ell^2\abs{k}^2
        e^{-8\pi^2\vartheta^2
        (s_\ell^2-L^{-2})\abs{k}^2}
    \right)
    e^{-8\pi^2\vartheta^2L^{-2}\abs{k}^2}
    \abs{\widehat\Sigma(k)}^2.
\end{align*}
Since $s_\ell\geq2L^{-1}$, we have
\(
    s_\ell^2-L^{-2}
    \geq
    3s_\ell^2/4.
\)
Since the scales $s_\ell$ are dyadic,
\begin{align*}
    \sup_{k\neq0}
    \sum_{\ell=0}^J
    s_\ell^2\abs{k}^2
    e^{-8\pi^2\vartheta^2
    (s_\ell^2-L^{-2})\abs{k}^2}
    \leq C_\vartheta.
\end{align*}
Consequently,
\begin{align*}
    \sum_{\ell=0}^J
    s_\ell^2
    \norm{
        q_{\vartheta s_\ell}*(\mathfrak{j}+\nabla u)
    }_{L_x^2}^2
    \leq
    C_\vartheta
    \norm{q_{\vartheta/L}*\Sigma}_{L_x^2}^2.
\end{align*}
Taking expectations and applying Lemma \ref{lem:stress-L2} with
$a=\vartheta$ proves \eqref{eq:stress-square-function}.
\end{proof}

\subsection{Summing all frequencies and all orders at once (\texorpdfstring{$d=4$}{d=4})}
\label{ssec:agg4}

Throughout this subsection, $d=4$, $L=n^{1/4}$, and
$\Lambda=L^2=n^{1/2}$. We prove that the squared error in the Green
approximation, multiplied by $n^2$, is bounded uniformly in $n$.
The two summations in the proof use different orthogonality properties:
the square-function estimate controls the frequency scales together,
and Hoeffding orthogonality controls the orders together.

The pair-frequency cutoff corresponding to $\Lambda$ is
\begin{align}\label{eq:agg4-cutoffs}
    M:=\frac{L}{\sqrt{8}\,\pi},
\end{align}
because the pair mode $e_k(x-y)$ has product eigenvalue
$2\lambda_k=8\pi^2|k|^2$. Recall the disjoint bands
\[
    \mathcal B_j
    :=\{k\in\Z^4:2^j\leq|k|<2^{j+1},\ \lambda_k\leq\Lambda\},
    \qquad s_j:=2^{-j},\qquad j\geq0.
\]
There are $O(1+\log n)$ nonempty bands. Every such band satisfies
$2^j\leq L/(2\pi)$, so $s_j\geq2\pi/L>2/L$ and the
square-function estimate \eqref{eq:stress-square-function} applies.
Empty bands are omitted from all sums.

Fix $\vartheta>0$ and define
\begin{align}\label{eq:agg4-error-field}
    \mathcal E_j
    :=q_{\vartheta s_j}*(\mathfrak{j}+\nabla u)
    =q_{\vartheta s_j}*\operatorname{div}\Sigma.
\end{align}
Here $\mathfrak{j}$ inside the convolution denotes the displacement current, and
the second equality is \eqref{eq:stress-identity}. The estimate that
will control both the pair response and the higher-order common responses is
\[
    \sum_j s_j^2\norm{\mathcal E_j}_{L^2_{\omega,x}}^2
    \leq C_\vartheta L^{-4}
    =\frac{C_\vartheta}{n}.
\]
This is a bound on the entire sum over scales. On $\mathcal B_j$, the
inverse of the heat multiplier in \eqref{eq: HeatKern} has $L^2$
operator norm at most $e^{16\pi^2\vartheta^2}$, uniformly in $j$.
Constants below may depend on the fixed value of $\vartheta$.

\begin{proposition}[Aggregate response at the critical cutoff]
\label{prop:aggregate4}
There is a constant $C<\infty$, independent of $n$, such that
\begin{align}\label{eq:agg4-conclusions}
    n^2\norm{P_2W_{n,4}-U_{n,L}}_2^2&\leq C,
    &n^2\sum_{r=3}^n\norm{P_rW_{n,4}}_2^2&\leq C.
\end{align}
Consequently,
\begin{align}\label{eq:agg4-approximation}
    \norm{n(W_{n,4}-\E W_{n,4})-nU_{n,L}}_2^2
    \leq C=o(\log n).
\end{align}
\end{proposition}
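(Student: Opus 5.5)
We follow the architecture of Lemmas~\ref{lem:uniform-pairwise-response} and~\ref{lem:higher-orders}, with two changes. The heat kernel $q_{\vartheta s_j}$ replaces the mollifier $\eta_s$. The per-band estimates of Proposition~\ref{prop:GH} are replaced by the single aggregate square-function bound \eqref{eq:stress-square-function}. That bound lets us sum over bands by Cauchy--Schwarz in $j$ without losing a factor $\sqrt{\log n}$. We first split every Hoeffding component with the product projections $\Pi_{\le\Lambda},\Pi_{>\Lambda}$ at $\Lambda=L^2$. By Lemma~\ref{lem:high-frequency}, $n^2\norm{\Pi_{>\Lambda}(W_{n,4}-\E W_{n,4})}_2^2\lesssim n^2\Lambda^{-1}n^{-3/2}=O(1)$. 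For $U_{n,L}$, the kernel is supported on $|k|\le L$. Its pair modes above the cutoff $M$ of \eqref{eq:agg4-cutoffs} number $O(L^4)$ with weights $\lambda_k^{-2}\asymp L^{-4}$, so they contribute $n^2\cdot n^{-2}\sum_{M<|k|\le L}\lambda_k^{-2}=O(1)$. It therefore suffices to treat the low-frequency parts.

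\emph{Pair term.} By Lemma~\ref{lem:pairwise-duality}, we bound $\Cov(W_{n,4}-U_{n,L},S_f)$ uniformly over real even $f$ with $\norm f_2=1$ and $\supp\widehat f\subset\{|k|\le M\}$, and we aim for $O(1/n)$. We decompose $f=\sum_j f_j$ along the bands $\mathcal B_j$. As in Lemma~\ref{lem:one-band-response}, \eqref{eq:pair-response-current} gives $\Cov(W,S_{f_j})=2\E\int g_{f_j}\cdot d\mathfrak j$. We write $g_{f_j}=q_{\vartheta s_j}*\widetilde g_j$, which is possible with norm loss at most $e^{16\pi^2\vartheta^2}$. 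Then $\Cov(W,S_{f_j})=2\E\int\mathcal E_j\cdot\widetilde g_j-2\E\int q_{\vartheta s_j}*\nabla u\cdot\widetilde g_j$. The second term is exact: it equals $2\sum_k\widehat f_j(k)/\lambda_k$. This cancels $\Cov(U_{n,L},S_{f_j})$ up to $\frac2n\sum_k\widehat f_j(k)/\lambda_k$, and summing over $j$ this remainder is at most $n^{-1}(\sum_k\lambda_k^{-2})^{1/2}\lesssim n^{-1}\sqrt{\log n}$. This is an improvement over the interaction term in Section~\ref{ssec:pair23}: there is no $(\rho_s-1)\nabla u_s$ term, because $\mathfrak j+\nabla u$ is handled linearly. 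For the first term, $\norm{\widetilde g_j}_{L^2_{\omega,x}}\lesssim\sqrt n\,s_j\norm{f_j}_2$. Cauchy--Schwarz in $j$ then gives
\[
\Bigl|\sum_j\E\int\mathcal E_j\cdot\widetilde g_j\Bigr|\lesssim\sqrt n\Bigl(\sum_j s_j^2\norm{\mathcal E_j}^2_{L^2_{\omega,x}}\Bigr)^{1/2}\Bigl(\sum_j\norm{f_j}_2^2\Bigr)^{1/2}\lesssim\sqrt n\cdot n^{-1/2}.
\]
Dividing by the normalisation $n(n-1)$ in \eqref{eq:pairwise-duality} and multiplying by $n^2$ yields $O(1)$.

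\emph{Higher orders.} We use \eqref{eq:order-r-response} and \eqref{eq:order-r-duality} with $a\in\mathcal C_{r,\Lambda}$, $r\le R\asymp\Lambda$. We split $\mathcal R_r(a)$ into the common-field and excluded-label parts \eqref{eq:higher-order-response-split}. In the common-field part, the $\nabla u$ contribution vanishes by Hoeffding orthogonality, exactly as before; this survives the heat smoothing. The $\mathcal E_j$ contribution is bounded by the same band-wise Cauchy--Schwarz together with \eqref{eq:common-field-band-bound}, giving $|\cdot|^2\lesssim(r-1)!(n)_{r-1}\,n^{-1}$. After multiplying by $n^2/(r!(n)_r)$, this gives $\lesssim 1/r$ per order.

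\textbf{Main obstacle.} The sum of $1/r$ over orders costs $\log\Lambda$, and the excluded-label term is also too lossy. The crude bound with $\norm{\mathrm{Diag}_b g}_2^2\lesssim\Lambda^{1/2}\norm a_2^2$ and $\sum\E|d_i|^2\le n\E W\lesssim n^{1/2}$ gives $r^2 n^{-1}\Lambda^{1/2}\cdot n^{1/2}$ per order, which is far too large. To avoid summing order by order, I would instead treat $\sum_{r\ge3}\Pi_{\le\Lambda}P_rW$ as a single object. I would test it against one arbitrary $a=\sum_r a_r$, using that $\mathcal H_{r-1}$ orthogonality makes the smoothed common fields of different orders orthogonal, so that the square-function bound is used only once. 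For the diagonal term, I would rewrite $d_i$ through $\mathfrak j=-\nabla u+\operatorname{div}\Sigma$ evaluated at the atom. The $\nabla u$ part is a first-chaos object that is orthogonal to most diagonal contractions. The $\operatorname{div}\Sigma$ part gains from Lemma~\ref{lem:stress-L2} at the terminal scale $L^{-1}$, where the factor $\Lambda^{1/2}=L^2$ is exactly compensated by $L^{-4}$. This is the step I expect to require the most new work.
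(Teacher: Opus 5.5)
\textbf{Gap: the excluded-label term.} Your pair term, the high-frequency and shell estimates, and the common-field part are sound and match the paper. For the common field, testing against $\sum_r a_r$ and using that the fields $H_j^{(r)}\in\mathcal H_{r-1}$ are orthogonal across $r$ is the dual form of the paper's step: it replaces $\mathcal E_j$ by $P_{r-1}\mathcal E_j$ and applies Bessel's inequality over $r$. (In the pair step the target is $\sup_f\abs{\Cov(W_{n,4}-U_{n,L},S_f)}=O(1)$, not $O(1/n)$, as your own final computation shows.)

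The genuine gap is the excluded-label term, which you leave open, and the route you sketch for it does not work as stated:
\begin{itemize}
\item $d_i/n$ is the weight of the atom of $\mathfrak j$ at $X_i$, and neither $\nabla u$ nor $\operatorname{div}\Sigma$ can be evaluated there. $\nabla u$ contains the self-interaction $-n^{-1}\nabla G(\cdot-X_i)$, which is singular at $X_i$.
\item Lemma~\ref{lem:stress-L2} controls only the spatial $L^2$ norm of $\Sigma$ after heat smoothing at scale $a/L$, not its values at sample points.
\item $\sum_bQ_{b,i}$ depends on $i$, so the term is not a pairing of $\mathfrak j$ with a common field.
\item For $r=3$, conditionally on $X_i$, both $\nabla u(X_i)$ (after any regularization) and $Q_{b,i}(X_i)$ have order-one components in the remaining variables, so no orthogonality is available.
\end{itemize}
Your bookkeeping is also off. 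In $d=4$ the trace estimate is $\norm{\operatorname{Diag}_bg}_2^2\lesssim\sum_{0<\abs{k}\le L}\abs{k}^{-2}\lesssim L^2=\Lambda$, not $\Lambda^{1/2}$. What compensates it is $\E W_{n,4}\lesssim L^{-2}$, not the $L^{-4}$ of Lemma~\ref{lem:stress-L2}.

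\textbf{The missing idea.} This term needs no transport input beyond the mean cost. With the correct exponent, the crude Cauchy--Schwarz bound is already $O(1)$ for each order after multiplication by $n^2/(r!(n)_r)$. The only defect is the sum over $R\asymp L^2$ orders, and it is removed by Hoeffding orthogonality taken \emph{conditionally on $X_i$}. For fixed $X_i$, $\sum_bQ_{b,i}(X_i)$ is a sum of canonical kernels in $r-2$ of the remaining variables, so $d_i$ may be replaced by its conditional projection $P^{(-i)}_{r-2}d_i$. Using $\frac{n(r-1)^2(r-2)!(n-1)_{r-2}}{r!(n)_r}=\frac{r-1}{r(n-r+1)}\le\frac Cn$ and Bessel's inequality in the conditional decomposition, one gets
\[
    n^2\sum_{r=3}^R\frac{\sup_a\abs{\mathcal R_r^{\rm exc}(a)}^2}{r!(n)_r}
    \le\frac{CL^2}{n}\sum_{i=1}^n\sum_{r=3}^R\E\abs{P^{(-i)}_{r-2}d_i}^2
    \le\frac{CL^2}{n}\sum_{i=1}^n\E\abs{d_i}^2
    \le CL^2\E W_{n,4}\le C.
\]
Equivalently, your single-test-object device works here too, provided the orthogonality across orders is taken with $X_i$ held fixed. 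Without conditioning it fails, because evaluating a random field at $X_i$ does not preserve its Hoeffding order in all $n$ sample variables.
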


\begin{proof}
We prove the bounds for sufficiently large $n$; the finitely many
remaining values are absorbed into $C$.

We first specify the pairings used below. For random vector fields
$A,B\in L^2(\Omega\times\T^4)$, write
\[
    \langle A,B\rangle_x
    :=\int_{\T^4}A(x)\cdot B(x)\dd\mu(x),
    \qquad
    \norm{A}_{L^2_{\omega,x}}^2
    :=\E\int_{\T^4}|A(x)|^2\dd\mu(x).
\]
Thus $\E\langle A,B\rangle_x$ means that we first take the spatial
inner product at a fixed sample, and then average over
$X_1,\ldots,X_n$. For a smooth vector field $h$, the current acts by
\[
    \langle \mathfrak{j},h\rangle_x
    :=\frac1n\sum_{i=1}^n d_i\cdot h(X_i).
\]
We also use $\langle\nabla u,h\rangle_x$ for the distributional
action of $\nabla u$ on $h$. All such test fields below are
trigonometric polynomials. Since the heat kernel is even, moving its
convolution from a test field to a distribution preserves the pairing.

\medskip
\noindent\emph{Step 1: the second-order component.}
Fix a band with $\mathcal B_j\cap\{0<|k|\leq M\}\neq\varnothing$.
Let $f$ be real and even, with $\norm{f}_2=1$ and Fourier support in
this intersection. Recall
\[
    S_f:=\sum_{i\neq\ell}f(X_i-X_\ell),
    \qquad
    g_f(x):=\sum_{i=1}^n\nabla(-\Delta)^{-1}f(x-X_i).
\]
Define $g_f^{\mathrm h}$ by inverting the heat multiplier on the
Fourier support of $g_f$, so that
$g_f=q_{\vartheta s_j}*g_f^{\mathrm h}$. By \eqref{eq:gf_bound}
and the uniform inverse multiplier bound,
\begin{align}\label{eq:agg4-pair-field}
    \norm{g_f^{\mathrm h}}_{L^2_{\omega,x}}^2
    \leq Cns_j^2.
\end{align}
The response identity \eqref{eq:pair-response-current} and the
self-adjointness of heat convolution give
\begin{align*}
    \Cov(W_{n,4},S_f)
    &=2\E\langle \mathfrak{j},g_f\rangle_x\\
    &=2\E\langle \mathfrak{j}+\nabla u,g_f\rangle_x
      -2\E\langle\nabla u,g_f\rangle_x\\
    &=2\E\langle\mathcal E_j,g_f^{\mathrm h}\rangle_x
      -2\E\langle\nabla u,g_f\rangle_x.
\end{align*}
The Fourier calculation used in \eqref{eq:2_cov_error} gives
\(
    -2\E\langle\nabla u,g_f\rangle_x
    =2\langle G_M,f\rangle.
\)
On the other hand, the normalization
$U_{n,M}=n^{-2}\sum_{i\neq\ell}G_M(X_i-X_\ell)$ gives
\[
    \Cov(U_{n,M},S_f)
    =2\frac{n-1}{n}\langle G_M,f\rangle.
\]
Here $\langle G_M,f\rangle=\int_{\T^4}G_M(x)f(x)\dd\mu(x)$
is a deterministic scalar inner product. Subtracting the last two
identities explains the correction of size $n^{-1}$ in
\begin{align}\label{eq:agg4-pair-response}
    \Cov(W_{n,4}-U_{n,M},S_f)
    &=2\E\langle\mathcal E_j,g_f^{\mathrm h}\rangle_x
      +\frac2n\langle G_M,f\rangle.
\end{align}
In dimension four, each dyadic band satisfies
\[
    |\langle G_M,f\rangle|^2
    \leq\sum_{\substack{k\in\mathcal B_j\\|k|\leq M}}
                 \lambda_k^{-2}
    \leq C.
\]
Indeed, the number of lattice points in the band is at most $C2^{4j}$,
whereas $\lambda_k^{-2}\leq C2^{-4j}$ there. Cauchy--Schwarz and
\eqref{eq:agg4-pair-field} therefore yield, uniformly over these $f$,
\begin{align}\label{eq:agg4-pair-bound}
    |\Cov(W_{n,4}-U_{n,M},S_f)|
    \leq C\sqrt n\,s_j\norm{\mathcal E_j}_{L^2_{\omega,x}}
           +\frac Cn.
\end{align}

Let $D:=\Pi_{\leq\Lambda}P_2W_{n,4}-U_{n,M}$, and let $D_j$
be its pair-frequency projection onto $\mathcal B_j$. Pairwise duality
\eqref{eq:pairwise-duality} gives
\[
    n^2\norm{D_j}_2^2
    =\frac{n}{2(n-1)}
      \sup_f|\Cov(W_{n,4}-U_{n,M},S_f)|^2,
\]
where the supremum is over the unit tests in the fixed pair band.
The right-hand side of \eqref{eq:agg4-pair-bound} is independent of
$f$. Squaring that bound with $(a+b)^2\leq2a^2+2b^2$ therefore gives
\[
    \sup_f|\Cov(W_{n,4}-U_{n,M},S_f)|^2
    \leq Cns_j^2\norm{\mathcal E_j}_{L^2_{\omega,x}}^2
           +\frac C{n^2}.
\]
Since $n/[2(n-1)]\leq1$ for $n\geq2$, the same upper bound applies
to $n^2\norm{D_j}_2^2$.

Let $N$ be the number of nonempty pair bands. Their disjointness gives
$\norm{D}_2^2=\sum_j\norm{D_j}_2^2$, and
$N\leq C(1+\log n)$. Summing the preceding inequality and then using
\eqref{eq:stress-square-function}, we obtain
\begin{align}\label{eq:agg4-pair-low}
    n^2\norm{D}_2^2
    &=\sum_j n^2\norm{D_j}_2^2\notag\\
    &\leq Cn\sum_j s_j^2\norm{\mathcal E_j}_{L^2_{\omega,x}}^2
              +\frac{CN}{n^2}\notag\\
    &\leq CnL^{-4}+C\frac{1+\log n}{n^2}
    \leq C.
\end{align}
The factor $1+\log n$ arises only from summing the remainder
$C/n^2$ over the bands. The main term is bounded by applying the
square-function estimate to the whole weighted sum. Replacing that
estimate by a separate bound $C/n$ for each summand would introduce
an additional logarithmic factor.

To replace the Green cutoff $M$ by $L$, observe that
\begin{align}\label{eq:agg4-cutoff-shell}
    n^2\norm{U_{n,L}-U_{n,M}}_2^2
    =2\frac{n-1}{n}\sum_{M<|k|\leq L}\lambda_k^{-2}
    \leq C.
\end{align}
Indeed, $L/M=\sqrt8\,\pi$ is fixed, so this shell meets only a
bounded number of dyadic annuli, each contributing at most a constant.

\medskip
\noindent\emph{Step 2: the common responses of orders $r\geq3$.}
Use the admissible kernels $\mathcal C_{r,\Lambda}$ and the response
splitting in \eqref{eq:higher-order-response-split}. At the present
cutoff, the largest possible order is
\[
    R:=\left\lfloor\frac{\Lambda}{4\pi^2}\right\rfloor
    =O(L^2)=o(n).
\]
Fix $3\leq r\leq R$ and $a\in\mathcal C_{r,\Lambda}$. Recall
that $a$ is symmetric, canonical, and has $\norm{a}_2=1$.
Project its first Fourier variable onto $\mathcal B_j$, obtaining
$a_j$, and set
\[
    g_j:=\nabla_1(-\Delta_1)^{-1}a_j,
    \qquad
    (G^\sharp)_j(x)
    :=\sum_{\substack{i_2,\ldots,i_r\in[n]\\
                      \text{pairwise distinct}}}
          g_j(x,X_{i_2},\ldots,X_{i_r}).
\]
The bands cover the first-variable Fourier support of $a$, and hence
\[
    G^\sharp=\sum_j(G^\sharp)_j,
    \qquad \sum_j\norm{a_j}_2^2=1.
\]
For this fixed $r$ and $a$, abbreviate the heat-deconvolved field by
\[
    H_j:=(G^\sharp)^{\mathrm h}_j,
    \qquad (G^\sharp)_j=q_{\vartheta s_j}*H_j.
\]
The kernel $a_j$ is still symmetric in its last $r-1$ variables.
Thus the second-moment calculation in
\eqref{eq:common-field-moment}--\eqref{eq:common-field-band-bound},
together with the inverse heat multiplier bound, gives
\begin{align}\label{eq:agg4-common-moment}
    \norm{H_j}_{L^2_{\omega,x}}^2
    \leq Cs_j^2(r-1)!(n)_{r-1}\norm{a_j}_2^2.
\end{align}

We next explain which Hoeffding projection contributes to the
response. All Hoeffding projections in this step act on
$X_1,\ldots,X_n$, with $x$ held fixed, and act componentwise on vector
fields. The field $H_j$ has the form
\[
    H_j(x)
    =\sum_{\substack{i_2,\ldots,i_r\in[n]\\
                     \text{pairwise distinct}}}
       h_j(x,X_{i_2},\ldots,X_{i_r}),
\]
where $h_j$ is obtained from $a_j$ by applying
$\nabla_1(-\Delta_1)^{-1}$ and the inverse heat multiplier in the
first variable only. These operations preserve canonicity in each of
the last $r-1$ variables: integrating $h_j$ with respect to any one
of those variables gives zero. Each summand therefore belongs to the
Hoeffding subspace indexed by its $r-1$ sample labels. Consequently,
\[
    P_{r-1}H_j(x)=H_j(x).
\]

For comparison, define the smoothed Poisson field
$V_j:=q_{\vartheta s_j}*\nabla u$. Since
$\Delta u=\mu_n-\mu$, we have
\[
    V_j(x)=\frac1n\sum_{\ell=1}^n K_j(x-X_\ell),
    \qquad
    K_j:=\nabla\Delta^{-1}(q_{\vartheta s_j}-1),
    \qquad \int_{\T^4}K_j\dd\mu=0.
\]
For each fixed $x$, this is a sum of centered functions of individual
sample variables. Thus $P_1V_j(x)=V_j(x)$. Because $r-1\geq2$,
Hoeffding orthogonality gives
\[
    \E[V_j(x)\cdot H_j(x)]=0,
    \qquad
    \E\langle V_j,H_j\rangle_x=0.
\]
For a direct verification, each product of a summand of $V_j$ and a
summand of $H_j$ contains a sample label among $i_2,\ldots,i_r$
different from $\ell$. Integrating over that sample variable
annihilates the product by canonicity.

We can now rewrite the first term of
\eqref{eq:higher-order-response-split} explicitly:
\begin{align*}
    \mathcal R_r^{\rm com}(a)
    &=\frac2n\E\sum_{i=1}^n d_i\cdot G^\sharp(X_i)\\
    &=2\sum_j\E\langle \mathfrak{j},(G^\sharp)_j\rangle_x\\
    &=2\sum_j\E\langle q_{\vartheta s_j}*\mathfrak{j},H_j\rangle_x\\
    &=2\sum_j\E\langle\mathcal E_j-V_j,H_j\rangle_x\\
    &=2\sum_j\E\langle\mathcal E_j,H_j\rangle_x\\
    &=2\sum_j\E\langle P_{r-1}\mathcal E_j,H_j\rangle_x.
\end{align*}
The third equality moves the heat convolution to the current. The
fifth uses the vanishing expectation just proved. The last uses
$P_{r-1}H_j=H_j$ and self-adjointness of the Hoeffding projection.

Cauchy--Schwarz first in $(\omega,x)$ and then over $j$ gives
\begin{align*}
    |\mathcal R_r^{\rm com}(a)|
    &\leq2\sum_j
       \norm{P_{r-1}\mathcal E_j}_{L^2_{\omega,x}}
       \norm{H_j}_{L^2_{\omega,x}}\\
    &\leq C\sqrt{(r-1)!(n)_{r-1}}
       \sum_j s_j\norm{P_{r-1}\mathcal E_j}_{L^2_{\omega,x}}
                    \norm{a_j}_2\\
    &\leq C\sqrt{(r-1)!(n)_{r-1}}
       \left(\sum_j s_j^2
           \norm{P_{r-1}\mathcal E_j}_{L^2_{\omega,x}}^2\right)^{1/2}
       \left(\sum_j\norm{a_j}_2^2\right)^{1/2}.
\end{align*}
The last factor is one. Squaring and taking the supremum over $a$
therefore yields
\begin{align}\label{eq:agg4-common-dual}
    \sup_{a\in\mathcal C_{r,\Lambda}}
        |\mathcal R_r^{\rm com}(a)|^2
    \leq C(r-1)!(n)_{r-1}
         \sum_j s_j^2\norm{P_{r-1}\mathcal E_j}_{L^2_{\omega,x}}^2.
\end{align}
In particular, this summation introduces no factor equal to the
number of frequency bands.

It remains to sum over $r$. Since $R=o(n)$,
\[
    \frac{n^2(r-1)!(n)_{r-1}}{r!(n)_r}
    =\frac{n^2}{r(n-r+1)}\leq Cn,
    \qquad 3\leq r\leq R.
\]
For every fixed $j$, the different Hoeffding projections are
orthogonal, so Bessel's inequality gives
\[
    \sum_{r=3}^R
       \norm{P_{r-1}\mathcal E_j}_{L^2_{\omega,x}}^2
    \leq\norm{\mathcal E_j}_{L^2_{\omega,x}}^2.
\]
Combining these facts with \eqref{eq:stress-square-function}, we obtain
\begin{align}\label{eq:agg4-common-sum}
    n^2\sum_{r=3}^R\frac{1}{r!(n)_r}
         \sup_{a\in\mathcal C_{r,\Lambda}}
             |\mathcal R_r^{\rm com}(a)|^2
    &\leq Cn\sum_j s_j^2\sum_{r=3}^R
                \norm{P_{r-1}\mathcal E_j}_{L^2_{\omega,x}}^2
         \notag\\
    &\leq Cn\sum_j s_j^2
                \norm{\mathcal E_j}_{L^2_{\omega,x}}^2
    \leq CnL^{-4}=C.
\end{align}
Thus the sum over orders introduces no factor $R$.

\medskip
\noindent\emph{Step 3: the excluded-label responses.}
We now control the second term of
\eqref{eq:higher-order-response-split}. The Fourier trace argument
proving \eqref{eq:diagonal-trace}, applied in dimension four, gives
\begin{align}\label{eq:agg4-trace}
    \norm{\operatorname{Diag}_b g}_2^2
    \leq C\sum_{0<|k_1|\leq L}\frac{1}{|k_1|^2}
    \leq CL^2,
    \qquad a\in\mathcal C_{r,\Lambda}.
\end{align}
For the fields $Q_{b,i}$ defined in that argument, the conditional
second-moment calculation following \eqref{eq:diagonal-trace} yields
\begin{align}\label{eq:agg4-exclusion-moment}
    \sum_{i=1}^n\E\left|\sum_{b=2}^rQ_{b,i}(X_i)\right|^2
    \leq C(r-1)^2n(r-2)!(n-1)_{r-2}L^2.
\end{align}
For each $i$, let $P_m^{(-i)}$ denote the order-$m$ Hoeffding
projection in the remaining variables $(X_\ell)_{\ell\neq i}$,
with $X_i$ held fixed. Conditional on $X_i$, the field
$Q_{b,i}(X_i)$ is a sum of canonical kernels in $r-2$ distinct
remaining sample variables. It therefore belongs entirely to order
$r-2$ in those variables. Conditional orthogonality allows us to
write the excluded-label response as
\[
    \mathcal R_r^{\rm exc}(a)
    =-\frac2n\sum_{i=1}^n
       \E\left[P_{r-2}^{(-i)}d_i\cdot
                       \sum_{b=2}^rQ_{b,i}(X_i)\right].
\]
Cauchy--Schwarz and \eqref{eq:agg4-exclusion-moment} imply
\begin{align*}
    \sup_{a\in\mathcal C_{r,\Lambda}}
       |\mathcal R_r^{\rm exc}(a)|^2
    &\leq \frac{CL^2(r-1)^2(r-2)!(n-1)_{r-2}}{n}
       \sum_{i=1}^n\E|P_{r-2}^{(-i)}d_i|^2.
\end{align*}
The factorial ratio after multiplication by $n^2/[r!(n)_r]$ is
\[
    \frac{n(r-1)^2(r-2)!(n-1)_{r-2}}{r!(n)_r}
    =\frac{r-1}{r(n-r+1)}\leq\frac Cn.
\]
Summing over $r$ and applying Bessel's inequality with $X_i$ held
fixed therefore gives
\begin{align}\label{eq:agg4-exclusion-sum}
    n^2\sum_{r=3}^R\frac{1}{r!(n)_r}
         \sup_{a\in\mathcal C_{r,\Lambda}}
             |\mathcal R_r^{\rm exc}(a)|^2
    &\leq \frac{CL^2}{n}\sum_{i=1}^n\sum_{r=3}^R
                    \E|P_{r-2}^{(-i)}d_i|^2\notag\\
    &\leq \frac{CL^2}{n}\sum_{i=1}^n\E|d_i|^2\notag\\
    &\leq CL^2\E W_{n,4}\leq C.
\end{align}
The third inequality uses \eqref{eq:disp_bound}, and the last uses
$\E W_{n,4}=O(L^{-2})$. Holding $X_i$ fixed is essential here:
evaluating a random field at $X_i$ does not in general preserve its
Hoeffding order in all $n$ sample variables.

Since $\mathcal R_r=\mathcal R_r^{\rm com}+\mathcal R_r^{\rm exc}$,
we have
\[
    \sup_a|\mathcal R_r(a)|^2
    \leq2\sup_a|\mathcal R_r^{\rm com}(a)|^2
        +2\sup_a|\mathcal R_r^{\rm exc}(a)|^2.
\]
Applying \eqref{eq:higher-order-low-response},
\eqref{eq:agg4-common-sum}, and \eqref{eq:agg4-exclusion-sum} gives
\begin{align}\label{eq:agg4-higher-low}
    n^2\sum_{r=3}^n
       \norm{\Pi_{\leq\Lambda}P_rW_{n,4}}_2^2\leq C.
\end{align}

\medskip
\noindent\emph{Step 4: the remaining product frequencies.}
Lemma~\ref{lem:high-frequency}, at the same critical cutoff, gives
\begin{align}\label{eq:agg4-high}
    n^2\norm{\Pi_{>\Lambda}(W_{n,4}-\E W_{n,4})}_2^2
    \leq\frac{4n\E W_{n,4}}{\Lambda}\leq C.
\end{align}
The last inequality follows from $n=L^4$, $\Lambda=L^2$, and
$\E W_{n,4}=O(L^{-2})$. Product spectral projections commute with
Hoeffding projections, and the Hoeffding orders are orthogonal.
Consequently, \eqref{eq:agg4-high} bounds both
$n^2\norm{\Pi_{>\Lambda}P_2W_{n,4}}_2^2$ and
$n^2\sum_{r=3}^n\norm{\Pi_{>\Lambda}P_rW_{n,4}}_2^2$.

For the second-order component, use the decomposition
\[
    P_2W_{n,4}-U_{n,L}
    =D+\Pi_{>\Lambda}P_2W_{n,4}+(U_{n,M}-U_{n,L}).
\]
The bounds \eqref{eq:agg4-pair-low}, \eqref{eq:agg4-cutoff-shell},
and \eqref{eq:agg4-high}, together with the squared triangle
inequality, prove the first assertion of \eqref{eq:agg4-conclusions}.
The second follows by adding \eqref{eq:agg4-higher-low} to the
high-frequency bound for orders $r\geq3$.

Finally, $P_1W_{n,4}=0$ by \eqref{eq:P1=0}, and $U_{n,L}$ belongs
to the second Hoeffding subspace. Hence orthogonality gives
\begin{align*}
    &\norm{n(W_{n,4}-\E W_{n,4})-nU_{n,L}}_2^2\\
    &\qquad=n^2\norm{P_2W_{n,4}-U_{n,L}}_2^2
             +n^2\sum_{r=3}^n\norm{P_rW_{n,4}}_2^2
    \leq C,
\end{align*}
which proves \eqref{eq:agg4-approximation}.
\end{proof}

\begin{remark}
The logarithm in the leading variance comes from the Green statistic:
\[
    n^2\Var(U_{n,L})
    =2\frac{n-1}{n}\sum_{0<|k|\leq L}\lambda_k^{-2}
    \asymp\log L\asymp\log n.
\]
The proposition bounds the squared approximation error on this scale
by $O(1)=o(\log n)$. In Step~1, the only logarithmic factor in the
error bound is the contribution $O((1+\log n)/n^2)$ of the remainder
summed over the frequency bands. The square-function estimate controls
the main error over all scales, while Bessel's inequality controls
the higher-order responses over all Hoeffding orders.
\end{remark}

%\subsection{The two approximation theorems}
%\label{ssec:approx}

%------------------------------------------------
%                 Section CLTs
%------------------------------------------------
\section{Central limit theorems}
\label{sec:clt}

In this Section, we prove the limit laws of the two leading order statistics, which are the last ingredients required to prove our main theorems.

\subsection{Degenerate $U$-statistics with a fixed square-integrable kernel ($d=2,3$)}
\label{ssec:clt23}

\begin{proposition}\label{prop:green-fixed-limit}
For $d\in\{2,3\}$, with the normalization in \eqref{eq:Ustat},
\begin{align}\label{eq:green-fixed-convergence}
    nU_n\weak L_d,
\end{align}
where $L_d$ is the series in \eqref{eq:limitlaw}, convergent in $L^2$ and
almost surely. Moreover, for every $n\geq1$,
\begin{align}\label{eq:green-fixed-variance}
    \E U_n&=0,
    &\Var(nU_n)
    &=2\frac{n-1}{n}\sum_{k\ne0}\lambda_k^{-2}.
\end{align}
In particular,
\begin{align}\label{eq:green-fixed-variance-limit}
    n^2\Var(U_n)\longrightarrow\Var(L_d)
    =2\sum_{k\ne0}\lambda_k^{-2},
\end{align}
and, when $d=2$, this limit equals $\beta(2)/(12\pi^2)$.
\end{proposition}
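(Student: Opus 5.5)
The kernel $G$ is real, even and canonical, and for $d\le 3$ it lies in $L^2(\mu)$ by \eqref{eq:degenerate}. The natural route is therefore the spectral (Fourier) diagonalization of the degenerate $U$-statistic, with a finite-dimensional truncation and an $L^2$ tail bound.

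\emph{Exact moments.} Every term $G(X_i-X_j)$ with $i\ne j$ has mean zero, so $\E U_n=0$. Canonicity makes terms indexed by distinct unordered pairs orthogonal. Hence
\[
\Var(nU_n)=\frac{4}{n^2}\binom n2\norm{G}_2^2=2\frac{n-1}{n}\sum_{k\ne0}\lambda_k^{-2}
\]
by Parseval, and \eqref{eq:green-fixed-variance} and \eqref{eq:green-fixed-variance-limit} follow immediately once we know $\Var(L_d)$. For the latter, each $\zeta_{[k]}-1$ has variance $1$, and each class $[k]$ contains two lattice points. This gives $\sum_{[k]}4\lambda_{[k]}^{-2}=2\sum_{k\ne0}\lambda_k^{-2}$, and the same summability yields $L^2$ and a.s.\ convergence by Kolmogorov's two-series theorem. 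For $d=2$, I would use the classical identity $\sum_{k\in\Z^2\setminus\{0\}}\abs{k}^{-4}=4\zeta(2)\beta(2)$, which comes from $r_2(m)=4\sum_{d\mid m}\chi_{-4}(d)$. Then $2/(16\pi^4)\cdot 4\cdot\frac{\pi^2}{6}\beta(2)=\beta(2)/(12\pi^2)$.

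\emph{Distributional limit.} Using the notation $S_k$ from \eqref{eq: SkDef}, we have the exact identity
\[
nU_n=\frac1n\sum_{k\ne0}\frac{\abs{S_k}^2-n}{\lambda_k}=\sum_{[k]}\frac{2}{\lambda_{[k]}}\Bigl(\frac{\abs{S_k}^2}{n}-1\Bigr).
\]
Fix $K$ and split $nU_n=nU_{n,K}+n(U_n-U_{n,K})$. The tail satisfies $\E[n^2(U_n-U_{n,K})^2]\le 2\sum_{\abs k>K}\lambda_k^{-2}$ uniformly in $n$, and this tends to $0$ as $K\to\infty$; the same holds for the corresponding tail of $L_d$. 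For fixed $K$, the finitely many vectors $(\operatorname{Re}S_k,\operatorname{Im}S_k)/\sqrt n$ over representatives $[k]$ with $\abs k\le K$ are normalized sums of i.i.d.\ bounded centered vectors. Their covariance is $\tfrac12 I$, since $\E e_k\overline{e_{k'}}=\delta_{kk'}$ and $\E e_ke_{k'}=0$ for $k'\ne -k$. The multivariate CLT and the continuous mapping theorem then give $\abs{S_k}^2/n\weak$ independent $\mathrm{Exp}(1)$ variables, so $nU_{n,K}$ converges weakly to the truncated $L_d$. A standard approximation argument (for example, Billingsley's Theorem 3.2 on bounded Lipschitz test functions) combines the uniform tail bound with the finite-$K$ limit to give \eqref{eq:green-fixed-convergence}.

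The main obstacle is only bookkeeping in the approximation step. One must check that the tail bound is uniform in $n$ (it is, because the variance formula is exact) and that the exponential identification uses the pairing $k\sim-k$ correctly. No step is deep.
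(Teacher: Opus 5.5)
Your proposal is correct and follows essentially the same route as the paper. Both use the exact variance from orthogonality of canonical pair terms and Parseval, the finite expansion $nU_{n,K}=\sum_{[k],\,\abs{k}\le K}\frac{2}{\lambda_{[k]}}\bigl(\abs{S_k}^2/n-1\bigr)$ with the multivariate CLT (covariance $\tfrac12 I$) and continuous mapping, a uniform-in-$n$ $L^2$ tail bound to pass $K\to\infty$, and Jacobi's two-square identity for the $d=2$ constant. One small caveat: your untruncated identity for $nU_n$ holds only as the $L^2$ limit of the truncations, since the Fourier series of $G$ is not absolutely convergent; your argument never uses it beyond that, so nothing breaks.
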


\begin{proof}
By \eqref{eq:degenerate}, $G\in L^2(\mu)$. Since $X_i-X_j$ has law
$\mu$ whenever $i\ne j$, the statistic $U_n$ is well defined in $L^2$.
%and does not depend on the choice of an almost-everywhere representative of $G$. 
Canonicity gives $\E U_n=0$. For $n\geq2$, the first equality
in \eqref{eq:pairwise-duality}, applied to $nU_n$ with pair kernel
$2G/n$, gives \eqref{eq:green-fixed-variance} by
\eqref{eq:degenerate}; the case $n=1$ is immediate. Applying the same
identity with pair kernel $2(G-G_K)/n$ and using Parseval's identity
also gives
\begin{align}\label{eq:green-fixed-tail}
    \E\abs{nU_n-nU_{n,K}}^2
    =2\frac{n-1}{n}\sum_{\abs{k}>K}\lambda_k^{-2}
    \leq2\sum_{\abs{k}>K}\lambda_k^{-2}
    \longrightarrow0
\end{align}
as $K\to\infty$, uniformly in $n$.

We next prove convergence for a fixed $K<\infty$, using the empirical
Fourier sums $S_k$ from Section~\ref{ssec:pair23} and defined in \eqref{eq: SkDef}. Choose one
representative $k$ from each class $[k]\in\mathcal P_d$ with
$\abs{k}\leq K$. Since $S_{-k}=\overline{S_k}$ by the property of the complex exponential, the identity
\begin{align*}
    \sum_{i\ne j}e_k(X_i-X_j)=\abs{S_k}^2-n
\end{align*}
yields the finite Fourier expansion
\begin{align}\label{eq:green-fixed-fourier}
    nU_{n,K}
    =\sum_{\substack{[k]\in\mathcal P_d\\\abs{k}\leq K}}
        \frac{2}{\lambda_{[k]}}
        \left(\frac{\abs{S_k}^2}{n}-1\right).
\end{align}

Orthogonality of torus characters gives, for these representatives,
\begin{align*}
    \E e_k(X_1)&=0,
    &\E\bigl[e_k(X_1)\overline{e_\ell(X_1)}\bigr]
        &=\mathbf 1_{\{k=\ell\}},
    &\E\bigl[e_k(X_1)e_\ell(X_1)\bigr]&=0.
\end{align*}
Consequently, the real and imaginary parts of the vector
$(e_k(X_1))_{[k]\in\mathcal P_d,\,\abs{k}\leq K}$ have covariance matrix
$\tfrac12 I$. The multivariate central limit theorem thus implies
\begin{align*}
    \left(\frac{S_k}{\sqrt n}\right)_{[k]\in\mathcal P_d,\,\abs{k}\leq K}
    \weak
    (Z_{[k]})_{[k]\in\mathcal P_d,\,\abs{k}\leq K},
\end{align*}
where all the real and imaginary parts of the $Z_{[k]}$ are independent
$\mathcal N(0,1/2)$ variables. In particular, the variables
$\abs{Z_{[k]}}^2$ are independent and have the $\mathrm{Exp}(1)$ law.
By the continuous mapping theorem and
\eqref{eq:green-fixed-fourier},
\begin{align}\label{eq:green-fixed-truncated-limit}
    nU_{n,K}\weak L_{d,K}
    :=\sum_{\substack{[k]\in\mathcal P_d\\\abs{k}\leq K}}
        \frac{2}{\lambda_{[k]}}(\zeta_{[k]}-1).
\end{align}

The summands in \eqref{eq:limitlaw} are independent and centered, and
\begin{align*}
    \sum_{[k]\in\mathcal P_d}
        \Var\left(\frac{2}{\lambda_{[k]}}(\zeta_{[k]}-1)\right)
    =4\sum_{[k]\in\mathcal P_d}\lambda_{[k]}^{-2}
    =2\sum_{k\ne0}\lambda_k^{-2}<\infty.
\end{align*}
The series therefore converges in $L^2$ and, by the convergence theorem
for independent centered square-integrable series, almost surely
(for example, when the classes are enumerated in nondecreasing order
of $\abs{k}$). Its tails satisfy
\begin{align}\label{eq:green-limit-tail}
    \E\abs{L_d-L_{d,K}}^2
    =2\sum_{\abs{k}>K}\lambda_k^{-2}\longrightarrow0.
\end{align}
Combining \eqref{eq:green-fixed-tail},
\eqref{eq:green-fixed-truncated-limit}, and
\eqref{eq:green-limit-tail} proves
\eqref{eq:green-fixed-convergence} by taking the limit in $n$ first and then $K$.
The same variance computation gives
\eqref{eq:green-fixed-variance-limit} directly, without requiring
moment convergence to follow from weak convergence.

Finally, in dimension two, Jacobi's two-square identity, as recorded
in Appendix~\ref{app:lattice}, gives
\begin{align*}
    \sum_{(a,b)\in\Z^2\setminus\{(0,0)\}}
        \frac{1}{(a^2+b^2)^2}
    =4\zeta(2)\beta(2).
\end{align*}
Since $\lambda_k=4\pi^2\abs{k}^2$ and $\zeta(2)=\pi^2/6$,
\begin{align*}
    2\sum_{k\in\Z^2\setminus\{0\}}\lambda_k^{-2}
    =\frac{4\zeta(2)\beta(2)}{8\pi^4}
    =\frac{\beta(2)}{12\pi^2}.
\end{align*}
\end{proof}

\begin{remark}\label{rem:green-fixed-dimension}
The only dimension-dependent input in this argument is the
square-integrability of $G$, equivalently
$\sum_{k\ne0}\lambda_k^{-2}<\infty$. This condition holds for $d\leq3$
and fails at $d=4$, where the uniform Fourier-tail estimate
\eqref{eq:green-fixed-tail} is no longer available and a growing cutoff
is needed.
\end{remark}

\subsection{The critical kernel: fourth moment and de Jong's theorem (\texorpdfstring{$d=4$}{d=4})}
\label{ssec:clt4}

Canonicity and the variance identity
\eqref{eq:pairwise-duality}, applied with pair kernel $2G_K/n$, give
\begin{align}\label{eq:green-critical-variance-cutoff}
    \E U_{n,K}&=0,
    &\Var(nU_{n,K})
    &=2\frac{n-1}{n}\sum_{0<\abs{k}\leq K}\lambda_k^{-2}.
\end{align}
At the cutoff $K=L$, the lattice asymptotic
\eqref{eq:lattice-critical-four} therefore yields
\begin{align}\label{eq:green-critical-variance}
    \Var(nU_{n,L})=\frac{\log n}{16\pi^2}+O(1).
\end{align}
To identify the limiting law, we first estimate the fourth moment of
a canonical pair sum.

\begin{lemma}[Fourth moment of a pair sum]
\label{lem:pair-fourth-moment}
Let $g\in L^4(\T^4,\mu)$ be real, even, centered, and nonzero. Set
\[
    A:=\norm{g}_2^2,\qquad
    B:=\norm{g}_4^4,\qquad
    C_1:=\norm{g*g}_2^2.
\]
For the pair sum $S_g:= 2 \sum_{i<j} g(X_i-X_j)$  and
every $n\geq2$,
\begin{align}\label{eq:pair-fourth-moment}
    \abs{\frac{\E S_g^4}{(\E S_g^2)^2}-3}
    \lesssim
    \frac1{n^2}
    +\frac{B}{n^2A^2}
    +\frac{\sqrt{BC_1}}{nA^2}
    +\frac{C_1}{A^2},
\end{align}
with an absolute implicit constant.
\end{lemma}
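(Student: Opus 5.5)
Write $S_g=\sum_{i\neq j}g(X_i-X_j)=2\sum_{e}g_e$, where $e=\{i,j\}$ runs over unordered pairs and $g_e:=g(X_i-X_j)$. The plan is a direct expansion of the fourth moment over quadruples of edges, organized by the multigraph the edges form. Canonicity makes most terms vanish: $\int g(x-y)\dd\mu(y)=0$, so any edge with a vertex of degree one has zero expectation. Hence $\E S_g^2=4\binom n2A$, and
\[
\E S_g^4=16\sum_{e_1,e_2,e_3,e_4}\E[g_{e_1}g_{e_2}g_{e_3}g_{e_4}].
\]
A quadruple contributes only if every vertex it touches has degree at least two in the multigraph $e_1\cup\dots\cup e_4$.

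I will list the surviving configurations up to relabeling and evaluate each one by Fubini and translation invariance. Since $X_i-X_j$ is uniform and the pairwise differences along a forest are independent, these expectations reduce to integrals of $g$.

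First, two distinct edges, each doubled, on vertex-disjoint or vertex-sharing pairs. In the disjoint case the expectation is $A^2$. When the pairs share a vertex, we get $\E[g(X_1-X_2)^2g(X_1-X_3)^2]=A^2$ as well, because $X_1-X_2$ and $X_1-X_3$ are independent. The number of ordered quadruples of this type is $3\bigl(\binom n2^2-\binom n2\bigr)$. After multiplying by $16$ and dividing by $(\E S_g^2)^2=16\binom n2^2A^2$, this class gives $3-O(n^{-2})$.

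Second, a single edge taken four times. This gives $\binom n2 B$, so its contribution to the ratio is $O(B/(n^2A^2))$.

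Third, a $4$-cycle $i\!-\!j\!-\!k\!-\!l\!-\!i$. Integrating out the variables yields $\int (g*g)^2=C_1$, and there are $O(n^4)$ such quadruples. Dividing by $n^4A^2$ gives $O(C_1/A^2)$.

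Fourth, a triangle with one edge doubled, i.e.\ $g_{ij}^2g_{jk}g_{ki}$. Integrating over $X_k$ gives
\[
\int g(x)^2\,(g*g)(x)\dd\mu(x)\le \norm{g}_4^2\norm{g*g}_2=\sqrt{BC_1}
\]
by Cauchy--Schwarz, using evenness of $g$. There are $O(n^3)$ such quadruples, so this class contributes $O(\sqrt{BC_1}/(nA^2))$.

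Finally, I will check that nothing else survives. Four edges carry total degree $8$, and every touched vertex needs degree at least $2$, so at most $4$ vertices are involved. The remaining possibilities are a triangle plus a pendant edge, or a path, both of which have a degree-one vertex and therefore vanish. On three vertices, besides the two cases above, the only other option is a doubled edge plus two further edges sharing a vertex; that too leaves a vertex of degree one and vanishes. Every surviving term also needs $g\in L^4$ to be well defined; this follows from H\"older and the assumption $g\in L^4$ (note that $C_1\le A^2$).

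The main obstacle is this bookkeeping step: making sure the enumeration of multigraphs is exhaustive and that the combinatorial multiplicities of the leading class $\{e,e,f,f\}$ produce exactly $3$ up to an $O(n^{-2})$ error. All the other classes only require order-of-magnitude counts and Cauchy--Schwarz/H\"older bounds, so the absolute implicit constant follows.
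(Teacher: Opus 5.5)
Your proposal is correct and follows the paper's proof essentially step for step: expand over edge multigraphs, discard every configuration with a degree-one vertex by canonicity, and evaluate the four surviving classes as $B$, $A^2$ (including the shared-vertex case), $\int g^2(g*g)\dd\mu$ and $C_1$, with Cauchy--Schwarz giving $\abs{\int g^2(g*g)\dd\mu}\le\sqrt{BC_1}$. The only difference is that the paper records the exact expansion $\E(S_g/2)^4=NB+3N(N-1)A^2+36\binom n3D+72\binom n4C_1$ with $N=\binom n2$, whereas you track only the orders of the last two multiplicities, which is all the stated bound needs.
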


\begin{proof}
Write $N=\binom n2$ and
\[
    D:=\int_{\T^4}g(x)^2(g*g)(x)\dd\mu(x).
\]
Orthogonality of distinct unordered pairs gives
$\E(S_g/2)^2=NA$. Expand the fourth power of
$S_g/2$. Each product factor will be seen as graph edge where the vertices are the indices of the X's.  Each product gives rise to four
unordered edges, counted with multiplicity. 

If a vertex has degree one, conditioning on all other sample points shows that the corresponding product has expectation zero. Such products therefore contribute nothing to \(\E[(S_g/2)^4]\).
 The remaining possibilities are a single edge repeated four
times, two distinct edges each repeated twice, a triangle with one
edge repeated twice, and a cycle on four vertices.

Their expectations are, respectively, $B$, $A^2$, $D$, and $C_1$.
For the second case, the value is $A^2$ even when the two edges share
a vertex, since the conditional second moment of either edge is
$A$. In the last two cases, integrating over a vertex incident to
two different edges produces the convolution $g*g$. Counting the
orders of the four edges consequently gives
\begin{align}\label{eq:pair-fourth-moment-exact}
    \E(S_g/2)^4
    =NB+3N(N-1)A^2
        +36\binom n3D+72\binom n4C_1.
\end{align}
Indeed, the two doubled edges have $6\binom N2=3N(N-1)$
orderings. Each set of three vertices has three choices for the
doubled triangle edge and $4!/2!=12$ orderings. Each set of four
vertices has three cycles and $4!=24$ orderings.

Subtracting $3N^2A^2$ from \eqref{eq:pair-fourth-moment-exact} and
dividing by $N^2A^2$ gives
\[
    \frac{\E S_g^4}{(\E S_g^2)^2}-3
    =\frac{B-3A^2}{NA^2}
      +\frac{36\binom n3D+72\binom n4C_1}{N^2A^2}.
\]
Since $\abs{D}\leq\sqrt{BC_1}$ by Cauchy--Schwarz, the binomial
coefficients give \eqref{eq:pair-fourth-moment}.
\end{proof}

\begin{proposition}\label{prop:green-critical-limit}
In dimension four,
\begin{align}\label{eq:green-critical-convergence}
    \frac{nU_{n,L}}{\sqrt{\log n}}
    \weak\mathcal N\left(0,\frac{1}{16\pi^2}\right).
\end{align}
\end{proposition}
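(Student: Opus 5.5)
The plan is to apply de Jong's fourth-moment theorem for degenerate $U$-statistics to the pair sum $S_g$ with $g:=G_L$. We have $nU_{n,L}=S_{G_L}/n$, so the statistic has the form of Lemma~\ref{lem:pair-fourth-moment}. De Jong's theorem states the following for canonical (degenerate) $U$-statistics of order two, normalized to unit variance. Suppose the fourth moment converges to $3$ and the maximal influence $\max_i \sum_{j}\Var(g(X_i-X_j))/\Var(S_g)$ tends to zero. Then the statistic is asymptotically standard normal. The influence condition is immediate: each index carries a fraction $(n-1)/N\cdot\frac12=1/n$ of the variance. So everything reduces to showing that the right-hand side of \eqref{eq:pair-fourth-moment} tends to zero. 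The variance is then supplied by \eqref{eq:green-critical-variance}, $\Var(nU_{n,L})/\log n\to 1/(16\pi^2)$, and Slutsky's lemma concludes.

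The quantities $A,B,C_1$ for $g=G_L$ in dimension four are computed as follows.

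\textbf{The constant $A$.} By Parseval, $A=\sum_{0<|k|\le L}\lambda_k^{-2}\asymp \log L\asymp\log n$, using Appendix~\ref{app:lattice}.

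\textbf{The constant $C_1$.} The convolution $g*g$ has Fourier coefficients $\lambda_k^{-2}$ for $0<|k|\le L$. Hence $C_1=\sum_{0<|k|\le L}\lambda_k^{-4}\le\sum_{k\ne0}\lambda_k^{-4}<\infty$, since $|k|^{-8}$ is summable in $\Z^4$. Thus $C_1/A^2\lesssim(\log n)^{-2}\to0$.

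\textbf{The constant $B$.} This is the main obstacle, since $\norm{G_L}_4^4$ requires spatial information rather than Parseval alone. I would use $\norm{G_L}_\infty\le G_L(0)=\sum_{0<|k|\le L}\lambda_k^{-1}\lesssim L^2$, which holds because all the Fourier coefficients are nonnegative. Then $B\le\norm{G_L}_\infty^2A\lesssim L^4\log n=n\log n$.

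With these bounds, $B/(n^2A^2)\lesssim 1/(n\log n)\to0$. The mixed term satisfies $\sqrt{BC_1}/(nA^2)\lesssim \sqrt{n\log n}/(n\log^2 n)\to0$, and $1/n^2\to0$. If one wanted a sharper bound for $B$, the heuristic $G_L(x)\approx\min(|x|^{-2},L^2)/(4\pi^2)$ gives $B\lesssim\int\min(|x|^{-8},L^8)\,dx\asymp L^4$. The crude $L^\infty$ bound above already suffices, so no such pointwise kernel estimate is needed. Therefore $\E S_g^4/(\E S_g^2)^2\to3$, and de Jong's theorem yields $S_g/\sqrt{\E S_g^2}\weak\mathcal N(0,1)$. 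Combining this with \eqref{eq:green-critical-variance} gives \eqref{eq:green-critical-convergence}.

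One care point is to verify that de Jong's theorem applies in the form used: a sequence of degenerate quadratic forms with kernel depending on $n$, where $g$ is bounded (hence in $L^4$) for each fixed $n$. If instead one prefers a self-contained route, the martingale CLT is an alternative. There the conditional variance and Lindeberg terms reduce to exactly the same quantities $C_1/A^2$ and $B/(nA^2)$.
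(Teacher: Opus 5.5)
Your proposal is correct and follows the paper's proof: Lemma~\ref{lem:pair-fourth-moment} with $g=G_L$, $A\asymp\log n$, $C_1\lesssim1$, an influence ratio of order $1/n$ (it is exactly $2/n$ in de Jong's normalization, a harmless constant slip), de Jong's theorem, and the variance asymptotic \eqref{eq:green-critical-variance} with Slutsky. The only difference is the bound on $B=\norm{G_L}_4^4$: you use $\norm{G_L}_\infty\le G_L(0)\lesssim L^2$ and $B\le\norm{G_L}_\infty^2A\lesssim n\log n$, whereas the paper uses Hausdorff--Young, $\norm{G_L}_4\le\norm{\widehat{G_L}}_{\ell^{4/3}}$, to get $B\lesssim n$; either bound makes the right-hand side of \eqref{eq:pair-fourth-moment} vanish.
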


\begin{proof}
For $g=G_K$, the quantities in Lemma~\ref{lem:pair-fourth-moment}
satisfy, by Parseval's identity and
\eqref{eq:lattice-critical-four},
\begin{align}\label{eq:green-critical-kernel-bounds}
    A&=\sum_{0<\abs{k}\leq K}\lambda_k^{-2}
       =\frac{\log K}{8\pi^2}+O(1),\notag\\
    C_1&=\sum_{0<\abs{k}\leq K}\lambda_k^{-4}\lesssim1.
\end{align}
The inverse Hausdorff--Young inequality and
Lemma~\ref{lem:weighted-lattice-sums} also give
\begin{align}\label{eq:green-critical-fourth-norm}
    B=\norm{G_K}_4^4
    \leq\left(\sum_{0<\abs{k}\leq K}
                    \lambda_k^{-4/3}\right)^3
    \lesssim K^4.
\end{align}
Thus, at $K=L$, we have $A\asymp\log n$, $B\lesssim n$, and
$C_1\lesssim1$. Since $nU_{n,L}=S_{G_L}/n$, the fourth-moment ratio
is unchanged by this deterministic factor. Applying
\eqref{eq:pair-fourth-moment} yields
\begin{align}\label{eq:green-critical-fourth-moment}
    \abs{
        \frac{\E[(nU_{n,L})^4]}{\Var(nU_{n,L})^2}-3
    }
    \lesssim\frac1{(\log n)^2}\longrightarrow0.
\end{align}

It remains to check that no sample point carries a nonvanishing
fraction of the variance. Write
\[
    H_{ij,n}:=\frac2nG_L(X_i-X_j),\qquad i\ne j,
\]
so that $nU_{n,L}=\sum_{i<j}H_{ij,n}$. These kernels have finite
fourth moments and satisfy
\[
    \E[H_{ij,n}\mid X_i]
    =\E[H_{ij,n}\mid X_j]=0.
\]
All pair variances are equal. Hence
\begin{align}\label{eq:green-critical-influence}
    \max_{1\leq i\leq n}
    \frac{\sum_{j\ne i}\E H_{ij,n}^2}
         {\Var(nU_{n,L})}
    =\frac{n-1}{\binom n2}=\frac2n\longrightarrow0.
\end{align}
For a triangular array of canonical pair kernels, de Jong's theorem
\cite{dJ87} states that \eqref{eq:green-critical-fourth-moment} and
\eqref{eq:green-critical-influence} imply
\[
    \frac{nU_{n,L}}{\sqrt{\Var(nU_{n,L})}}
    \weak\mathcal N(0,1).
\]
Together with \eqref{eq:green-critical-variance}, this proves
\eqref{eq:green-critical-convergence}.
\end{proof}

\subsection{Proofs of the main theorems}
\label{ssec:proofs}

\begin{proof}[Proof of Theorem~\ref{thm:main}]
Let $d\in\{2,3\}$. By \eqref{eq:P1=0}, the Hoeffding decomposition,
and the fact that $U_n$ belongs to the second Hoeffding subspace,
orthogonality gives
\begin{align*}
    &\norm{n(W_{n,d}-\E W_{n,d})-nU_n}_2^2\\
    &\qquad=
        n^2\norm{P_2W_{n,d}-U_n}_2^2
        +n^2\sum_{r=3}^n\norm{P_rW_{n,d}}_2^2
        \longrightarrow0.
\end{align*}
Here the first term tends to zero by
Lemma~\ref{lem:uniform-pairwise-response} and
\eqref{eq:uniform-pairwise-response}, while the second tends to zero
by Lemma~\ref{lem:higher-orders} and \eqref{eq:higher-orders}.
Together with \eqref{eq:green-fixed-convergence}, this proves
\[
    n(W_{n,d}-\E W_{n,d})\weak L_d
\]
by Slutsky's theorem.

To transfer the variance, use $\E U_n=0$ and the reverse triangle
inequality in $L^2$:
\begin{align*}
    \abs{n\sqrt{\Var(W_{n,d})}-\sqrt{\Var(nU_n)}}
    \leq\norm{n(W_{n,d}-\E W_{n,d})-nU_n}_2
    \longrightarrow0.
\end{align*}
Equation~\eqref{eq:green-fixed-variance} therefore yields
\[
    n^2\Var(W_{n,d})\longrightarrow2\sum_{k\ne0}\lambda_k^{-2}.
\]
The value $\beta(2)/(12\pi^2)$ in dimension two follows from
Proposition~\ref{prop:green-fixed-limit}.
\end{proof}

\begin{proof}[Proof of Theorem~\ref{thm:main2}]
For $d=4$, Proposition~\ref{prop:aggregate4}, specifically
\eqref{eq:agg4-approximation}, gives
\begin{align*}
    \norm{
        \frac{n(W_{n,4}-\E W_{n,4})}{\sqrt{\log n}}
        -\frac{nU_{n,L}}{\sqrt{\log n}}
    }_2^2
    =O\bigl((\log n)^{-1}\bigr)\longrightarrow0.
\end{align*}
Combining this estimate with \eqref{eq:green-critical-convergence}
from Proposition~\ref{prop:green-critical-limit} and applying
Slutsky's theorem proves
\[
    \frac{n}{\sqrt{\log n}}(W_{n,4}-\E W_{n,4})
    \weak\mathcal N\left(0,\frac{1}{16\pi^2}\right).
\]

Since $U_{n,L}$ is centered, the reverse triangle inequality gives
\begin{align*}
    &\abs{
        \frac{n\sqrt{\Var(W_{n,4})}}{\sqrt{\log n}}
        -\sqrt{\frac{\Var(nU_{n,L})}{\log n}}
    }\\
    &\qquad\leq
    \norm{
        \frac{n(W_{n,4}-\E W_{n,4})-nU_{n,L}}{\sqrt{\log n}}
    }_2\longrightarrow0.
\end{align*}
The variance asymptotic \eqref{eq:green-critical-variance} now yields
\[
    \frac{n^2}{\log n}\Var(W_{n,4})
    \longrightarrow\frac{1}{16\pi^2},
\]
which completes the proof.
\end{proof}

%------------------------------------------------
%                 Appendix
%------------------------------------------------
\newpage
\appendix

\section{Hoeffding subspaces and decomposition}
\label{app:hoeffding}

Let $X_1,\ldots,X_n$ be independent with common law $\mu$. For
$S\subseteq[n]$, write $X_S=(X_i)_{i\in S}$.

For nonempty $S\subseteq[n]$, define $\mathcal H_S$ to be the space of random
variables $H=h(X_S)\in L^2$ satisfying
\begin{align*}
    \E\left[H\mid X_{S\setminus\{i\}}\right]
    =
    0
    \qquad
    \text{for every }i\in S.
\end{align*}
Let $\mathcal H_\varnothing$ be the space of constant random variables. If
$S\neq T$, then for every $H_S\in\mathcal H_S$ and $H_T\in\mathcal H_T$,
\begin{align*}
    \E[H_SH_T]=0.
\end{align*}

For $0\leq r\leq n$, define the Hoeffding subspace of order $r$ by
\begin{align*}
    \mathcal H_r
    :=
    \bigoplus_{\substack{S\subseteq[n]\\|S|=r}}
    \mathcal H_S.
\end{align*}
Then
\begin{align*}
    L^2\bigl((\T^d)^n,\mu^{\otimes n}\bigr)
    =
    \bigoplus_{S\subseteq[n]}\mathcal H_S
    =
    \bigoplus_{r=0}^n\mathcal H_r.
\end{align*}

For $F=F(X_1,\ldots,X_n)\in L^2$ and $S\subseteq[n]$, define
\begin{align*}
    P_SF
    :=
    \sum_{T\subseteq S}
    (-1)^{\abs{S}-\abs{T}}
    \E[F\mid X_T],
\end{align*}
where $\E[F\mid X_\varnothing]:=\E F$.
Then $P_SF\in\mathcal H_S$ and
\begin{align*}
    F
    =
    \sum_{S\subseteq[n]}P_SF.
\end{align*}
The projection of $F$ onto $\mathcal H_r$ is
\begin{align*}
    P_rF
    :=
    \sum_{\substack{S\subseteq[n]\\|S|=r}}P_SF,
\end{align*}
and
\begin{align*}
    F-\E F
    =
    \sum_{r=1}^nP_rF.
\end{align*}
Moreover,
\begin{align*}
    \E\left[(P_rF)(P_sF)\right]
    &=
    0,
    \qquad r\neq s,\\
    \Var(F)
    &=
    \sum_{r=1}^n\Var(P_rF).
\end{align*}

If $F$ is symmetric, then for every $S\subseteq[n]$ with $\abs{S}=r$,
the random variable $P_SF$ is given by the same symmetric canonical kernel.
More precisely, there exists a symmetric canonical function
$h_{r,F}:(\T^d)^r\to\R$ such that, whenever
$S=\{i_1<\cdots<i_r\}$,
\begin{align*}
    P_SF
    =
    h_{r,F}(X_{i_1},\ldots,X_{i_r}).
\end{align*}
Consequently,
\begin{align*}
    P_rF
    =
    \sum_{\substack{S\subseteq[n]\\\abs{S}=r}}
    h_{r,F}(X_S),
\end{align*}
for $S=\{i_1<\cdots<i_r\}$. By orthogonality,
\begin{align*}
    \E\left[\abs{P_rF}^2\right]
    =
    \binom{n}{r}\norm{h_{r,F}}_2^2.
\end{align*}

Let $a:(\T^d)^r\to\R$ be symmetric and canonical, and define
\begin{align*}
    U_a
    &:=
    \sum_{\substack{i_1,\ldots,i_r\in[n]\\
    \text{pairwise distinct}}}
    a(X_{i_1},\ldots,X_{i_r})\\
    &=
    r!\sum_{\substack{I\subseteq[n]\\|I|=r}}
    a(X_I).
\end{align*}
Then $U_a\in\mathcal H_r$, and hence $\Cov(F,U_a) = \Cov(P_rF,U_a)$.
For symmetric canonical kernels $a$ and $b$,
\begin{align*}
    \E[U_aU_b]
    =
    r!(n)_r\langle a,b\rangle.
\end{align*}
Moreover,
\begin{align*}
    \Cov(F,U_a)
    =
    (n)_r\langle h_{r,F},a\rangle.
\end{align*}

The product Fourier projections preserve the Hoeffding subspaces:
\begin{align*}
    \Pi_{\leq\Lambda}\mathcal H_r
    &\subseteq\mathcal H_r,
    &
    \Pi_{>\Lambda}\mathcal H_r
    &\subseteq\mathcal H_r,
\end{align*}
and
\begin{align*}
    \Pi_{\leq\Lambda}P_rF
    =
    P_r\Pi_{\leq\Lambda}F,
    \qquad
    \Pi_{>\Lambda}P_rF
    =
    P_r\Pi_{>\Lambda}F.
\end{align*}
The same statements hold componentwise for vector-valued random variables and
for random fields with the spatial variable held fixed.

\section{Lattice sums and the constants}
\label{app:lattice}
We first estimate the number of lattice points in a ball. For $R\geq1$, set
\begin{align*}
    N_d(R):=\#\{k\in\Z^d:0<\abs{k}\leq R\}.
\end{align*}

\begin{lemma}\label{lem:lattice-count}
For every $d\geq1$,
\begin{align*}
    N_d(R)=\abs{B_1^d}R^d+O_d(R^{d-1}),
\end{align*}
where $\abs{B_1^d}$ denotes the volume of the Euclidean unit ball.
\end{lemma}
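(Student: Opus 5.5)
The plan is the classical cube-covering (Gauss) argument. Nothing from earlier in the paper is needed beyond elementary geometry. Attach to each $k\in\Z^d$ the half-open unit cube $Q_k:=k+[-\tfrac12,\tfrac12)^d$. These cubes tile $\R^d$, each has volume one, and each has diameter $\sqrt d/2$. Including the origin changes the count only by one. It therefore suffices to estimate $\#\{k\in\Z^d:\abs{k}\le R\}$ and then subtract $1=O_d(R^{d-1})$, using $R\geq1$.

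\emph{Step 1: sandwich.} Set $c_d:=\sqrt d/2$. If $\abs{k}\le R$, then $Q_k\subseteq B_{R+c_d}$. Conversely, every point $x\in B_{R-c_d}$ lies in a unique cube $Q_k$, and that $k$ satisfies $\abs{k}\le\abs{x}+c_d\le R$. Since the cubes are disjoint and have unit volume, comparing volumes gives
\begin{align*}
    \abs{B_1^d}(R-c_d)_+^d
    \le\#\{k:\abs{k}\le R\}
    \le\abs{B_1^d}(R+c_d)^d.
\end{align*}

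\emph{Step 2: expansion.} For $R\geq 2c_d$, the mean value theorem gives
\begin{align*}
    \abs{(R\pm c_d)^d-R^d}\le d\,c_d(R+c_d)^{d-1}\lesssim_d R^{d-1}.
\end{align*}
Together with Step 1, this yields $N_d(R)=\abs{B_1^d}R^d+O_d(R^{d-1})$ in this range.

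\emph{Step 3: small radii.} On the bounded range $1\le R<2c_d$, the trivial bound $N_d(R)\le(2R+1)^d\lesssim_d 1\le R^{d-1}$ already gives the claim, after enlarging the implicit constant.

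No step is a genuine obstacle. The only care needed is bookkeeping: using the half-open cubes so the tiling is exact, and confirming that removing the origin and treating small $R$ each cost at most $O_d(R^{d-1})$ because $R\ge1$. For $d=1$ the statement is immediate from $N_1(R)=2\lfloor R\rfloor$, which also follows from the same argument.
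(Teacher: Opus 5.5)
Your proof is correct and is essentially the paper's argument: the same half-open cubes $Q_k$, the same sandwich $B_{R-c_d}\subseteq\bigcup_{\abs{k}\le R}Q_k\subseteq B_{R+c_d}$ with $c_d=\sqrt d/2$, and the same expansion of $(R\pm c_d)^d$, with your explicit handling of small $R$ and of the excluded origin filling in details the paper leaves implicit. One small slip: $\sqrt d/2$ is half the diameter of a unit cube, not its diameter, but you only use the correct fact that every point of $Q_k$ lies within distance $\sqrt d/2$ of $k$.
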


\begin{proof}
The cubes $Q_k:=k+[-1/2,1/2)^d$, $k\in\Z^d$, partition $\R^d$
and each has volume one. Every point in $Q_k$ lies within distance
$c_d:=\sqrt d/2$ of $k$. Hence, for $R>c_d$,
\begin{align*}
    B_{R-c_d}^d
    \subseteq
    \bigcup_{\abs{k}\leq R}Q_k
    \subseteq
    B_{R+c_d}^d.
\end{align*}
Taking volumes gives
\begin{align*}
    \abs{B_1^d}(R-c_d)^d
    \leq N_d(R)+1
    \leq \abs{B_1^d}(R+c_d)^d.
\end{align*}
Expanding the two bounds proves the result.
\end{proof}

\begin{lemma}\label{lem:weighted-lattice-sums}
For $p>0$ and $R\geq1$,
\begin{align*}
    \sum_{0<\abs{k}\leq R}\frac{1}{\abs{k}^p}
    &\lesssim_{d,p}
    \begin{cases}
        R^{d-p},&p<d,\\
        1,&p>d.
    \end{cases}
\end{align*}
In the critical case,
\begin{align*}
    \sum_{0<\abs{k}\leq R}\frac{1}{\abs{k}^d}
    =
    d\abs{B_1^d}\log R+O_d(1).
\end{align*}
Moreover, for $p>d$,
\begin{align*}
    \sum_{\abs{k}>R}\frac{1}{\abs{k}^p}
    \lesssim_{d,p}R^{d-p}.
\end{align*}
\end{lemma}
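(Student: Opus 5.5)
The plan is to compare the lattice sum with the corresponding radial integral through a dyadic shell decomposition, using Lemma~\ref{lem:lattice-count} as the only input.

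\emph{Shell counts.} For $R\geq1$ and the shells $A_j:=\{k\in\Z^d: 2^j\le |k|<2^{j+1}\}$, Lemma~\ref{lem:lattice-count} gives $\#A_j\le N_d(2^{j+1})\lesssim_d 2^{jd}$. On $A_j$ we have $|k|^{-p}\le 2^{-jp}$. Hence for $p\neq d$
\[
\sum_{0<|k|\le R}|k|^{-p}\le\sum_{0\le j\le \log_2 R}\#A_j\,2^{-jp}\lesssim_d\sum_{0\le j\le\log_2R}2^{j(d-p)}.
\]
This geometric sum is $\lesssim_{d,p}R^{d-p}$ when $p<d$ and $\lesssim_{d,p}1$ when $p>d$. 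The tail bound for $p>d$ follows the same way by summing over the shells $j\ge\lfloor\log_2R\rfloor$. This gives $\lesssim\sum_{2^{j+1}>R}2^{j(d-p)}\lesssim R^{d-p}$.

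\emph{Critical case.} Here a dyadic bound only gives $O(\log R)$, so the constant requires Abel summation, i.e.\ a Stieltjes integral against $N_d$. Writing $N_d(t)=|B_1^d|t^d+E(t)$ with $|E(t)|\lesssim t^{d-1}$ for $t\ge1$ (and $N_d(t)=0$ for $t<1$), we get
\[
\sum_{0<|k|\le R}|k|^{-d}=\int_{[1,R]}t^{-d}\,dN_d(t)=R^{-d}N_d(R)+d\int_1^R t^{-d-1}N_d(t)\,dt.
\]
The boundary term is $O(1)$. Inserting the main term $|B_1^d|t^d$ into the integral gives $d|B_1^d|\log R$. The error term contributes $d\int_1^R t^{-d-1}|E(t)|\,dt\lesssim\int_1^\infty t^{-2}\,dt<\infty$. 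Together these prove $d|B_1^d|\log R+O_d(1)$.

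\emph{Main obstacle.} The only delicate point is the critical case. The error in the lattice count must be integrable against $t^{-d-1}$, and this is exactly where the $O(R^{d-1})$ remainder of Lemma~\ref{lem:lattice-count}, rather than a cruder $O(R^d)$ bound, is needed. One must also treat the jump of $N_d$ at $t=1$ correctly, which is why the integration starts from $N_d(1^-)=0$ on the closed interval $[1,R]$. Everything else consists of routine geometric sums.
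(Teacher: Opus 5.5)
Your proof is correct. The critical case is essentially the paper's own argument; you depart from it only in the non-critical bounds and the tail.

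\textbf{Critical case.} The paper writes $|k|^{-p}=R^{-p}+p\int_{|k|}^R t^{-p-1}\dd t$ and exchanges the sum with the integral. This is exactly your Stieltjes integration by parts against $N_d$ with $N_d(1^-)=0$. Both produce the identity $\sum_{0<|k|\le R}|k|^{-p}=N_d(R)R^{-p}+p\int_1^R N_d(t)t^{-p-1}\dd t$, and both finish by inserting $N_d(t)=|B_1^d|t^d+O(t^{d-1})$.

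\textbf{Non-critical bounds and the tail.} The paper reuses that same identity for every $p\neq d$, inserting only $N_d(t)\lesssim_d t^d$. For the tail it uses the layer-cake identity $|k|^{-p}=p\int_{|k|}^\infty t^{-p-1}\dd t$, which gives $p\int_R^\infty N_d(t)t^{-p-1}\dd t-N_d(R)R^{-p}$. You instead split into dyadic shells and sum geometric series.

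\textbf{Comparison.} Off the critical exponent, both routes need only the crude count $N_d(t)\lesssim_d t^d$, so neither is harder there. The paper's single summation-by-parts formula handles all three statements uniformly. Your shell argument is more robust, since it works for any weight comparable to $|k|^{-p}$ on dyadic annuli. However, as you point out, it only yields $O(\log R)$ at $p=d$ and cannot see the constant $d|B_1^d|$, so you must switch to Abel summation there.
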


\begin{proof}
For every $k\in\Z^d$ with $0<\abs{k}\leq R$,
\begin{align*}
    \frac{1}{\abs{k}^p}
    =
    \frac{1}{R^p}
    +
    p\int_{\abs{k}}^R t^{-p-1}\dd t.
\end{align*}
Summing over $k$ and interchanging the finite sum and integral gives
\begin{align}\label{eq:sum_by_parts}
    \sum_{0<\abs{k}\leq R}\frac{1}{\abs{k}^p}
    &=
    \frac{N_d(R)}{R^p}
    +
    p\int_1^R
    \left(
        \sum_{0<\abs{k}\leq R}
        \mathbf 1_{\{\abs{k}\leq t\}}
    \right)t^{-p-1}\dd t \notag\\
    &=
    \frac{N_d(R)}{R^p}
    +
    p\int_1^R\frac{N_d(t)}{t^{p+1}}\dd t.
\end{align}
For $p\neq d$, the bounds follow from $N_d(t)\lesssim_d t^d$.
For $p=d$, Lemma \ref{lem:lattice-count} gives
\begin{align*}
    \sum_{0<\abs{k}\leq R}\frac{1}{\abs{k}^d}
    &=
    \frac{N_d(R)}{R^d}
    +
    d\abs{B_1^d}\int_1^R\frac{\dd t}{t}
    +
    O_d\left(\int_1^R\frac{\dd t}{t^2}\right)\\
    &=
    d\abs{B_1^d}\log R+O_d(1).
\end{align*}
Finally, if $p>d$, using $\abs{k}^{-p}=p\int_{\abs{k}}^\infty t^{-p-1}\dd t$, we obtain
\begin{align*}
    \sum_{\abs{k}>R}\frac{1}{\abs{k}^p}
    &=
    p\int_R^\infty
    \#\{k\in\Z^d:R<\abs{k}\leq t\}\,t^{-p-1}\dd t\\
    &=
    p\int_R^\infty
    \bigl(N_d(t)-N_d(R)\bigr)t^{-p-1}\dd t\\
    &=
    p\int_R^\infty\frac{N_d(t)}{t^{p+1}}\dd t
    -
    \frac{N_d(R)}{R^p}.
\end{align*}
Together with Lemma \ref{lem:lattice-count}, we have the final bound.
\end{proof}

In particular, since $\lambda_k=4\pi^2\abs{k}^2$ and
$\abs{B_1^4}=\pi^2/2$, in dimension four we have
\begin{align}\label{eq:lattice-critical-four}
    \sum_{0<\abs{k}\leq R}\frac{1}{\lambda_k^2}
    =
    \frac{\log R}{8\pi^2}+O(1).
\end{align}

\begin{lemma}\label{lem:square-lattice-sum}
For every $s>1$,
\begin{align*}
    \sum_{k\in\Z^2\setminus\{0\}}\frac{1}{\abs{k}^{2s}}
    =
    4\zeta(s)\beta(s),
\end{align*}
where
\begin{align*}
    \zeta(s):=\sum_{m=1}^{\infty}\frac{1}{m^s},
    \qquad
    \beta(s):=\sum_{j=0}^{\infty}\frac{(-1)^j}{(2j+1)^s}.
\end{align*}
In particular,
\begin{align*}
    \sum_{k\in\Z^2\setminus\{0\}}\frac{1}{\lambda_k^2}
    =
    \frac{\beta(2)}{24\pi^2}.
\end{align*}
\end{lemma}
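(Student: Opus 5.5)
The plan is to prove the lattice identity by the classical Jacobi two-square theorem, and then read off the $\lambda_k$ sum by rescaling. Write $r_2(m):=\#\{(a,b)\in\Z^2:a^2+b^2=m\}$. Jacobi's theorem states that
\[
    r_2(m)=4\sum_{e\mid m}\chi(e),
\]
where $\chi$ is the nontrivial Dirichlet character modulo $4$: $\chi(e)=0$ for $e$ even and $\chi(e)=(-1)^{(e-1)/2}$ for $e$ odd. I will take this theorem as a standard input and cite it, rather than reprove it.

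Grouping lattice points by the value $m=\abs{k}^2$ gives
\[
    \sum_{k\ne0}\abs{k}^{-2s}=\sum_{m\geq1}r_2(m)m^{-s}=4\sum_{m\geq1}\Bigl(\sum_{e\mid m}\chi(e)\Bigr)m^{-s}.
\]
For $s>1$ all series converge absolutely. The double sum over pairs $(e,f)$ with $m=ef$ therefore factors as a Dirichlet convolution:
\[
    \Bigl(\sum_f f^{-s}\Bigr)\Bigl(\sum_e\chi(e)e^{-s}\Bigr)=\zeta(s)\,L(s,\chi).
\]
Since $L(s,\chi)=\sum_{j\geq0}(-1)^j(2j+1)^{-s}=\beta(s)$, the first identity follows. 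The only point needing care is justifying the rearrangement. Absolute convergence of $\sum_{e,f}(ef)^{-s}$ for $s>1$ handles this; equivalently, one can note that $r_2(m)\lesssim_\varepsilon m^\varepsilon$.

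For the second claim, set $s=2$ and use $\lambda_k^2=16\pi^4\abs{k}^4$ together with $\zeta(2)=\pi^2/6$. This gives
\[
    \sum_{k\ne0}\lambda_k^{-2}=\frac{4\zeta(2)\beta(2)}{16\pi^4}=\frac{4\cdot\frac{\pi^2}{6}\,\beta(2)}{16\pi^4}=\frac{\beta(2)}{24\pi^2}.
\]

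I expect no real obstacle here. The substantive content is Jacobi's theorem, which I would quote rather than reprove. If a self-contained proof were wanted, the route would be through unique factorization in the Gaussian integers $\Z[i]$. There, $\frac14 r_2(m)$ counts ideals of norm $m$, so the Dedekind zeta function of $\Q(i)$ equals $\frac14\sum_{k\ne0}\abs{k}^{-2s}$. That zeta function factors as $\zeta(s)L(s,\chi)$ by the splitting behaviour of rational primes in $\Z[i]$: $2$ ramifies, primes $p\equiv1 \pmod 4$ split, and primes $p\equiv3\pmod 4$ stay inert. Comparing Euler products prime by prime then gives the same identity.
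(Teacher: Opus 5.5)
Your proposal is correct and follows the paper's proof essentially step for step: Jacobi's two-square theorem, grouping lattice points by $\abs{k}^2=m$, factoring the Dirichlet convolution as $\zeta(s)\beta(s)$, and then rescaling with $\lambda_k^2=16\pi^4\abs{k}^4$ and $\zeta(2)=\pi^2/6$. You also note that absolute convergence for $s>1$ justifies the rearrangement, a point the paper leaves implicit.
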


\begin{proof}
Let $r_2(m)$ count the integer pairs $(k_1,k_2)$ satisfying
$k_1^2+k_2^2=m$. By Jacobi's two-square theorem \cite[Theorem~14.3, p.~428]{Nathanson00},
\begin{align*}
    r_2(m)=4\sum_{d\mid m}\chi_4(d),
    \qquad
    \chi_4(d):=
    \begin{cases}
        0,&d\text{ even},\\
        (-1)^{(d-1)/2},&d\text{ odd}.
    \end{cases}
\end{align*}
Grouping lattice points by their squared length and writing $m=d\ell$,
we obtain
\begin{align*}
    \sum_{k\in\Z^2\setminus\{0\}}\frac{1}{\abs{k}^{2s}}
    &=
    \sum_{m=1}^{\infty}\frac{r_2(m)}{m^s}\\
    &=
    4\sum_{d,\ell\geq1}\frac{\chi_4(d)}{d^s\ell^s}\\
    &=
    4\zeta(s)\beta(s).
\end{align*}
The second identity follows from $\lambda_k=4\pi^2\abs{k}^2$
and $\zeta(2)=\pi^2/6$.
\end{proof}

\begin{lemma}\label{lem:lattice-band-sums}
As $R\to\infty$, in dimension three,
\begin{align*}
    \sum_{R<\abs{k}\leq 2R}\frac{1}{\lambda_k^2}
    \asymp R^{-1}.
\end{align*}
In dimension four, for every fixed $\varepsilon\in(0,1)$,
\begin{align*}
    \sum_{\varepsilon R<\abs{k}\leq R}
    \frac{1}{\lambda_k^2}
    =
    \frac{\log(1/\varepsilon)}{8\pi^2}
    +O_\varepsilon(R^{-1}).
\end{align*}
\end{lemma}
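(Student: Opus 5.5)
The plan is to compare both sums with integrals through the counting function $N_d$, using the summation-by-parts formula \eqref{eq:sum_by_parts} and its analogue over an annulus, together with the lattice count of Lemma~\ref{lem:lattice-count}. For $0<a<b$ and $p>0$, the same argument that proved \eqref{eq:sum_by_parts}, now starting from $\abs{k}^{-p}=b^{-p}+p\int_{\abs{k}}^{b}t^{-p-1}\dd t$ for $a<\abs{k}\leq b$, gives
\begin{align*}
    \sum_{a<\abs{k}\leq b}\frac{1}{\abs{k}^p}
    =\frac{N_d(b)-N_d(a)}{b^p}
    +p\int_a^b\frac{N_d(t)-N_d(a)}{t^{p+1}}\dd t .
\end{align*}
Both claims then reduce to inserting $N_d(t)=\abs{B_1^d}t^d+O_d(t^{d-1})$ into this identity.

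\emph{Dimension three.} I will take $p=4$, $a=R$, $b=2R$. The crude bound $N_3(t)\lesssim t^3$ gives the upper bound $\lesssim R^{-1}$ directly. For the lower bound I will avoid the integral and count directly. By Lemma~\ref{lem:lattice-count}, $N_3(2R)-N_3(R)=7\abs{B_1^3}R^3+O(R^2)\gtrsim R^3$ once $R$ is large. Each such $k$ satisfies $\lambda_k^{-2}\geq (16\pi^4)^{-1}(2R)^{-4}$, so the sum is $\gtrsim R^3\cdot R^{-4}=R^{-1}$. The upper bound follows the same way: there are at most $O(R^3)$ points, each with $\lambda_k^{-2}\lesssim R^{-4}$.

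\emph{Dimension four.} I will take $p=4$, $a=\varepsilon R$, $b=R$, and expand each term.
\begin{itemize}
    \item The boundary term is $\abs{B_1^4}(1-\varepsilon^4)+O_\varepsilon(R^{-1})$.
    \item The integral of $\abs{B_1^4}t^4\cdot 4t^{-5}$ gives $4\abs{B_1^4}\log(1/\varepsilon)$.
    \item The term involving $-N_4(\varepsilon R)$ gives $-\abs{B_1^4}\varepsilon^4R^4\,(\varepsilon R)^{-4}\,(1-\varepsilon^4)+O_\varepsilon(R^{-1})$, which cancels the boundary term exactly.
    \item The error $O(t^3)$ integrates against $t^{-5}$ over $[\varepsilon R,R]$ to $O_\varepsilon(R^{-1})$.
\end{itemize}
Hence
\begin{align*}
    \sum_{\varepsilon R<\abs{k}\leq R}\abs{k}^{-4}
    =4\abs{B_1^4}\log(1/\varepsilon)+O_\varepsilon(R^{-1})
    =2\pi^2\log(1/\varepsilon)+O_\varepsilon(R^{-1}).
\end{align*}
Dividing by $16\pi^4$ yields $\log(1/\varepsilon)/(8\pi^2)$, consistent with \eqref{eq:lattice-critical-four}.

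No step is a real obstacle. The only point needing care is the bookkeeping in dimension four: one must check that the main-order boundary contributions cancel, so that only $O_\varepsilon(R^{-1})$ remains rather than an $O(1)$ term. An alternative is to subtract the two critical asymptotics in Lemma~\ref{lem:weighted-lattice-sums}, but that yields only an $O(1)$ error. This is why I will use the annulus identity with the $O(t^{d-1})$ remainder tracked explicitly.
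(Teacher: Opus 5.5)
Your proposal is correct and follows essentially the paper's route: your annulus identity is algebraically the same as the difference of two instances of \eqref{eq:sum_by_parts}, namely $N_d(B)/B^4-N_d(A)/A^4+4\int_A^B N_d(t)t^{-5}\dd t$, into which the paper inserts Lemma~\ref{lem:lattice-count}; for $d=4$ the boundary terms there cancel at once, which is the cancellation you track by hand. The only difference is in dimension three, where your direct count of the annulus suffices for $\asymp$, whereas the paper runs the same expansion to obtain the sharper $\sum_{R<\abs{k}\leq2R}\abs{k}^{-4}=2\pi/R+O(R^{-2})$.
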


\begin{proof}
By \eqref{eq:sum_by_parts}, for $1\leq A<B$,
\begin{align*}
    \sum_{A<\abs{k}\leq B}\frac{1}{\abs{k}^4}
    =
    \frac{N_d(B)}{B^4}
    -
    \frac{N_d(A)}{A^4}
    +
    4\int_A^B\frac{N_d(t)}{t^5}\dd t.
\end{align*}
By Lemma \ref{lem:lattice-count},
$N_d(t)=\abs{B_1^d}t^d+O_d(t^{d-1})$. Thus, for $d=3,4$,
\begin{align*}
    \sum_{A<\abs{k}\leq B}\frac{1}{\abs{k}^4}
    =
    d\abs{B_1^d}\int_A^B t^{d-5}\dd t
    +O_d(A^{d-5}).
\end{align*}
For $d=3$, taking $A=R$ and $B=2R$ gives
\begin{align*}
    \sum_{R<\abs{k}\leq 2R}\frac{1}{\abs{k}^4}
    =
    \frac{2\pi}{R}+O(R^{-2}).
\end{align*}
For $d=4$, taking $A=\varepsilon R$ and $B=R$ gives
\begin{align*}
    \sum_{\varepsilon R<\abs{k}\leq R}\frac{1}{\abs{k}^4}
    =
    2\pi^2\log(1/\varepsilon)+O_\varepsilon(R^{-1}).
\end{align*}
Dividing by $16\pi^4$ proves both claims.
\end{proof}

\section{Path stresses and segment-length moments ($d=4$)}
\label{app:stress}

\begin{proof}[Proof of Lemma \ref{lem:stress-L2}]
Fix $a>0$ and first assume $L\geq\max\{1,a\}$, so the periodic Gaussian bound in Step 4 is
uniform in $L$. The finitely many remaining sample sizes can be absorbed into $C_a$: the
deterministic displacement bound gives finite total variation for $S_0,S_1$, and heat smoothing
together with the $L^2$ boundedness of $\mathcal K$ gives a finite bound for each of these sizes.
\emph{Step 1: physical coordinates.} Let
\begin{align*}
    \widehat S_i:=nL^2\ (S_L)_\#S_i,
    \qquad i\in\{0,1\},
    \qquad
    \widehat\Sigma:=-\widehat S_1+\mathcal K\widehat S_0,
\end{align*}
where the factor $n$ comes from the mass of $\widehat\pi_n$ and the factor $L^2$ from the squared
 length that we owe to $\df_x(y)\df_x(y)^\top$. The Fourier multiplier in the definition of $\mathcal K$ is $k_ik_\ell/\abs{k}^2$ and  homogeneous of degree zero (invariant when replacing $k$ by $Lk$), so it
commutes with the dilation and $\widehat\Sigma$ is thus the dilation of $\Sigma$. Let $q_a^{(L)}$
be the heat kernel on $\T^4_L$, that is, the density with respect to Lebesgue measure with Fourier
coefficients $e^{-4\pi^2a^2\abs k^2/L^2}$, so that
\begin{align*}
    q_a^{(L)}(Lx)=L^{-4}q_{a/L}(x),
    \qquad x\in\T^4 .
\end{align*}
Combining the two displays above,
\begin{align}\label{eq:stress-scaling}
    \bigl(q_a^{(L)}*\widehat\Sigma\bigr)(Lx)
    =
    nL^{-2}\bigl(q_{a/L}*\Sigma\bigr)(x)
    =
    L^{2}\bigl(q_{a/L}*\Sigma\bigr)(x),
\end{align}
and therefore, by the change of variables $w=Lx$,
\begin{align}\label{eq:stress-reduction}
    \E\left[
        \norm{q_{a/L}*\Sigma}_{L_x^2}^2
    \right]
    =
    L^{-4}\,
    \E\left[
        \frac{1}{L^4}
        \int_{\T^4_L}
        \bigl|
            \bigl(q_a^{(L)}*\widehat\Sigma\bigr)(w)
        \bigr|^2
        \dd w
    \right].
\end{align}
It therefore suffices to bound the normalized spatial average on the right by a constant $C_a$ to prove \eqref{eq:stress-L2}.
 
\emph{Step 2: removing $\mathcal K$ and the spatial average.} Write
$\norm{F}_{L^2(\T^4_L)}^2:=L^{-4}\int_{\T^4_L}\abs F^2$. Since $\mathcal K$ is a Fourier multiplier
and $k_ik_\ell/\lvert k \rvert^2$ is bounded, it is bounded on $L^2(\T^4_L)$ and commutes with convolution by $q_a^{(L)}$.
Hence
\begin{align*}
    \norm{q_a^{(L)}*\widehat\Sigma}_{L^2(\T^4_L)}
    \leq
    \norm{q_a^{(L)}*\widehat S_1}_{L^2(\T^4_L)}
    +
    C\norm{q_a^{(L)}*\widehat S_0}_{L^2(\T^4_L)} .
\end{align*}
The sample is i.i.d.\ uniform and the periodic optimal plan is equivariant, implying that  the fields
$w\mapsto(q_a^{(L)}*\widehat S_i)(w)$ are stationary. By Fubini,
\begin{align}\label{eq:stress-stationarity}
    \E\norm{q_a^{(L)}*\widehat S_i}_{L^2(\T^4_L)}^2
    =
    \frac{1}{L^4}\int_{\T^4_L}
    \E\bigl|\bigl(q_a^{(L)}*\widehat S_i\bigr)(w)\bigr|^2\dd w
    =
    \E\bigl|\bigl(q_a^{(L)}*\widehat S_i\bigr)(0)\bigr|^2 ,
\end{align}
so it is enough to bound $\E\bigl|(q_a^{(L)}*\widehat S_i)(w)\bigr|^2$ uniformly in $w$ and $L$.
 
\emph{Step 3: the mass of the stress in a ball.} Both $\widehat S_0$ and $\widehat S_1$ are dominated
in total variation by the positive measure
\begin{align*}
    \widehat\nu(A)
    :=
    \int_0^1\int
    \1_{\{\widehat\gamma_{x,y}(t)\in A\}}
    \abs{x-y}^2
    \dd\widehat\pi_n(x,y)\dd t,
\end{align*}
because $\abs{\df\df^\top}=\abs{\df}^2$ and the weight $1-t$ in $S_0$ is at most one. Fix $w$ and
$m\geq0$. Every segment contributing to $\widehat\nu(B_{m+1}(w))$ meets $B_{m+1}(w)$, hence has
length at most $D_{w,m+1}$, and its endpoint $y$ on the Lebesgue side lies in
$B_{m+1+D_{w,m+1}}(w)$. Since the second marginal of $\widehat\pi_n$ is the Lebesgue measure, the mass of
such pairs is at most the volume of that ball, so
\begin{align}\label{eq:stress-ball-mass}
    \abs{\widehat S_i}\bigl(B_{m+1}(w)\bigr)
    \leq
    \widehat\nu\bigl(B_{m+1}(w)\bigr)
    \lesssim
    D_{w,m+2}^2
    \bigl(m+2+D_{w,m+2}\bigr)^4 .
\end{align}
By $D^2(m+2+D)^4\lesssim(m+2)^4D^2+D^6$, H\"older's inequality and \eqref{eq:segment-moments},
\begin{align}\label{eq:stress-ball-moment}
    \norm{
        \abs{\widehat S_i}\bigl(B_{m+1}(w)\bigr)
    }_{L^2(\Omega)}
    \lesssim
    (m+2)^4\norm{D_{w,m+2}}_{L^4}^2
    +
    \norm{D_{w,m+2}}_{L^{12}}^6
    \lesssim
    (1+m)^6 .
\end{align}
 
\emph{Step 4: summation over shells.} Partition $\T^4_L$ into the unit shells
$B_{m+1}(w)\setminus B_m(w)$, $m\geq0$. On the $m$-th shell,
\begin{align*}
    q_a^{(L)}
    \lesssim
    a^{-4}e^{-m^2/(Ca^2)} ,
\end{align*}
so that
\begin{align*}
    \bigl|\bigl(q_a^{(L)}*\widehat S_i\bigr)(w)\bigr|
    \leq
    \sum_{m\geq0}
    \left(
        \sup_{B_{m+1}(w)\setminus B_m(w)}q_a^{(L)}
    \right)
    \abs{\widehat S_i}\bigl(B_{m+1}(w)\bigr)
    \lesssim
    a^{-4}
    \sum_{m\geq0}
    e^{-m^2/(Ca^2)}
    \abs{\widehat S_i}\bigl(B_{m+1}(w)\bigr).
\end{align*}
Minkowski's inequality in $L^2(\Omega)$ and \eqref{eq:stress-ball-moment} give
\begin{align*}
    \E\bigl|\bigl(q_a^{(L)}*\widehat S_i\bigr)(w)\bigr|^2
    \lesssim
    a^{-8}
    \left(
        \sum_{m\geq0}e^{-m^2/(Ca^2)}(1+m)^6
    \right)^2
    \leq
    C_a ,
\end{align*}
a Gaussian summed against a polynomial. With \eqref{eq:stress-stationarity} and Step 2, this bounds
the right-hand side of \eqref{eq:stress-reduction} by $C_aL^{-4}$, which is \eqref{eq:stress-L2}.
\end{proof}

\section{A remark on superconcentration}
\label{app:superconcentration}

Define the Dirichlet energy of $W_{n,d}$ by
\begin{align*}
    \mathcal E_{n,d}
    :=
    \sum_{i=1}^n
    \E\abs{\nabla_{X_i}W_{n,d}}^2.
\end{align*}
The product Poincar\'e inequality gives
\begin{align*}
    \Var(W_{n,d})
    \leq
    \frac{1}{4\pi^2}\mathcal E_{n,d}.
\end{align*}
Following \cite{Cha14}, we say that $W_{n,d}$ is superconcentrated if
\begin{align*}
    \frac{\Var(W_{n,d})}{\mathcal E_{n,d}}
    \longrightarrow0.
\end{align*}

Equivalently, it is superconcentrated if Poincar\'e inequality gives asymptotically non-sharp bound.

\begin{proposition}\label{prop:superconcentration}
For $d\in\{2,3,4\}$, the Dirichlet energy satisfies
\begin{align*}
    \mathcal E_{n,d}
    \asymp
    \begin{cases}
        n^{-2}\log n,&d=2,\\
        n^{-5/3},&d=3,\\
        n^{-3/2},&d=4.
    \end{cases}
\end{align*}
Consequently,
\begin{align*}
    \frac{\Var(W_{n,d})}{\mathcal E_{n,d}}
    \asymp
    \begin{cases}
        (\log n)^{-1},&d=2,\\
        n^{-1/3},&d=3,\\
        n^{-1/2}\log n,&d=4,
    \end{cases}
\end{align*}
and hence $W_{n,d}$ is superconcentrated.
\end{proposition}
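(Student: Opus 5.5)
Both claims reduce to two-sided bounds on the Dirichlet energy $\mathcal E_{n,d}$, combined with the variance asymptotics already proved in Theorems~\ref{thm:main} and~\ref{thm:main2}. By \eqref{eq:derivative_identity},
\[
\mathcal E_{n,d}=\frac{4}{n^2}\E\sum_{i=1}^n|d_i|^2 ,
\]
and Lemma~\ref{lem:high-frequency} already gives the upper bound $\mathcal E_{n,d}\le 4n^{-1}\E W_{n,d}$. The known mean asymptotics, $\E W_{n,2}\asymp n^{-1}\log n$ by \cite{AST19} and $\E W_{n,d}\asymp n^{-2/d}$ for $d\ge3$ by \cite{GT21}, then yield the three upper rates. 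The remaining task is the matching lower bound
\[
\E\sum_i|d_i|^2\gtrsim n\,\E W_{n,d}.
\]
Once it is established, the ratio asymptotics follow by dividing: $\Var(W_{n,d})\asymp n^{-2}$ for $d=2,3$ and $\asymp n^{-2}\log n$ for $d=4$. The three ratios are $(\log n)^{-1}$, $n^{-1/3}$ and $n^{-1/2}\log n$, each tending to zero.

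For the lower bound I would use the displacement input together with the linearization. The pairing of $\mathfrak j$ with $-\nabla u$ is controlled in each dimension. For $d\in\{2,3\}$ I would use Proposition~\ref{prop:GH}: at every dyadic scale $s\ge L^{-1}$ we have $J_s\approx-\nabla u_s$, with an error $O(L^{-2}s^{-1})$ in $L^2_{\omega,x}$. For $d=4$ I would use the square-function estimate of Lemma~\ref{lem:stress-square-function}. Cauchy--Schwarz gives
\[
\frac1n\sum_i|d_i|^2=\int|d|^2\,\dd\mu_n\ \ge\ \frac{\bigl|\langle\mathfrak j,\varphi\rangle\bigr|^2}{\int|\varphi|^2\,\dd\mu_n}
\]
for any test field $\varphi$. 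I would take $\varphi=-\nabla u_{s}$ with $s$ a fixed small multiple of $L^{-1}$ in $d=3,4$, and with the full range of dyadic scales $s\in[L^{-1},1]$ summed in $d=2$. The numerator is then essentially $\|\nabla u_s\|_2^4$ up to the displacement-input error. The denominator is comparable to $\|\nabla u_s\|_2^2$, because $\rho_s-1$ is small in $L^4$, again by Proposition~\ref{prop:GH}. This gives
\[
\E\sum_i|d_i|^2\gtrsim n\,\E\|\nabla u_s\|_2^2 .
\]
A direct Fourier computation as in \eqref{eq:2_cov_error} gives $\E\|\nabla u_s\|_2^2=n^{-1}\sum_k|\widehat\eta_s(k)|^2\lambda_k^{-1}$. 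This is of order $n^{-1}\log n$ in $d=2$ and $n^{-1}L^{d-2}=n^{-2/d}$ in $d\ge3$, which matches $\E W_{n,d}$.

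The main obstacle is the error term in this Cauchy--Schwarz step at the smallest scale $s\sim L^{-1}$. There the displacement-input error $L^{-2}s^{-1}\sim L^{-1}$ is of the same order as $\|\nabla u_s\|_2$ itself, so a plain subtraction may lose the constant. I would try first to take $s=cL^{-1}$ with $c$ large but fixed. Then $\|\nabla u_s\|_2^2$ still captures a fixed fraction of the Green sum in $d=3,4$, while the relative error scales like $c^{-1}$, so it can be absorbed.

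As a fallback, I would bound $\sum_i|d_i|^2$ from below directly by geometry. With probability bounded below, a cell $A_i$ is far from centred at $X_i$, because $X_i$ is uniform and independent of the cells of the other points. This should give $\E|d_i|^2\gtrsim L^{-2}$ for $d\ge3$, and $L^{-2}$ is exactly $n^{-1}\cdot n\,\E W_{n,d}$ up to constants. For $d=2$ the additional $\log n$ must come from the long-range Poisson field, so the multiscale test-field argument is needed there.
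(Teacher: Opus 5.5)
\textbf{Gap in $d=4$.} Your argument breaks down in $d=4$. This is exactly where you claim the single-scale error can be absorbed by taking $s=cL^{-1}$ with $c$ large.

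At scale $s$ the signal is $\E\norm{q_{\vartheta s}*\nabla u}_{2}^2=n^{-1}\sum_{k\neq0}e^{-8\pi^2\vartheta^2s^2\abs{k}^2}\lambda_k^{-1}\asymp\vartheta^{-2}L^{-4}s^{-2}$. Lemma~\ref{lem:stress-square-function}, applied to a single summand, only gives $\norm{q_{\vartheta s}*(\mathfrak{j}+\nabla u)}_{L^2_{\omega,x}}^2\leq C_\vartheta L^{-4}s^{-2}$. Both scale like $s^{-2}$, so the error-to-signal ratio is bounded by a constant depending on $\vartheta$ but not on $c$. Enlarging $c$ therefore buys nothing.

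In $d=3$, Proposition~\ref{prop:GH} does give relative error $(Ls)^{-1/2}=c^{-1/2}$ (not $c^{-1}$, but still absorbable). That proposition has no four-dimensional analogue in the paper, and Lemma~\ref{lem:stress-L2} provides no single-scale gain. This is the criticality of dimension four: the linearization error is as large as the Poisson signal at every individual scale. It becomes negligible only after summing over scales, which is why the paper uses \eqref{eq:stress-square-function} only as a whole sum.

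\textbf{How to repair it.} Exploit that sum. Set $\mathcal E_\ell:=q_{\vartheta s_\ell}*(\mathfrak{j}+\nabla u)$ and test against
$\varphi:=-\sum_\ell s_\ell^2\,q_{\vartheta s_\ell}*q_{\vartheta s_\ell}*\nabla u$,
where the sum runs over a fixed window of $M$ dyadic scales $2L^{-1}\leq s_\ell\lesssim 2^ML^{-1}$. Pathwise,
$\langle\mathfrak{j},\varphi\rangle=\sum_\ell s_\ell^2\norm{q_{\vartheta s_\ell}*\nabla u}_2^2-\sum_\ell s_\ell^2\langle\mathcal E_\ell,q_{\vartheta s_\ell}*\nabla u\rangle$.
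The three pieces are controlled as follows.
\begin{itemize}
\item The first sum has expectation $\asymp M\vartheta^{-2}L^{-4}$.
\item The second is $\lesssim_\vartheta\sqrt{M}L^{-4}$ in expectation, by Cauchy--Schwarz in $(\omega,x,\ell)$ together with \eqref{eq:stress-square-function}.
\item The denominator satisfies $\E\int\abs{\varphi}^2\dd\mu_n=\frac{n-1}{n}\E\norm{\varphi}_2^2\lesssim_\vartheta 4^ML^{-6}$. This holds because $\varphi=\frac1n\sum_jK(\cdot-X_j)$ with $K$ odd and centred.
\end{itemize}
For $M$ large this yields $\E\frac1n\sum_i\abs{d_i}^2\gtrsim L^{-2}$, i.e.\ $\mathcal E_{n,4}\gtrsim n^{-3/2}$.

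\textbf{Comparison with the paper.} The paper uses the same ``a logarithm beats a constant'' mechanism, but on the Hoeffding side rather than by pairing displacements with a test field. It bounds $\mathcal E_{n,d}\geq 2\min_{k\in B}\lambda_k\,\E\abs{(P_2W_{n,d})_B}^2$ for a frequency band $B$. It then imports the explicit band variance of the Green statistic through the already proved approximation $P_2W_{n,d}\approx U_n$ (resp.\ $U_{n,L}$). In $d=4$ the band $\varepsilon L<\abs{k}\leq L$ carries Green variance $\frac{1}{4\pi^2}\log\frac1\varepsilon$, which beats the uniform error $C_0$ of Proposition~\ref{prop:aggregate4}.

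\textbf{Remaining points.} Your geometric fallback is not a substitute: $A_i$ is not independent of $X_i$, since the optimal partition is determined by all points jointly.

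In $d=2,3$ your route does work, with two adjustments:
\begin{itemize}
\item The test field should be $-\eta_s*\nabla u_s$, so that the pairing becomes $-\langle J_s,\nabla u_s\rangle$ and can be expanded through $E_s$.
\item The denominator should be handled by the exchangeability computation above, not by smallness of $\rho_s-1$, which concerns the smoothed density rather than $\mu_n$.
\end{itemize}
In $d=2$ a single scale $s=cL^{-1}$ already suffices, since $\E\norm{\nabla u_s}_2^2\asymp n^{-1}\log n$ carries the logarithm.
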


\begin{proof}
We first prove the upper bounds. By Lemma \ref{lem:high-frequency},
\begin{align*}
    \mathcal E_{n,d}
    \leq
    \frac4n\E W_{n,d}
    \lesssim
    \begin{cases}
        n^{-2}\log n,&d=2,\\
        n^{-5/3},&d=3,\\
        n^{-3/2},&d=4.
    \end{cases}
\end{align*}
It remains to prove the matching lower bounds.

\medskip
\paragraph{Lower bound for $d=2$.}
By orthogonality of the Hoeffding and Fourier projections,
\begin{align*}
    \mathcal E_{n,2}
    \geq
    \sum_{i=1}^n
    \E\abs{\nabla_{X_i}(P_2W_{n,2})_K}^2.
\end{align*}
Take $K=n^{1/8}$. The estimates in the proof of
Lemma \ref{lem:uniform-pairwise-response}, with $\alpha=1/4$, give
\begin{align*}
    n^2\E\abs{P_2W_{n,2}-U_n}^2
    \lesssim n^{-1/2}\log n.
\end{align*}
Since each pair mode with $\abs{k}\leq K$ has product eigenvalue
$2\lambda_k\leq8\pi^2K^2$, it follows that
\begin{align*}
    n^2\sum_{i=1}^n
    \E\abs{\nabla_{X_i}(P_2W_{n,2}-U_n)_K}^2
    &\leq
    8\pi^2K^2n^2\E\abs{P_2W_{n,2}-U_n}^2\\
    &\lesssim n^{-1/4}\log n
    \longrightarrow0.
\end{align*}
On the other hand, the Fourier representation of $U_{n,K}$ gives
\begin{align*}
    n^2\sum_{i=1}^n
    \E\abs{\nabla_{X_i}U_{n,K}}^2
    =
    4\frac{n-1}{n}
    \sum_{0<\abs{k}\leq K}\frac1{\lambda_k}
    \asymp\log K
    \asymp\log n.
\end{align*}
Therefore, by the triangle inequality,
\begin{align*}
    (n^2\mathcal E_{n,2})^{1/2}
    &\geq
    \left(
        n^2\sum_{i=1}^n
        \E\abs{\nabla_{X_i}(P_2W_{n,2})_K}^2
    \right)^{1/2}\\
    &\geq
    \left(
        n^2\sum_{i=1}^n
        \E\abs{\nabla_{X_i}U_{n,K}}^2
    \right)^{1/2}\\
    &\quad-
    \left(
        n^2\sum_{i=1}^n
        \E\abs{\nabla_{X_i}(P_2W_{n,2}-U_n)_K}^2
    \right)^{1/2}\\
    &\geq c\sqrt{\log n}-o(1).
\end{align*}
Hence $\mathcal E_{n,2}\gtrsim n^{-2}\log n$.

\paragraph{Lower bound for $d=3$.}
Let $L=n^{1/3}$ and fix $\varepsilon\in(0,1/2)$, to be chosen
sufficiently small. Set
\begin{align*}
    K:=\varepsilon L,
    \qquad
    B:=\{k\in\Z^3:K\leq\abs{k}<2K\}.
\end{align*}
For sufficiently large $n$, we have $L^{-1}\leq K^{-1}\leq1$.
Applying Lemma \ref{lem:one-band-response} with $s=K^{-1}$ and
then Lemma \ref{lem:pairwise-duality}, we obtain
\begin{align*}
    n^2\E\abs{(P_2W_{n,3}-U_n)_B}^2
    &\lesssim
    L^{-1}+L^{-3}K^2+n^{-2}K^{-1}\\
    &=
    K^{-1}\left(\varepsilon+\varepsilon^3+n^{-2}\right).
\end{align*}
On the other hand, by Lemma \ref{lem:lattice-band-sums}
\begin{align*}
    n^2\E\abs{(U_n)_B}^2
    =
    2\frac{n-1}{n}
    \sum_{k\in B}\lambda_k^{-2}
    \asymp K^{-1},
\end{align*}
where the comparison constants are independent of $\varepsilon$.
Choose $\varepsilon$ sufficiently small. Then, for sufficiently
large $n$, the triangle inequality gives
\begin{align*}
    \left(n^2\E\abs{(P_2W_{n,3})_B}^2\right)^{1/2}
    &\geq
    \left(n^2\E\abs{(U_n)_B}^2\right)^{1/2}\\
    &\quad-
    \left(n^2\E\abs{(P_2W_{n,3}-U_n)_B}^2\right)^{1/2}\\
    &\geq cK^{-1/2}.
\end{align*}
Every pair mode in $B$ has product eigenvalue
$2\lambda_k\geq8\pi^2K^2$. Therefore,
\begin{align*}
    n^2\mathcal E_{n,3}
    &\geq
    n^2\sum_{i=1}^n
    \E\abs{\nabla_{X_i}(P_2W_{n,3})_B}^2\\
    &\geq
    8\pi^2K^2n^2\E\abs{(P_2W_{n,3})_B}^2\\
    &\gtrsim K
    =\varepsilon L.
\end{align*}
Since $\varepsilon$ is fixed, this proves
$\mathcal E_{n,3}\gtrsim n^{-5/3}$.

\paragraph{Lower bound for $d=4$.}
Let $L=n^{1/4}$. By Proposition \ref{prop:aggregate4}, there is a
constant $C_0$ such that
\begin{align*}
    n^2\E\abs{P_2W_{n,4}-U_{n,L}}^2\leq C_0.
\end{align*}
Fix $\varepsilon\in(0,1)$, to be chosen sufficiently small, and set
\begin{align*}
    B:=\{k\in\Z^4:\varepsilon L<\abs{k}\leq L\}.
\end{align*}
By orthogonality,
\begin{align*}
    n^2\E\abs{(P_2W_{n,4}-U_{n,L})_B}^2\leq C_0.
\end{align*}
Moreover, the variance formula for the Green statistic and
Lemma \ref{lem:lattice-band-sums} give, for fixed $\varepsilon$,
\begin{align*}
    n^2\E\abs{(U_{n,L})_B}^2
    &=
    2\frac{n-1}{n}
    \sum_{\varepsilon L<\abs{k}\leq L}\lambda_k^{-2}\\
    &=
    \frac{1}{4\pi^2}\log\frac1\varepsilon+o(1).
\end{align*}
Choose $\varepsilon$ so that
\begin{align*}
    \left(\frac{1}{4\pi^2}\log\frac1\varepsilon\right)^{1/2}
    >\sqrt{C_0}+2.
\end{align*}
Then, for sufficiently large $n$, the triangle inequality gives
\begin{align*}
    \left(n^2\E\abs{(P_2W_{n,4})_B}^2\right)^{1/2}
    &\geq
    \left(n^2\E\abs{(U_{n,L})_B}^2\right)^{1/2}\\
    &\quad-
    \left(n^2\E\abs{(P_2W_{n,4}-U_{n,L})_B}^2\right)^{1/2}\\
    &\geq1.
\end{align*}
Every pair mode in $B$ has product eigenvalue
$2\lambda_k\geq8\pi^2\varepsilon^2L^2$. Therefore,
\begin{align*}
    n^2\mathcal E_{n,4}
    &\geq
    n^2\sum_{i=1}^n
    \E\abs{\nabla_{X_i}(P_2W_{n,4})_B}^2\\
    &\geq
    8\pi^2\varepsilon^2L^2
    n^2\E\abs{(P_2W_{n,4})_B}^2\\
    &\geq8\pi^2\varepsilon^2L^2.
\end{align*}
Since $\varepsilon$ is fixed, this proves
$\mathcal E_{n,4}\gtrsim n^{-3/2}$.

Combining these lower bounds with the upper bounds proves the
claimed orders of $\mathcal E_{n,d}$. The variance asymptotics in
Theorems \ref{thm:main} and \ref{thm:main2} then give the stated
ratios and superconcentration.
\end{proof}

\nocite{Hoe48,Ser80}
\bibliographystyle{alphaurl}
\bibliography{references}

@article{AKT84,
  author = {Ajtai, M. and Koml{\'o}s, J. and Tusn{\'a}dy, G.},
  title = {On optimal matchings},
  journal = {Combinatorica},
  year = {1984},
  volume = {4},
  number = {4},
  pages = {259--264},
  doi = {10.1007/BF02579135},
  url = {https://doi.org/10.1007/BF02579135}
}

@article{AST19,
  author = {Ambrosio, Luigi and Stra, Federico and Trevisan, Dario},
  title = {A {PDE} approach to a 2-dimensional matching problem},
  journal = {Probability Theory and Related Fields},
  year = {2019},
  volume = {173},
  number = {1--2},
  pages = {433--477},
  doi = {10.1007/s00440-018-0837-x},
  url = {https://doi.org/10.1007/s00440-018-0837-x}
}

@article{BJGR19,
  author = {Bernton, Espen and Jacob, Pierre E. and Gerber, Mathieu and Robert, Christian P.},
  title = {On parameter estimation with the {Wasserstein} distance},
  journal = {Information and Inference: A Journal of the IMA},
  year = {2019},
  volume = {8},
  number = {4},
  pages = {657--676},
  doi = {10.1093/imaiai/iaz003},
  url = {https://doi.org/10.1093/imaiai/iaz003}
}

@article{BL21,
  author = {Bobkov, Sergey G. and Ledoux, Michel},
  title = {A simple {Fourier} analytic proof of the {AKT} optimal matching theorem},
  journal = {The Annals of Applied Probability},
  year = {2021},
  volume = {31},
  number = {6},
  pages = {2567--2584},
  doi = {10.1214/20-AAP1656},
  url = {https://doi.org/10.1214/20-AAP1656}
}

@article{CLPS14,
  author = {Caracciolo, Sergio and Lucibello, Carlo and Parisi, Giorgio and Sicuro, Gabriele},
  title = {Scaling hypothesis for the {Euclidean} bipartite matching problem},
  journal = {Physical Review E},
  year = {2014},
  volume = {90},
  number = {1},
  pages = {012118},
  doi = {10.1103/PhysRevE.90.012118},
  url = {https://doi.org/10.1103/PhysRevE.90.012118}
}

@book{Cha14,
  author = {Chatterjee, Sourav},
  title = {Superconcentration and Related Topics},
  series = {Springer Monographs in Mathematics},
  publisher = {Springer},
  address = {Cham},
  year = {2014},
  doi = {10.1007/978-3-319-03886-5},
  url = {https://doi.org/10.1007/978-3-319-03886-5}
}

@article{dBGU05,
  author = {del Barrio, Eustasio and Gin{\'e}, Evarist and Utzet, Frederic},
  title = {Asymptotics for {$L_2$} functionals of the empirical quantile process, with applications to tests of fit based on weighted {Wasserstein} distances},
  journal = {Bernoulli},
  year = {2005},
  volume = {11},
  number = {1},
  pages = {131--189},
  doi = {10.3150/bj/1110228245},
  url = {https://doi.org/10.3150/bj/1110228245}
}

@article{dBGL24,
  author = {del Barrio, Eustasio and Gonz{\'a}lez-Sanz, Alberto and Loubes, Jean-Michel},
  title = {Central limit theorems for general transportation costs},
  journal = {Annales de l'Institut Henri Poincar{\'e}, Probabilit{\'e}s et Statistiques},
  year = {2024},
  volume = {60},
  number = {2},
  pages = {847--873},
  doi = {10.1214/22-AIHP1356},
  url = {https://doi.org/10.1214/22-AIHP1356}
}

@misc{dBG26,
  author = {del Barrio, Eustasio and Gonz{\'a}lez-Sanz, Alberto and Loubes, Jean-Michel and Rodr{\'i}guez-V{\'i}tores, David},
  title = {Distributional limit theory for optimal transport},
  year = {2025},
  eprint = {2505.19104},
  archivePrefix = {arXiv},
  primaryClass = {math.ST},
  note = {arXiv:2505.19104v2, revised 13 July 2026},
  doi = {10.48550/arXiv.2505.19104},
  url = {https://arxiv.org/abs/2505.19104v2}
}

@article{dBL19,
  author = {del Barrio, Eustasio and Loubes, Jean-Michel},
  title = {Central limit theorems for empirical transportation cost in general dimension},
  journal = {The Annals of Probability},
  year = {2019},
  volume = {47},
  number = {2},
  pages = {926--951},
  doi = {10.1214/18-AOP1275},
  url = {https://doi.org/10.1214/18-AOP1275}
}

@article{dJ87,
  author = {de Jong, Peter},
  title = {A central limit theorem for generalized quadratic forms},
  journal = {Probability Theory and Related Fields},
  year = {1987},
  volume = {75},
  number = {2},
  pages = {261--277},
  doi = {10.1007/BF00354037},
  url = {https://doi.org/10.1007/BF00354037}
}

@article{GH22,
  author = {Goldman, Michael and Huesmann, Martin},
  title = {A fluctuation result for the displacement in the optimal matching problem},
  journal = {The Annals of Probability},
  year = {2022},
  volume = {50},
  number = {4},
  pages = {1446--1477},
  doi = {10.1214/21-AOP1562},
  url = {https://doi.org/10.1214/21-AOP1562}
}

@article{GT21,
  author = {Goldman, Michael and Trevisan, Dario},
  title = {Convergence of asymptotic costs for random {Euclidean} matching problems},
  journal = {Probability and Mathematical Physics},
  year = {2021},
  volume = {2},
  number = {2},
  pages = {341--362},
  doi = {10.2140/pmp.2021.2.341}
}

@article{GGCN23,
  author = {Gonz{\'a}lez-Delgado, Javier and Gonz{\'a}lez-Sanz, Alberto and Cort{\'e}s, Juan and Neuvial, Pierre},
  title = {Two-sample goodness-of-fit tests on the flat torus based on {Wasserstein} distance and their relevance to structural biology},
  journal = {Electronic Journal of Statistics},
  year = {2023},
  volume = {17},
  number = {1},
  pages = {1547--1586},
  doi = {10.1214/23-EJS2135}
}

@article{HMS21,
  author = {Hallin, Marc and Mordant, Gilles and Segers, Johan},
  title = {Multivariate goodness-of-fit tests based on {Wasserstein} distance},
  journal = {Electronic Journal of Statistics},
  year = {2021},
  volume = {15},
  number = {1},
  pages = {1328--1371},
  doi = {10.1214/21-EJS1816}
}

@article{Hoe48,
  author = {Hoeffding, Wassily},
  title = {A class of statistics with asymptotically normal distribution},
  journal = {The Annals of Mathematical Statistics},
  year = {1948},
  volume = {19},
  number = {3},
  pages = {293--325},
  doi = {10.1214/aoms/1177730196}
}

@unpublished{Led19,
  author = {Ledoux, Michel},
  title = {A fluctuation result in dual {Sobolev} norm for the optimal matching problem},
  year = {2019},
  note = {Unpublished manuscript},
  url = {https://perso.math.univ-toulouse.fr/ledoux/files/2022/03/matchingclt.pdf}
}

@article{McC01,
  author = {McCann, R. J.},
  title = {Polar factorization of maps on {Riemannian} manifolds},
  journal = {Geometric and Functional Analysis},
  year = {2001},
  volume = {11},
  number = {3},
  pages = {589--608},
  doi = {10.1007/PL00001679}
}

@book{Nathanson00,
  author = {Nathanson, Melvyn B.},
  title = {Elementary Methods in Number Theory},
  series = {Graduate Texts in Mathematics},
  volume = {195},
  publisher = {Springer},
  address = {New York},
  year = {2000},
  doi = {10.1007/b98870}
}

@article{PC19,
  author = {Peyr{\'e}, Gabriel and Cuturi, Marco},
  title = {Computational optimal transport: With applications to data science},
  journal = {Foundations and Trends in Machine Learning},
  year = {2019},
  volume = {11},
  number = {5--6},
  pages = {355--607},
  doi = {10.1561/2200000073}
}

@book{Ser80,
  author = {Serfling, Robert J.},
  title = {Approximation Theorems of Mathematical Statistics},
  publisher = {John Wiley \& Sons},
  address = {New York},
  year = {1980},
  doi = {10.1002/9780470316481}
}

@article{SM18,
  author = {Sommerfeld, Max and Munk, Axel},
  title = {Inference for empirical {Wasserstein} distances on finite spaces},
  journal = {Journal of the Royal Statistical Society: Series B (Statistical Methodology)},
  year = {2018},
  volume = {80},
  number = {1},
  pages = {219--238},
  doi = {10.1111/rssb.12236}
}

@book{Tal14,
  author = {Talagrand, Michel},
  title = {Upper and Lower Bounds for Stochastic Processes: Modern Methods and Classical Problems},
  series = {Ergebnisse der Mathematik und ihrer Grenzgebiete. 3. Folge},
  volume = {60},
  publisher = {Springer},
  address = {Berlin, Heidelberg},
  year = {2014},
  doi = {10.1007/978-3-642-54075-2}
}

@article{TGR24,
  author = {Torous, William and Gunsilius, Florian and Rigollet, Philippe},
  title = {An optimal transport approach to estimating causal effects via nonlinear difference-in-differences},
  journal = {Journal of Causal Inference},
  year = {2024},
  volume = {12},
  number = {1},
  pages = {20230004},
  doi = {10.1515/jci-2023-0004}
}

@book{Vil09,
  author = {Villani, C{\'e}dric},
  title = {Optimal Transport: Old and New},
  series = {Grundlehren der mathematischen Wissenschaften},
  volume = {338},
  publisher = {Springer},
  address = {Berlin, Heidelberg},
  year = {2009},
  doi = {10.1007/978-3-540-71050-9}
}

@article{Sho86,
  author = {Shor, Peter W.},
  title = {The average-case analysis of some on-line algorithms for bin packing},
  journal = {Combinatorica},
  volume = {6},
  number = {2},
  pages = {179--200},
  year = {1986},
  doi = {10.1007/BF02579171}
}

@article{LS89,
  author = {Leighton, T. and Shor, P.},
  title = {Tight bounds for minimax grid matching with applications to the average case analysis of algorithms},
  journal = {Combinatorica},
  volume = {9},
  number = {2},
  pages = {161--187},
  year = {1989},
  doi = {10.1007/BF02124678}
}

@article{SY91,
  author = {Shor, P. W. and Yukich, J. E.},
  title = {Minimax grid matching and empirical measures},
  journal = {The Annals of Probability},
  volume = {19},
  number = {3},
  pages = {1338--1348},
  year = {1991},
  doi = {10.1214/aop/1176990347}
}

@article{Tal92,
  author = {Talagrand, Michel},
  title = {Matching random samples in many dimensions},
  journal = {The Annals of Applied Probability},
  volume = {2},
  number = {4},
  pages = {846--856},
  year = {1992},
  doi = {10.1214/aoap/1177005578}
}

@article{TY93,
  author = {Talagrand, M. and Yukich, J. E.},
  title = {The integrability of the square exponential transportation cost},
  journal = {The Annals of Applied Probability},
  volume = {3},
  number = {4},
  pages = {1100--1111},
  year = {1993},
  doi = {10.1214/aoap/1177005274}
}

@article{Tal94a,
  author = {Talagrand, M.},
  title = {Matching theorems and empirical discrepancy computations using majorizing measures},
  journal = {Journal of the American Mathematical Society},
  volume = {7},
  number = {2},
  pages = {455--537},
  year = {1994},
  doi = {10.1090/S0894-0347-1994-1227476-X}
}

@article{Tal94b,
  author = {Talagrand, M.},
  title = {The transportation cost from the uniform measure to the empirical measure in dimension {$\geq 3$}},
  journal = {The Annals of Probability},
  volume = {22},
  number = {2},
  pages = {919--959},
  year = {1994},
  doi = {10.1214/aop/1176988735}
}

@article{DY95,
  author = {Dobri{\'c}, V. and Yukich, J. E.},
  title = {Asymptotics for transportation cost in high dimensions},
  journal = {Journal of Theoretical Probability},
  volume = {8},
  number = {1},
  pages = {97--118},
  year = {1995},
  doi = {10.1007/BF02213456}
}

@book{Yuk98,
  author = {Yukich, Joseph E.},
  title = {Probability Theory of Classical {Euclidean} Optimization Problems},
  series = {Lecture Notes in Mathematics},
  volume = {1675},
  publisher = {Springer-Verlag},
  address = {Berlin},
  year = {1998},
  isbn = {978-3-540-63666-3},
  doi = {10.1007/BFb0093472}
}

@article{AG19,
  author = {Ambrosio, Luigi and Glaudo, Federico},
  title = {Finer estimates on the {$2$}-dimensional matching problem},
  journal = {Journal de l'{\'E}cole polytechnique --- Math{\'e}matiques},
  volume = {6},
  pages = {737--765},
  year = {2019},
  doi = {10.5802/jep.105},
  url = {https://jep.centre-mersenne.org/articles/10.5802/jep.105/}
}

@article{AGT19,
  author = {Ambrosio, Luigi and Glaudo, Federico and Trevisan, Dario},
  title = {On the optimal map in the {$2$}-dimensional random matching problem},
  journal = {Discrete and Continuous Dynamical Systems},
  volume = {39},
  number = {12},
  pages = {7291--7308},
  year = {2019},
  doi = {10.3934/dcds.2019304},
  url = {https://www.aimsciences.org/article/doi/10.3934/dcds.2019304}
}

@article{GHO21,
  author = {Goldman, Michael and Huesmann, Martin and Otto, Felix},
  title = {Quantitative linearization results for the {Monge--Amp\`ere} equation},
  journal = {Communications on Pure and Applied Mathematics},
  volume = {74},
  number = {12},
  pages = {2483--2560},
  year = {2021},
  doi = {10.1002/cpa.21994},
  url = {https://onlinelibrary.wiley.com/doi/10.1002/cpa.21994}
}

@incollection{GHO25,
  author = {Goldman, Michael and Huesmann, Martin and Otto, Felix},
  title = {Almost sharp rates of convergence for the average cost and displacement in the optimal matching problem},
  editor = {Ehrnstr{\"o}m, Mats and Holden, Helge and Jakobsen, Espen R.},
  booktitle = {Partial Differential Equations: Waves, Nonlinearities and Nonlocalities: The Abel Symposium 2023},
  series = {Abel Symposia},
  volume = {18},
  pages = {93--109},
  publisher = {Springer},
  address = {Cham},
  year = {2025},
  doi = {10.1007/978-3-031-91282-5_5},
  url = {https://link.springer.com/chapter/10.1007/978-3-031-91282-5_5},
  eprint = {2312.07995},
  archivePrefix = {arXiv},
  primaryClass = {math.AP}
}

@misc{AGGT26,
  author = {Armegioiu, Victor and Goldman, Michael and Grotto, Francesco and Trevisan, Dario},
  title = {Exact asymptotics for the {2D Euclidean} random matching problem},
  year = {2026},
  eprint = {2609.08717},
  archivePrefix = {arXiv},
  primaryClass = {math.PR},
  note = {Preprint, version 1, 8 September 2026},
  url = {https://arxiv.org/abs/2609.08717}
}

@article{MBNW24,
  author  = {Manole, Tudor and Balakrishnan, Sivaraman
             and Niles-Weed, Jonathan and Wasserman, Larry},
  title   = {Plugin estimation of smooth optimal transport maps},
  journal = {The Annals of Statistics},
  year    = {2024},
  volume  = {52},
  number  = {3},
  pages   = {966--998},
  doi     = {10.1214/24-AOS2379}
}

\end{document}